\numberwithin{equation}{section}
\numberwithin{figure}{section}
\theoremstyle{plain}
\newtheorem{thm}{\protect\theoremname}
\theoremstyle{definition}
\newtheorem{defn}[thm]{\protect\definitionname}
\theoremstyle{remark}
\newtheorem{rem}[thm]{\protect\remarkname}
\theoremstyle{plain}
\newtheorem{lem}[thm]{\protect\lemmaname}
\newenvironment{proof}[1][\protect\proofname]{\par
\normalfont\topsep6\p@\@plus6\p@\relax
\trivlist
\itemindent\parindent
\item[\hskip\labelsep
\scshape
#1]\ignorespaces
}{%
\endtrivlist\@endpefalse
}
\providecommand{\proofname}{Proof}
\theoremstyle{plain}
\newtheorem{prop}[thm]{\protect\propositionname}
\providecommand{\definitionname}{Definition}
\providecommand{\lemmaname}{Lemma}
\providecommand{\propositionname}{Proposition}
\providecommand{\remarkname}{Remark}
\providecommand{\theoremname}{Theorem}
\numberwithin{equation}{section} 
\numberwithin{thm}{section}
\begin{document}

\title[Uniqueness of $F$ self-shrinkers with a tangent cone at infinity]{Uniqueness of self-shrinkers to the degree-one curvature flow with
a tangent cone at infinity}

\author{Siao-Hao Guo}
\begin{abstract}
Given a smooth, symmetric, homogeneous of degree one function $f=f\left(\lambda_{1},\cdots,\,\lambda_{n}\right)$
satisfying $\partial_{i}f>0$ for all $i=1,\cdots,\, n$, and an
oriented, properly embedded smooth cone $\mathcal{C}^n$ in $\mathbb{R}^{n+1}$,
we show that under some suitable conditions on
$f$ and the covariant derivatives of the
second fundamental form of $\mathcal{C}$, there is at most one $f$
self-shrinker (i.e. an oriented hypersurface $\Sigma^n$ in $\mathbb{R}^{n+1}$
for which $f\left(\kappa_{1},\cdots,\,\kappa_{n}\right)+\frac{1}{2}X\cdot N=0$
holds, where $X$ is the position vector, $N$ is the unit normal vector, and $\kappa_{1},\cdots,\,\kappa_{n}$ are principal curvatures
of $\Sigma$) that is asymptotic to the given cone $\mathcal{C}$
at infinity. 
\end{abstract}
\maketitle

\section{Introduction}

Let $\mathcal{C}^n$ be an oriented, properly embedded smooth cone
(excluding the vertex $O$) in $\mathbb{R}^{n+1}$. Suppose that $\boldsymbol{\Sigma}^n$
is an oriented, properly embedded smooth hypersurface in $\mathbb{R}^{n+1}$
which satisfies 
\[
H\,+\,\frac{1}{2}X\cdot N=0\quad\forall\, X\in\boldsymbol{\Sigma}
\]
\[
\varrho\boldsymbol{\Sigma}\overset{C_{\textrm{loc}}^{\infty}}{\longrightarrow}\mathcal{C}\quad\textrm{as}\;\varrho\searrow0
\]
where $X$ is the position vector, $N$ is the unit normal vector and $H=-\nabla_{\boldsymbol{\Sigma}}\cdot N$
is the mean curvature of $\boldsymbol{\Sigma}$. Then $\boldsymbol{\Sigma}$ is
called a self-shrinker to the mean curvature flow (MCF, an one-parameter family of hypersurfaces for which $\partial_{t}X_t^{\bot}=HN$ holds)
which is smoothly asymptotic to the cone $\mathcal{C}$ at infinity.
It follows that the rescaled family of hypersurfaces $\left\{ \boldsymbol{\Sigma}_{t}=\sqrt{-t}\,\boldsymbol{\Sigma}\right\} $
forms a mean curvature flow starting from $\boldsymbol{\Sigma}$ (when
$t=-1$) and converging locally smoothly to $\mathcal{C}$ as $t\nearrow0$.
Wang in \cite{W} proves the uniqueness of such self-shrinkers by
showing the following: suppose $\tilde{\boldsymbol{\Sigma}}$ is also a self-shrinker
which is asymptotic to the same cone $\mathcal{C}$ at infinity, then outside a large ball $B^{n+1}_{R}$,
each $\tilde{\boldsymbol{\Sigma}}_{t}=\sqrt{-t}\,\tilde{\boldsymbol{\Sigma}}$
can be regarded as a normal graph of $\mathit{\mathtt{h}}_{t}$ defined
on $\boldsymbol{\Sigma}_{t}\setminus\bar{B}_{R}$ for some $R>0$;
moreover, given $\varepsilon>0$ and choose $R$ sufficiently large,
there holds 
\[
\Big|\partial_{t}\mathit{\mathtt{h}}-\triangle_{\boldsymbol{\Sigma}_{t}}\mathtt{h\Big|}\leq\varepsilon\left(|\nabla_{\boldsymbol{\Sigma}_{t}}\mathtt{h}|\,+\,|\mathtt{h}|\right)
\]
\[
\mathtt{h}\Big|_{t=0}=0
\]
Then using the idea of \cite{ESS}, Wang derives a Carleman's inequality
for the heat operator on $\left\{ \boldsymbol{\Sigma}_{t}\right\} $,
applies it to the localization of $\mathtt{h}$, and then uses the unique
continuation principle (see \cite{EF}, for instance) to conclude
that $\mathtt{h}=0$. 

On the other hand, Andrews in \cite{A} consider the motion of hypersurfaces
in $\mathbb{R}^{n+1}$ moved by some degree one curvature. More precisely,
given a smooth, symmetric and homogeneous of degree-one function $f=f\left(\lambda_{1},\cdots,\,\lambda_{n}\right)$
which satisfies 
\[
\partial_{i}f>0,\quad\forall\: i=1,\cdots, n
\]
consider the following evolution of hypersurfaces:
\[
\partial_{t}X_t^{\bot}=f\left(\kappa_{1},\cdots,\,\kappa_{n}\right)N
\]
where $\kappa_{1},\cdots,\,\kappa_{n}$ are the principal curvatures
of the evolving hypersurface. For example, if we take the curvature
function to be $f\left(\lambda_{1},\cdots,\,\lambda_{n}\right)=\lambda_{1}+\cdots+\lambda_{n}$,
then it corresponds to the mean curvature flow. We call an oriented
$C^{2}$ hypersurface $\Sigma^n$ in $\mathbb{R}^{n+1}$to be a ``$f$
self-shrinker'' to the above ``$f$ curvature flow'' provided
that
\[
f\left(\kappa_{1},\cdots,\,\kappa_{n}\right)\,+\,\frac{1}{2}X\cdot N=0
\]
holds on $\Sigma$. Examples of $f$ self-shrinker can be found in \cite{G}. Just like the MCF, the rescaled family of ``$f$ self-shrinkers''
is a self-similar solution to the $f$ curvature flow; that is, the
rescaled family of hypersurfaces $\left\{ \Sigma_{t}=\sqrt{-t}\,\Sigma\right\} _{t<0}$
forms a ``$f$ curvature flow''. In the case when $\Sigma$ is smoothly asymptotic
to the cone $\mathcal{C}$ at infinity, the rescaled flow $\left\{ \Sigma_{t}\right\} _{t<0}$
converges locally smoothly to $\mathcal{C}$ as $t\nearrow0$.

This paper is an extension of the uniqueness result of \cite{W} to the class of $f$ self-shrinkers
with a tangent cone at infinity. Based on Wang's idea of proving the uniqueness, we need to have
some additional treatments to the nonlinearity of $f$ (which is not a concern in
Wang's case because the curvature function there is linear) in order to generalize the result.  
The crucial step is to derive Carleman's inequality for the associated parabolic
operator to the ``$f$ curvature flow" under some assumptions on the nonlinearity
of $f$, the uniform positivity of $\partial_{i}f$ and some
curvature bounds of $\mathcal{C}$ (see Proposition \ref{p26}). For this part, we are motivated
by the work of Nguyen in \cite{N} as well as Wu and Zhang in \cite{WZ}
for deriving Carleman's inequality for parabolic operator with variable
coefficients (see Remark \ref{r25}). 

In order to state our main results, Theorem \ref{t5}, we have to first introduce some notations and definitions
regarding the $f$ self-shrinkers, the tangent cone of a hypersurface
at infinity, and also some basic assumptions on the curvature
function $f$. We put all of these in Section 2.

In Section 3, we essentially follow the idea of \cite{W}: if $\Sigma^n$
and $\tilde{\Sigma}^n$ are $f$ self-shrinkers which are asymptotic
to the given cone $\mathcal{C}^n$ at infinity, then outside a large ball $B^{n+1}_R$,
$\tilde{\Sigma}_{t}=\sqrt{-t}\,\tilde{\Sigma}$ can be regarded
as a normal graph of $h_{t}$ defined on $\Sigma_{t}\setminus\bar{B}_{R}$ (see Lemma \ref{l8}),
which satisfies some parabolic equation and vanishes
at time 0 (see Proposition \ref{p14}). We also give some estimates on the coefficients of
the parabolic equation (see Proposition \ref{p15}). 

In Section 4, we follow the idea of \cite{ESS} for treating the backward
uniqueness of the heat equation (which is also used in \cite{W} to
deal with the uniqueness of self-shrinkers to the MCF)
to show that the function $h_{t}$ vanishes outside a large ball $B_R$.
We would first apply the mean value inequality for parabolic equations
and a local type of Carleman's inequalities to show the exponential
decay of $h_{t}$ to 0 as $t\nearrow0$ as in \cite{N} (see Proposition \ref{p22}). Then we are devoted to derive a global type of Carleman's inequalities
(based on the estimates of the coefficients of the parabolic equation
derived in Section 3, see Proposition \ref{p26}) and use it to show that $h_{t}$ vanishes
outside a ball $B_R$; in other words, the two shrinkers coincide outside a ball $B_R$. 
In the end, we use the unique continuation
principle to characterize the overlap region of $\Sigma$ and $\tilde{\Sigma}$.

\section*{Acknowledgement}
I am grateful to my advisor, Natasa Sesum, for proposing this problem and for her great support, as well as many helpful suggestions to this paper.

\vspace{.5in} 
\section{Assumptions and main results}
\begin{defn}[A regular cone]\label{d1}
Let $\mathcal{C}^n$ be an oriented and properly embedded smooth cone
(excluding the vertex $O$) in $\mathbb{R}^{n+1}$; that is, $\mathcal{C}$
is an oriented and properly embedded hypersurface in $\mathbb{R}^{n+1}$
satisfying $\varrho\,\mathcal{C}=\mathcal{C}$ for all $\varrho\in\mathbb{R}_{+}$
, and we assume that $O\notin\mathcal{C}$. 
\end{defn}
We then define what it means for a hypersurface to be asymptotic to
the cone $\mathcal{C}$ at infinity.
\begin{defn}[Tangent cone at infinity] \label{d2}
A $C^{k}$ hypersurface $\Sigma^n$ in $\mathbb{R}^{n+1}$ (with $k\in\mathbb{N}$)
is said to be $C^{k}$ asymptotic to $\mathcal{C}$ at infinity provided
that $\varrho\Sigma\overset{C_{\textrm{loc}}^{k}}{\longrightarrow}\mathcal{C}$
as $\varrho\searrow0$. In this case, $\mathcal{C}$
is called the tangent cone of $\Sigma$ at infinity.
\end{defn}
For a given $C^{2}$ oriented hypersurface $\Sigma^n$ in $\mathbb{R}^{n+1}$,
its Weingarten map $A^{\#}$ sends tangent vectors
to tangent vectors in such a way that
\[
A^{\#}\left(V\right)=-D_{V}N
\]
for any tangent vector field $V$ on $\Sigma$, where $N$ is the
unit-normal of $\Sigma$. The second fundamental form $A$ is defined
to be a 2 tensor on $\Sigma$ so that 
\[
A\left(V,\, W\right)=A^{\#}\left(V\right)\cdot W
\]
for any tangent vector fields $V$ and $W$ on $\Sigma$. The components
of $A^{\#}$ and $A$ with respect to a given local frame $\left\{ e_{1},\cdots,\, e_{n}\right\} $
of the tangent bundle of $\Sigma$ are defined by 
\[
A^{\#}\left(e_{i}\right)=A_{i}^{j}e_{j},\quad A\left(e_{i},e_{j}\right)=A_{ij}
\]
For simplicity, $A^{\#}$ and $A$ are usually denoted by their components:
$A^{\#}\sim A_{i}^{j}$ and $A\sim A_{ij}$. Note that $A^{\#}$ is
a self-adjoint operator with respect to the induced metric of the hypersurface (or equivalently, $A$ is a symmetric 2 tensor),
so $A^{\#}$ is diagonalizable. The eigenvectors of $A^{\#}$ are
called principal vectors and the associated eigenvalues are called principal curvatures,
which we denote by $\kappa_{1},\cdots,\,\kappa_{n}$. The mean curvature
is defined to be $H=\textrm{tr}\left(A^{\#}\right)=\kappa_{1}+\cdots+\kappa_{n}$,
which is a linear, symmetric and homogeneous of degree-one function
of the shape operator (or the principal curvatures). In this paper,
we consider a generic degree-one curvature.

\begin{defn}[The degree-one curvature function]\label{d3}
Let $F=F\left(S\right)$ be a conjugation-invariant, homogeneous of
degree-one function whose domain $\boldsymbol{\Omega}$ (in the space
of $n\times n$ matrices) containing a neighborhood of the set consisting
of all the values of shape operator $A_{\mathcal{C}}^{\#}$ of $\mathcal{C}$;
besides, $F$ can be written as a $C^{3}$ function composed with
the the elementary symmetric functions $\mathcal{E}_{1},\cdots,\,\mathcal{E}_{n}$
(for instance, $\mathcal{E}_{1}=\textrm{tr}$ and $\mathcal{E}_{n}=\textrm{det}$)
and $\frac{\partial F}{\partial S_{i}^{j}}>0$ (i.e. $\frac{\partial F}{\partial S_{i}^{j}}$
is a positive matrix). In particular, we require the curvature function $F$ to be defined 
and $C^{3}$ on the curvature of $\mathcal{C}$. 

Note that by the conjugation-invariant and homogeneous property of
$F$, we may assume that $\boldsymbol{\Omega}$ is closed under conjugation
and homothety; that is, if $S\in\boldsymbol{\Omega}$, then so are
$RSR^{-1}$ and $\varrho S$ for any invertible $n\times n$ matrix
$R$ and positive number $\varrho$.

Also, by the condition that $F$ can be written as a $C^{3}$ function
composed with the the elementary symmetric functions, it induces a
symmetric, homogeneous of degree-one function $f$ so that 
\[
F\left(S\right)=f\left(\lambda_{1},\cdots,\,\lambda_{n}\right)
\]
whenever $\lambda_{1},\cdots,\,\lambda_{n}$ are the eigenvalues of
the matrix $S$. The function $f$ is defined and $C^{3}$ on an open
set $\mho$ (in $\mathbb{R}^{n}$) containing a neighborhood of the
set consisting of all the values of the princinpal curvature vector
$\left(\kappa_{1}^{\mathcal{C}},\cdots,\,\kappa_{n}^{\mathcal{C}}\right)$
of $\mathcal{C}$. Likewise, we may assume that the domain $\mho$
is closed under permutation and homothety.

In fact, at a diagonal matrix $S=\textrm{diag}\left(\lambda_{1},\cdots,\,\lambda_{n}\right)$,
there holds (see \cite{A}):
\begin{equation}\label{1}
\frac{\partial F}{\partial S_{i}^{j}}\left(S\right)=\partial_{i}f\left(\lambda_{1},\cdots,\,\lambda_{n}\right)\delta_{ij}
\end{equation}
\begin{equation}
\frac{\partial^{2}F}{\partial S_{i}^{j}\partial S_{i}^{l}}\left(S\right)=\partial_{ii}^{2}f\left(\lambda_{1},\cdots,\,\lambda_{n}\right)\delta_{ij}\delta_{il}\label{2}
\end{equation}
\begin{equation}
\frac{\partial^{2}F}{\partial S_{i}^{j}\partial S_{k}^{l}}\left(S\right)=\partial_{ik}^{2}f\left(\lambda_{1},\cdots,\,\lambda_{n}\right)\delta_{ij}\delta_{kl}\,+\,\frac{\partial_{i}f-\partial_{k}f}{\lambda_{i}-\lambda_{k}}\delta_{il}\delta_{kj}\quad\textrm{if}\; i\neq k\label{3}
\end{equation}
Since $F$ is well-defined on conjugacy classes, (\ref{1}), (\ref{2}),
(\ref{3}) can be applied to any diagonalizable matrix in $\boldsymbol{\Omega}$.
For example, by (\ref{1}), we have
\[
\frac{\partial F}{\partial S_{i}^{j}}\left(A_{\mathcal{C}}^{\#}\right)\sim\partial_{i}f\left(\kappa_{1}^{\mathcal{C}},\cdots,\,\kappa_{n}^{\mathcal{C}}\right)\delta_{ij}
\]
where $A_{\mathcal{C}}^{\#}\sim\kappa_{\mathcal{C}}^{i}\delta_{ij}$
are the shape operator and principal curvatures of $\mathcal{C}$, respectively.
Besides, by the condition that $\frac{\partial F}{\partial S_{i}^{j}}>0$
on $\boldsymbol{\Omega}$, we may assume that $\partial_{i}f>0\quad\forall\, i=1,\cdots,\, n$
on $\mho$.

Let $U$ be an open neighborhood of the set consisting of the
all the shape operator $A_{\mathcal{C}}^{\#}$ of $\mathcal{C}$ at each
$X_{\mathcal{C}}\in\mathcal{C}\cap\left(B_{3}\setminus\bar{B}_{\frac{1}{3}}\right)$
in $\boldsymbol{\Omega}$. Note that we may assume that $U$ is closed
under conjugation and that $\frac{\partial F}{\partial S_{i}^{j}}$
is uniformly positive on $U$; that is, there exist a constant $\lambda\in(0,\,1]$
so that
\begin{equation}
\lambda\delta_{j}^{i}\,\leq\frac{\partial F}{\partial S_{i}^{j}}\leq\,\frac{1}{\lambda}\delta_{j}^{i}\label{4}
\end{equation}
Also, we have
\begin{equation}
\varkappa\,\equiv\sup_{X_{\mathcal{C}}\in\mathcal{C}\cap\left(B_{3}\setminus\bar{B}_{\frac{1}{3}}\right)}\Big|\nabla_{\mathcal{C}}\left(\frac{\partial F}{\partial S_{i}^{j}}\left(A_{\mathcal{C}}^{\#}\right)\right)\Big|\label{5}
\end{equation}
\[
=\sup_{X_{\mathcal{C}}\in\mathcal{C}\cap\left(B_{3}\setminus\bar{B}_{\frac{1}{3}}\right)}\Big|\sum_{k,\, l}\frac{\partial^{2}F}{\partial S_{i}^{j}\partial S_{k}^{l}}\left(A_{\mathcal{C}}^{\#}\right)\,\left(\nabla_{\mathcal{C}}A_{\mathcal{C}}^{\#}\right)_{k}^{l}\Big|\,
\]
\[
\leq\, C\left(n,\,\mathcal{C},\,\parallel F\parallel_{C^{2}\left(U\right)}\right)
\]
where $A_{\mathcal{C}}^{\#}$ and $\nabla_{\mathcal{C}}A_{\mathcal{C}}^{\#}$
are the shape operator of $\mathcal{C}$ and its covariant derivative
at $X_{\mathcal{C}}$, respectively; $B_{\varrho}=B^{n+1}_{\varrho}$ is the ball of
radius $\varrho$ in $\mathbb{R}^{n+1}$. We would give a more precise estimate
of $\varkappa$ in Section 5 (see(\ref{247})) in the case when $\mathcal{C}$ is
rotationally symmetric.
\end{defn}
Now let's define the $F$ self-shrinker (or $f$ self-shrinker): 
\begin{defn}[$F$ self-shrinker/ $f$ self-shrinker]\label{d4}

An oriented $C^{2}$ hypersurface $\Sigma^n$ 
in $\mathbb{R}^{n+1}$ is called a $F$ self-shrinker (or $f$ self-shrinker)
provided that $F$ is defined on the shape operator $A^{\#}$ of $\Sigma$
(i.e. $A^{\#}\in\boldsymbol{\Omega}$) and there holds
\[
F\left(A^{\#}\right)\,+\,\frac{1}{2}X\cdot N=0
\]
where $X$ is the position vector, $N$ is the unit-normal, and $A^{\#}$
is the shape operator of $\Sigma$; or equivalently, $f$ is defined
on the principal curvatures of $\Sigma$ (i.e. $\left(\kappa_{1},\cdots,\,\kappa_{n}\right)\in\mho$)
and there holds
\[
f\left(\kappa_{1},\cdots,\,\kappa_{n}\right)\,+\,\frac{1}{2}X\cdot N=0
\]
where $\kappa_{1},\cdots,\,\kappa_{n}$ are the principal curvatures
of $\Sigma$. 

Note that the rescaled family of $F$ self-shrinkers forms a self-similar
solution to the $F$ curvature flow. More precisely, the one-parameter
family $\left\{ \Sigma_{t}=\sqrt{-t}\,\Sigma\right\} _{-1\leqslant t<0}$
is a motion of a hypersurface moved by $F$ curvature vector. That
is, 
\[
\partial_{t}X_t^{\bot}=F\left(A^{\#}\right)N
\]
where $\partial_{t}X_t^{\bot}$ is the normal projection of $\partial_{t}X_t$.
Besides, for each time-slice $\Sigma_{t}=\sqrt{-t}\,\Sigma$, there
holds
\[
F\left(A^{\#}\right)\,+\,\frac{X\cdot N}{2(-t)}=0
\]

\end{defn}
We would prove the following uniqueness result for $F$ self-shrinkers
with a tangent cone in Section 4.
\begin{thm}[Uniqueness of self-shrinkers with a conical end]\label{t5}

Assume that $\varkappa\leq6^{-4}\lambda^{3}$ (see (\ref{4}), (\ref{5})).
Then for any properly embedded $F$ self-shrinkers $\Sigma^n$ and $\tilde{\Sigma}^n$
which are $C^{5}$ asymptotic to the cone $\mathcal{C}$ at infinity,
there exists \\ $R=R\left(\Sigma,\,\tilde{\Sigma},\mathcal{\, C},\, U,\,\parallel F\parallel_{C^{3}\left(U\right)},\,\lambda,\,\varkappa\right)\geq1$
so that $\Sigma\setminus B_{R}=\tilde{\Sigma}\setminus B_{R}$. \\Moreover,
let

\[
\Sigma^{0}=\left\{ X\in\Sigma\cap\tilde{\Sigma}\Big|\,\Sigma\textrm{ coincides with }\tilde{\Sigma}\textrm{ in a neighborhood of }X\right\} 
\]
then $\Sigma^{0}$ is a nonempty hypersurface, which satisfies $\partial\Sigma^{0}\subseteq\left(\partial\Sigma\,\cup\,\partial\tilde{\Sigma}\right)$. \end{thm}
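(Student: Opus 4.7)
The plan is to adapt the backward-uniqueness strategy of Wang \cite{W} for the MCF to the nonlinear $F$-flow; the nonlinearity of $F$ is controlled via the smallness assumption $\varkappa\leq 6^{-4}\lambda^{3}$.

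First I would form the rescaled $F$-flows $\Sigma_{t}=\sqrt{-t}\,\Sigma$ and $\tilde\Sigma_{t}=\sqrt{-t}\,\tilde\Sigma$ on $t\in[-1,0)$; both converge locally $C^{5}$ to $\mathcal{C}$ as $t\nearrow 0$ by hypothesis. Combining this convergence with a nearness argument produces $R\geq 1$ so that for every $t\in[-1,0)$ the portion of $\tilde\Sigma_{t}$ outside $B_{R}$ is a normal graph over $\Sigma_{t}\setminus\bar B_{R}$ of a function $h_{t}$ whose $C^{1}$ norm is small uniformly in $t$ (Lemma \ref{l8}).

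Subtracting the two flow equations $\partial_{t}X^{\bot}=F(A^{\#})N$ and integrating $\tfrac{\partial F}{\partial S^{j}_{i}}$ along the linear path between $A^{\#}_{\Sigma_{t}}$ and $A^{\#}_{\tilde\Sigma_{t}}$ gives a linear parabolic equation
\[
\partial_{t}h\;=\;a^{ij}(x,t)\,\nabla_{i}\nabla_{j}h\,+\,b^{i}(x,t)\,\nabla_{i}h\,+\,c(x,t)\,h,\qquad h\big|_{t=0}=0,
\]
whose principal symbol $a^{ij}$ tends to $\tfrac{\partial F}{\partial S^{j}_{i}}(A^{\#}_{\mathcal C})$ as $t\to 0$. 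Using (\ref{4}) and (\ref{5}) I would bound $a^{ij}$, $\nabla a^{ij}$, and the lower-order coefficients $b^{i},c$ uniformly in time (Propositions \ref{p14} and \ref{p15}). Following \cite{ESS, N}, a local Carleman inequality combined with a parabolic mean-value inequality would first give that $h_{t}$ decays exponentially fast in $L^{2}$ on each fixed annulus as $t\nearrow 0$ (Proposition \ref{p22}). A global Carleman inequality adapted to the anisotropic symbol $a^{ij}$ (Proposition \ref{p26}) would then force $h_{t}\equiv 0$ outside a possibly enlarged ball $B_{R}$; evaluating at $t=-1$ yields $\Sigma\setminus B_{R}=\tilde\Sigma\setminus B_{R}$.

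Finally, the set $\Sigma^{0}$ is nonempty since it contains $\Sigma\setminus B_{R}$; it is open in the interior of $\Sigma\cap\tilde\Sigma$ by definition, and relatively closed there by the unique continuation principle applied to the elliptic self-shrinker equation $F(A^{\#})+\tfrac{1}{2}X\cdot N=0$, so that $\partial\Sigma^{0}\subseteq\partial\Sigma\cup\partial\tilde\Sigma$. The main obstacle I anticipate is the global Carleman inequality: in Wang's MCF setting the principal symbol is essentially the Laplacian, but here the anisotropy from $\tfrac{\partial F}{\partial S^{j}_{i}}$ and its covariant derivatives produces error terms of size $\varkappa$ in the Carleman identity that must be absorbed by the good terms generated by the weight, and it is precisely the explicit smallness $\varkappa\leq 6^{-4}\lambda^{3}$ that enables this absorption.
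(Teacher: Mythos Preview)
Your proposal is correct and follows essentially the same route as the paper: writing $\tilde\Sigma_{t}$ as a normal graph $h_{t}$ over $\Sigma_{t}$, deriving the linear parabolic equation for $h$ with coefficients obtained by integrating $\partial F/\partial S$ along the linear path (Propositions~\ref{p14}--\ref{p15}), proving exponential decay via local Carleman and mean-value inequalities (Proposition~\ref{p22}), then using the global Carleman inequality (Proposition~\ref{p26}) to force $h\equiv 0$ outside a large ball, and finally invoking elliptic unique continuation for the difference of the two graphs to handle $\Sigma^{0}$ (Theorems~\ref{t27}--\ref{t28}). You also correctly identify that the condition $\varkappa\le 6^{-4}\lambda^{3}$ is exactly what allows the anisotropic error terms in the global Carleman identity (Lemma~\ref{l24}) to be absorbed.
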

\begin{rem}
In the case of \cite{W}, $F=\mathcal{E}_{1}$ (or equivalently,
$f\left(\lambda_{1},\cdots,\,\lambda_{n}\right)=\lambda_{1}+\cdots+\lambda_{n}$)
is a linear function, so (by (\ref{5}), (\ref{2}), (\ref{3})) $\varkappa\equiv0$
and the hypothesis of Theorem \ref{t5} is trivially satisfied. On the other
hand, consider
\[
F=\mathcal{E}_{1}\,\pm\,\epsilon\frac{\mathcal{E}_{n}}{\mathcal{E}_{n-1}}
\]
or equivalently, 
\[
f\left(\lambda_{1},\cdots,\,\lambda_{n}\right)=\left(\lambda_{1}+\cdots+\lambda_{n}\right)\,\pm\,\epsilon\frac{\prod_{i=1}^{n}\lambda_{i}}{\sum_{i=1}^{n}\left(\prod_{j\neq i}\lambda_{j}\right)}
\]
and take $\mathcal{C}$ to be a rotationally symmetric cone. Then
by Theorem \ref{t5} and (\ref{247}) in the Section 5, the uniqueness
holds when $0<\epsilon\ll1$. 
\end{rem}

\vspace{.5in}
\section{Deviation between two $F$ self-shrinkers with the same asymptotic
behavior at infinity}

Let $\Sigma^n$ be a properly embedded $F$ self-shrinker (in Definition
\ref{d4}) which is $C^{5}$ asymptotic to the cone $\mathcal{C}$ at infinity. 

By Definition \ref{d2}, $\varrho\Sigma$ can be arbitrary $C^{5}$ close
to $\mathcal{C}$ on any fixed bounded set of $\mathbb{R}^{n+1}$
which is away from the origin (e.g. on $B_{2}\setminus\bar{B}_{\frac{1}{2}}$)
as long as $\varrho$ is sufficiently small. Below we would like to use this condition to show that any ``rescaled'' $C^{5}$
quantities of $\Sigma\setminus\bar{B}_{R}$ can estimated by that
of $\mathcal{C}$ (if $R$ is sufficiently large). 

First, choose $R\gg1$ (depending on $\Sigma$, $\mathcal{C}$)
so that outside a compact set, $\Sigma$ is a normal graph over
$\mathcal{C}\setminus\bar{B}_{R}$, say $X=\boldsymbol{\Psi}\left(X_{\mathcal{C}}\right)=X_{\mathcal{C}}+\boldsymbol{\psi}N_{\mathcal{C}}$,
where $X_{\mathcal{C}}$ is the position vector of $\mathcal{C}$,
 $N_{\mathcal{C}}$ is the unit-normal of $\mathcal{C}$ at $X_{\mathcal{C}}$ and 
$\boldsymbol{\psi}$ is a real-valued function of $X_{\mathcal{C}}$.
Consequently, it's natural to define the ''normal projecton'' $\Pi$ to
be the inverse map of $\boldsymbol{\Psi}$, which sends $X\in\Sigma$
to $X_{\mathcal{C}}\in\mathcal{C}$. Also, by the rescaling argument,
we may assume that 
\[
\mathcal{H}^{n}\left(\Sigma\cap\left(B_{2\mathrm{r}}\setminus\bar{B}_{\mathrm{r}}\right)\right)\leq C\left(n,\,\mathcal{C}\right)\mathrm{r}^{n}
\]
for all $\mathrm{r}\geq R$ (i.e. $\Sigma$ has polynomial volume
growth).

On the other hand, for each fixed $\hat{X}_{\mathcal{C}}\in\mathcal{C}\setminus\bar{B}_{R}$, we have $|\hat{X}_{\mathcal{C}}|{}^{-1}\hat{X}_{\mathcal{C}}\in\mathcal{C}$. 
So near $|\hat{X}_{\mathcal{C}}|{}^{-1}\hat{X}_{\mathcal{C}}$,
 $\mathcal{C}$ is locally a graph
over its tangent hyperplane at $|\hat{X}_{\mathcal{C}}|{}^{-1}\hat{X}_{\mathcal{C}}$.
By Definition \ref{d2}, $|\hat{X}_{\mathcal{C}}|{}^{-1}\Sigma$ is $C^{5}$ close
to $\mathcal{C}$, so it must also
be a local graph over $T_{|\hat{X}_{\mathcal{C}}|^{-1}\hat{X}_{\mathcal{C}}}\mathcal{C}$,
and the graph must be $C^{5}$ close to the corresponding graph of $\mathcal{C}$.

Furthermore, by \cite{L}, there exits a uniform radius $\rho\in(0,\,\frac{1}{8}]$
(depending on the dimension $n$, the volume of $\mathcal{C}\cap\left(B_{3}\setminus\bar{B}_{\frac{1}{3}}\right)$ and the $C^{3}$ bound
of the curvature of $\mathcal{C}\cap\left(B_{3}\setminus\bar{B}_{\frac{1}{3}}\right)$)
so that near $|\hat{X}_{\mathcal{C}}|^{-1}\hat{X}_{\mathcal{C}}$,
the graph of $\mathcal{C}$ and
the graph of $|\hat{X}_{\mathcal{C}}|^{-1}\Sigma$ are defined on $B_{\rho}^{n} \subset T_{|\hat{X}_{\mathcal{C}}|^{-1}\hat{X}_{\mathcal{C}}}\mathcal{C}$.
We may also assume that the $C^{1}$ norm of the local graph of $\mathcal{C}$ on $B_{\rho}^{n} \subset T_{|\hat{X}_{\mathcal{C}}|^{-1}\hat{X}_{\mathcal{C}}}\mathcal{C}$ is small (by choosing $\rho$ small).

After undoing the rescaling (from $|\hat{X}_{\mathcal{C}}|{}^{-1}\Sigma$ to $\Sigma$), the above translates into the following: there exists
$R=R\left(\Sigma,\mathcal{\, C}\right)\geq1$ so that near each $\hat{X}_{\mathcal{C}}\in\mathcal{C}\setminus\bar{B}_{R}$,
$\mathcal{C}$ and $\Sigma$ can be repectively parametrized by
\[
X_{\mathcal{C}}=X_{\mathcal{C}}\left(x\right)\equiv\hat{X}_{\mathcal{C}}+\left(x,\,\mathtt{w}\left(x\right)\right)
\]
 
\[
X=X\left(x\right)\equiv\hat{X}_{\mathcal{C}}+\left(x,\,\mathtt{u}\left(x\right)\right)
\]
for $x=\left(x_{1},\cdots,\, x_{n}\right)\in B^n_{\rho|\hat{X}_{\mathcal{C}}|} \subset T_{\hat{X}_{\mathcal{C}}}\mathcal{C}$,
such that $\mathtt{w}\left(0\right)=0$, $\partial_{x}\mathtt{w}\left(0\right)=0$
and 
\begin{equation}
|\hat{X}_{\mathcal{C}}|^{-1}\parallel\mathtt{w}\parallel_{L^{\infty}(B_{\rho|\hat{X}_{\mathcal{C}}|}^{n})}+\parallel\partial_{x}\mathtt{w}\parallel_{L^{\infty}(B_{\rho\mid\hat{X}_{\mathcal{C}}\mid}^{n})}\,\leq\frac{1}{16}\label{6}
\end{equation}
\begin{equation}
|\hat{X}_{\mathcal{C}}|\parallel\partial_{x}^{2}\mathtt{w}\parallel_{L^{\infty}(B_{\rho|\hat{X}_{\mathcal{C}}|}^{n})}+\cdots+|\hat{X}_{\mathcal{C}}|{}^{4}\parallel\partial_{x}^{5}\mathtt{w}\parallel_{L^{\infty}(B_{\rho|\hat{X}_{\mathcal{C}}|}^{n})}\,\leq C\left(n,\,\mathcal{C}\right)\label{7}
\end{equation}

\begin{equation}
\{|\hat{X}_{\mathcal{C}}|^{-1}\parallel\mathtt{u}-\mathtt{w}\parallel_{L^{\infty}(B_{\rho|\hat{X}_{\mathcal{C}}|}^{n})}+\parallel\partial_{x}\mathtt{u}-\partial_{x}\mathtt{w}\parallel_{L^{\infty}(B_{\rho|\hat{X}_{\mathcal{C}}|}^{n})}\label{8}
\end{equation}

\[
+|\hat{X}_{\mathcal{C}}|\parallel\partial_{x}^{2}\mathtt{u}-\partial_{x}^{2}\mathtt{w}\parallel_{L^{\infty}(B_{\rho|\hat{X}_{\mathcal{C}}|}^{n})}+\cdots
\]

\[
+\,|\hat{X}_{\mathcal{C}}|{}^{4}\parallel\partial_{x}^{5}\mathtt{u}-\partial_{x}^{5}\mathtt{w}\parallel_{L^{\infty}(B_{\rho|\hat{X}_{\mathcal{C}}|}^{n})}\}\,\,\leq\frac{1}{16}
\]
where we assume the unit-normal of $\mathcal{C}$ at $\hat{X}_{\mathcal{C}}$
to be $\left(0,\,1\right)$ for ease of notation (and hence $\Pi\left(X\left(0\right)\right)=\hat{X}_{\mathcal{C}}$).
Note that (\ref{6}) is the rescale of the smallness of the $C^{1}$
norm of the local graph of $\mathcal{C}$, and (\ref{8}) is the
rescale of the small $C^{5}$ difference between the local graphs
of $\mathcal{C}$ and $|\hat{X}_{\mathcal{C}}|^{-1}\Sigma$. 

By Definition \ref{d2} and the rescaling argument, the same thing holds for
each rescaled hypersurface $\Sigma_{t}=\sqrt{-t}\,\Sigma$, $t\in[-1,\,0)$
as well. That is, outside a compact set, $\Sigma_{t}$ is a normal
graph over $\mathcal{C}\setminus\bar{B}_{R}$ (with $R\gg1$ depending
on $\Sigma$, $\mathcal{C}$); besides, near each $\hat{X}_{\mathcal{C}}\in\mathcal{C}\setminus\bar{B}_{R}$,
$\Sigma_{t}$ is a graph over $T_{|\hat{X}_{\mathcal{C}}|^{-1}\hat{X}_{\mathcal{C}}}\mathcal{C}$
and it can be parametrized by 
\[
X_{t}\left(x\right)=X\left(x,\, t\right)\equiv\hat{X}_{\mathcal{C}}+\left(x,\,\mathtt{u}_{t}\left(x\right)\right)=\hat{X}_{\mathcal{C}}+\left(x,\,\mathtt{u}\left(x,\, t\right)\right)
\]
which satisfies 

\begin{equation}
\{|\hat{X}_{\mathcal{C}}|^{-1}\parallel\mathtt{u}\left(\cdot,\, t\right)-w\parallel_{L^{\infty}(B_{\rho|\hat{X}_{\mathcal{C}}|}^{n})}+\parallel\partial_{x}\mathtt{u}\left(\cdot,\, t\right)-\partial_{x}\mathtt{w}\parallel_{L^{\infty}(B_{\rho|\hat{X}_{\mathcal{C}}|}^{n})}\label{9}
\end{equation}
\[
+\,|\hat{X}_{\mathcal{C}}|\parallel\partial_{x}^{2}\mathtt{u}\left(\cdot,\, t\right)-\partial_{x}^{2}\mathtt{w}\parallel_{L^{\infty}(B_{\rho|\hat{X}_{\mathcal{C}}|}^{n})}+\cdots
\]

\[
+\,|\hat{X}_{\mathcal{C}}|^{4}\parallel\partial_{x}^{5}\mathtt{u}\left(\cdot,\, t\right)-\partial_{x}^{5}\mathtt{w}\parallel_{L^{\infty}(B_{\rho|\hat{X}_{\mathcal{C}}|}^{n})}\}\,\,\leq\frac{1}{16}
\]
We call $t\mapsto X(x,\, t)=\hat{X}_{\mathcal{C}}+\left(x,\,\mathtt{u}\left(x,\, t\right)\right)$
to be the ``vertical parametrization'' of the flow $\left\{ \Sigma_{t}\right\} _{-1\leq t<0}$.
By (\ref{6}), (\ref{9}) and $0<\rho\leq\frac{1}{8}$,
we have 
\[
\frac{3}{4}|\hat{X}_{\mathcal{C}}|\,\leq\,|X\left(x,\, t\right)|\,=\,|\hat{X}_{\mathcal{C}}+\left(x,\,\mathtt{u}\left(x,\, t\right)\right)|\,\leq\,\frac{5}{4}|\hat{X}_{\mathcal{C}}|
\]
for $x\in B_{\rho\mid\hat{X}_{\mathcal{C}}\mid}^{n}\subset T_{\hat{X}_{\mathcal{C}}}\mathcal{C}$, $t\in[-1,\,0)$;
that is, $|X|$ is comparable with $|\hat{X}_{\mathcal{C}}|$. Note that
we still have the polynomial volume growth for each $\Sigma_{t}$:
\begin{equation}
\mathcal{H}^{n}\left(\Sigma_{t}\cap\left(B_{2\mathrm{r}}\setminus\bar{B}_{\mathrm{r}}\right)\right)\leq C\left(n,\,\mathcal{C}\right)\mathrm{r}^{n}\label{10}
\end{equation}
for all $\mathrm{r}\geq R$.

On the other hand, we could use the $F$ self-shrinker condition to improve (\ref{9}). 
To see this, observe that under the conditions
of being a $F$ self-shrinker and having a tangent cone $\mathcal{C}$
at infinity, the rescaled flow $\left\{ \Sigma_{t}=\sqrt{-t}\,\Sigma\right\} _{-1\leq t<0}$
moves by $F$ curvature vector and converges (in the locally $C^{5}$
sense) to the cone $\mathcal{C}$ as $t\nearrow0$. In other words,
we could define a $F$ curvature flow $\left\{ \Sigma_{t}\right\} _{-1\leq t\leq0}$
with $\Sigma_{t}=\sqrt{-t}\,\Sigma$ for $t\in[-1,\,0)$ and $\Sigma_{0}=\mathcal{C}$,
which is continuous upto $t=0$ (in the locally $C^{5}$ sense). Besides,
near each $\hat{X}_{\mathcal{C}}\in\mathcal{C}\setminus\bar{B}_{R}$
(with $R\gg1$ depending on $\Sigma$, $\mathcal{C}$), we have the
vertical parametrization of the flow (as above) for $t\in\left[-1,\,0\right]$.
by Definition \ref{d4}, the evolution of $u_{t}$ satisfies
\begin{equation}
\partial_{t}\mathtt{u}=\sqrt{1+|\partial_{x}\mathtt{u}|{}^{2}}\, F\left(A_{i}^{j}\left(x,\, t\right)\right)\label{11}
\end{equation}
for $\left(x_{1},\cdots,\, x_{n}\right)\in B_{\rho|\hat{X}_{\mathcal{C}}|}^{n}\subset T_{\hat{X}_{\mathcal{C}}}\mathcal{C}$, $-1\leq t<0$, and
\begin{equation}
\mathtt{u}\left(\cdot,\, t\right)\overset{C^{5}}{\longrightarrow}w\quad\textrm{on}\; B_{\rho\hat{|X}_{\mathcal{C}}|}^{n}\quad\textrm{as}\; t\nearrow0\label{12}
\end{equation}
where the shape operator $A_{t}^{\#}\left(x\right)\sim A_{i}^{j}\left(x,\, t\right)$
of $\Sigma_{t}$ (with respect to the local coordinate frame $\left\{ \partial_{1}X_{t},\cdots,\,\partial_{n}X_{t}\right\} $)
is equal to 
\begin{equation}
A_{i}^{j}\left(x,\, t\right)=\partial_{i}\left(\frac{\partial_{j}\mathtt{u}\left(x,\, t\right)}{\sqrt{1+|\partial_{x}\mathtt{u}|{}^{2}}}\right)\label{13}
\end{equation}\\
It follows (by (\ref{11}), (\ref{9}), (\ref{6}), (\ref{7}), (\ref{13})) that 
\[
|\partial_{t}\mathtt{u}|\,=\,|\hat{X}_{\mathcal{C}}|{}^{-1}\sqrt{1+|\partial_{x}\mathtt{u}|{}^{2}}\,\Big|F\left(|\hat{X}_{\mathcal{C}}|A_{i}^{j}\left(x,\, t\right)\right)\Big|
\]
\[
\leq\,|\hat{X}_{\mathcal{C}}|^{-1}\left(1+\parallel\partial_{x}\mathtt{u}_{t}\parallel_{L^{\infty}(B_{\rho\hat{|X}_{\mathcal{C}}|}^{n})}\right)\parallel F\parallel_{L^{\infty}(U)}
\]
in which we use the homogeneity of $F$. \\
Similarly, by differentiating
(\ref{11}) and using the homogeneity of $F$ (and its derivatives), we get

\begin{equation}
\{ |\hat{X}_{\mathcal{C}}|\parallel\partial_{t}\mathtt{u}\left(\cdot,\, t\right)\parallel_{L^{\infty}(B_{\rho\hat{|X}_{\mathcal{C}}|}^{n})}\,+\,|\hat{X}_{\mathcal{C}}|^{2}\parallel\partial_{t}\partial_{x}\mathtt{u}\left(\cdot,\, t\right)\parallel_{L^{\infty}(B_{\rho\hat{|X}_{\mathcal{C}}|}^{n})}\label{14}
\end{equation}
\[
+\,|\hat{X}_{\mathcal{C}}|^{3}\parallel\partial_{t}\partial_{x}^{2}\mathtt{u}\left(\cdot,\, t\right)\parallel_{L^{\infty}(B_{\rho\hat{|X}_{\mathcal{C}}|}^{n})}
\]
\[
+\,|\hat{X}_{\mathcal{C}}|^{4}\parallel\partial_{t}\partial_{x}^{3}\mathtt{u}\left(\cdot,\, t\right)\parallel_{L^{\infty}(B_{\rho\hat{|X}_{\mathcal{C}}|}^{n})}\} \,\leq C\left(n,\,\mathcal{C},\,\parallel F\parallel_{C^{3}\left(U\right)}\right)
\]
which implies (by (\ref{14}) and (\ref{11})) 
\[
|\mathtt{u}\left(\cdot,\, t\right)-w|\,\leq\int_{t}^{0}|\partial_{t}\mathtt{u}\left(\cdot,\,\tau\right)|\,d\tau\,\,\leq C\left(n,\,\mathcal{C},\,\parallel F\parallel_{C^{3}\left(U\right)}\right)\,|\hat{X}_{\mathcal{C}}|^{-1}\left(-t\right)
\]
Likewise, integrate the estimates for derivatives in (\ref{14}) to
get: $\forall\, t\in\left[-1,\,0\right]$

\begin{equation}
\{|\hat{X}_{\mathcal{C}}|\parallel\mathtt{u}\left(\cdot,\, t\right)-\mathtt{w}\parallel_{L^{\infty}(B_{\rho\hat{|X}_{\mathcal{C}}|}^{n})}+\,|\hat{X}_{\mathcal{C}}|^{2}\parallel\partial_{x}\mathtt{u}\left(\cdot,\, t\right)-\partial_{x}\mathtt{w}\parallel_{L^{\infty}(B_{\rho\hat{|X}_{\mathcal{C}}|}^{n})}\label{15}
\end{equation}
\[
+\,|\hat{X}_{\mathcal{C}}|^{3}\parallel\partial_{x}^{2}\mathtt{u}\left(\cdot,\, t\right)-\partial_{x}^{2}\mathtt{w}\parallel_{L^{\infty}(B_{\rho\hat{|X}_{\mathcal{C}}|}^{n})}
\]
\[
+\,|\hat{X}_{\mathcal{C}}|^{4}\parallel\partial_{x}^{3}\mathtt{u}\left(\cdot,\, t\right)-\partial_{x}^{3}\mathtt{w}\parallel_{L^{\infty}(B_{\rho\hat{|X}_{\mathcal{C}}|}^{n})}\}\,\,\leq C\left(n,\,\mathcal{C},\,\parallel F\parallel_{C^{3}\left(U\right)}\right)\left(-t\right)
\]
which is the improvement of (\ref{9}) by using the equation (\ref{11}). 

In view of the pull-back metric 
$$
g_{ij}\left(x,\, t\right)=\delta_{ij}+\partial_{i}u\left(x,\, t\right)\,\partial_{j}u\left(x,\, t\right)
$$
and the associated Christoffel symbols 
\begin{equation}
\Gamma_{ij}^{k}\left(x,\, t\right)=\frac{\partial_{k}\mathtt{u}\left(x,\, t\right)\,\partial_{ij}^{2}\mathtt{u}\left(x,\, t\right)}{1+|\partial_{x}\mathtt{u}\left(x,\, t\right)|{}^{2}}\label{16}
\end{equation}
together with (\ref{13}), (\ref{15}), the comparability of $|X|$
and $|\hat{X}_{\mathcal{C}}|$, (\ref{4}), (\ref{5}) and the continuity
and homogeneity of $F$ (and its derivatives), there exits $R\geq1$
(depending on $\Sigma,\mathcal{\, C},\, U,\,\parallel F\parallel_{C^{3}\left(U\right)},\,\lambda,\,\varkappa$)
such that for $X_{t}\in\Sigma_{t}\setminus\bar{B}_{R}$, the following
hold:
\begin{equation}
|X_{t}|\, A_{t}^{\#}\in U\label{17}
\end{equation}

\begin{equation}
\frac{\lambda}{2}\delta_{j}^{i}\,\leq\frac{\partial F}{\partial S_{i}^{j}}\left(A_{t}^{\#}\right)=\frac{\partial F}{\partial S_{i}^{j}}\left(|X_{t}|\, A_{t}^{\#}\right)\leq\,\frac{2}{\lambda}\delta_{j}^{i}\label{18}
\end{equation}

\noindent \resizebox{1.0\linewidth}{!}{
 \begin{minipage}{\linewidth}
  \begin{align}
|X_{t}|\Big|\sum_{k,\, l}\frac{\partial^{2}F}{\partial S_{i}^{j}\partial S_{k}^{l}}\left(A_{t}^{\#}\right)\,\left(\nabla_{\Sigma_{t}}A_{t}^{\#}\right)_{k}^{l}\Big|\, 
=\,\Big|\sum_{k,\, l}\frac{\partial^{2}F}{\partial S_{i}^{j}\partial S_{k}^{l}}\left(|X_{t}|A_{t}^{\#})\right)\cdot\left(|X_{t}|^{2}\nabla_{\Sigma_{t}}A_{t}^{\#}\right)_{k}^{l}\Big|\, 
\leq\,2\varkappa  \label{19}
\end{align}
  \end{minipage}
}

\begin{equation}
|X_{t}|\,|A_{t}^{\#}|\,+\,|X_{t}|^{2}|\nabla_{\Sigma_{t}}A_{t}^{\#}|\,+\,|X_{t}|^{3}\mid|\nabla_{\Sigma_{t}}^{2}A_{t}^{\#}|\,\leq C\left(n,\,\mathcal{C}\right)\label{20}
\end{equation}
where $A_{t}^{\#}$ is the shape operator of $\Sigma_{t}$ at $X_{t}$
and $\nabla_{\Sigma_{t}}A_{t}^{\#}$ is the covariant derivative of
$A_{t}^{\#}$ (with respect to $\Sigma_{t}$). Note that $F$ is
homogeneous of degree $1$, $\frac{\partial F}{\partial S_{i}^{j}}$
is of degree $0$ and $\frac{\partial^{2}F}{\partial S_{i}^{j}\partial S_{k}^{l}}$
is of degree $-1$.

Now let $\tilde{\Sigma}^n$ to be any other $F$ self-shrinker which is also
$C^{5}$ asymptotic to $\mathcal{C}$ at infinity. By the same limiting
behavior, $\tilde{\Sigma}$ is $C^{5}$ close to $\Sigma$ (in the
blow-down sense), and hence it can be regarded as a normal
graph of a function $h$ over $\Sigma$ outside a large ball $B^{n+1}_R$. Later we would derive
an elliptic equation which is satisfied by $h$. To this end, we need
the following two lemmas (Lemma \ref{l8} \& Lemma  \ref{l10}). The first one gives
the decay rate of the function $h$ and the difference of the shape
operators between $\Sigma$ and $\tilde{\Sigma}$ as $|X|\nearrow\infty$;
in the second lemma, we estimate the coefficients of the differential
equation to be satisfied by $h$. 
\begin{lem}\label{l8}
There exits $R=R\left(\Sigma,\,\tilde{\Sigma},\, n,\,\mathcal{C},\,\parallel F\parallel_{C^{3}\left(U\right)}\right)\geq1$
so that outside a compact set, $\tilde{\Sigma}$ is a normal graph
over $\Sigma\setminus\bar{B}_{R}$ and can be parametrized as 
\[
\tilde{X}=X+hN\,\,\,\textrm{for}\,\, X\in\Sigma\setminus\bar{B}_{R}
\]
where $N$ is the unit-normal of $\Sigma$ and $h$ is the
deviation of $\tilde{\Sigma}$ from $\Sigma$. Besides, there hold

\noindent \resizebox{1.0\linewidth}{!}{
  \begin{minipage}{\linewidth}
  \begin{align}
\parallel|X|\, h\parallel_{L^{\infty}(\Sigma\setminus\bar{B}_{R})}+\parallel|X|{}^{2}\nabla_{\Sigma}h\parallel_{L^{\infty}(\Sigma\setminus\bar{B}_{R})}+\parallel|X|^{3}\nabla_{\Sigma}^{2}h\parallel_{L^{\infty}(\Sigma\setminus\bar{B}_{R})}\,\leq C\left(n,\,\mathcal{C},\,\parallel F\parallel_{C^{3}\left(U\right)}\right)\label{21}
\end{align}
  \end{minipage}
}

\noindent \resizebox{1.0\linewidth}{!}{
  \begin{minipage}{\linewidth}
  \begin{align}
\parallel|X|{}^{3}\left(\tilde{A}^{\#}-A^{\#}\right)\parallel_{L^{\infty}(\Sigma\setminus\bar{B}_{R})}+\parallel|X|^{4}\left(\nabla_{\Sigma}\tilde{A}^{\#}-\nabla_{\Sigma}A^{\#}\right)\parallel_{L^{\infty}(\Sigma\setminus\bar{B}_{R})}\,\leq C\left(n,\,\mathcal{C},\,\parallel F\parallel_{C^{3}\left(U\right)}\right)\label{22}
\end{align}
  \end{minipage}
}

\begin{equation}
\parallel|X|^{3}\nabla_{\Sigma}^{2}\tilde{A}^{\#}\parallel_{L^{\infty}(\Sigma\setminus\bar{B}_{R})}\,\leq C\left(n,\,\mathcal{C},\,\parallel F\parallel_{C^{3}\left(U\right)}\right)\label{23}
\end{equation}
where $\tilde{A}^{\#}$ is the shape operator of $\tilde{\Sigma}$
at $\tilde{X}=X+hN$ and $\nabla_{\Sigma}\tilde{A}^{\#}$ is the covariant
derivative of $\tilde{A}^{\#}$ (which can be regarded as a 2-tensor
on $\Sigma$ via the normal graphic parametrization) with respect
to $\Sigma$. \end{lem}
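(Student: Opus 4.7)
The plan is to first pass to the vertical-graph description of $\Sigma$ and $\tilde\Sigma$ over $\mathcal{C}$ already established in (6)--(8) and (15), then convert to the normal graph $\tilde X=X+hN$ via an inverse function theorem argument, and finally read off the weighted bounds on $h$ and on $\tilde A^\#$ directly from the vertical estimates.

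Near each $\hat X_{\mathcal{C}}\in\mathcal{C}\setminus\bar B_R$, let $\mathtt{u}$ and $\tilde{\mathtt{u}}$ be the vertical graphs of $\Sigma$ and $\tilde\Sigma$ over $T_{\hat X_\mathcal{C}}\mathcal{C}$, with $\mathtt{w}$ the corresponding graph of $\mathcal{C}$. Applying (15) at $t=-1$ to each of $\Sigma$ and $\tilde\Sigma$ separately and using the triangle inequality gives the key improved decay
\[
|\hat X_{\mathcal{C}}|^{\,k+1}\,\|\partial_x^{k}(\tilde{\mathtt{u}}-\mathtt{u})\|_{L^{\infty}(B^{n}_{\rho|\hat X_{\mathcal{C}}|})}\le C(n,\mathcal{C},\|F\|_{C^{3}(U)}),\qquad k=0,1,2,3.
\]
Although $\mathtt{u}$ and $\tilde{\mathtt{u}}$ are only $C^{5}$-close to $\mathtt{w}$ ambiently, their difference gains one extra factor of $|X|^{-1}$ because both satisfy the $F$-self-shrinker equation, which is the content of (15).

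For $X=\hat X_{\mathcal{C}}+(x,\mathtt{u}(x))\in\Sigma$ with unit normal $N(x)=(-\partial_x\mathtt{u},1)/\sqrt{1+|\partial_x\mathtt{u}|^{2}}$, imposing $\tilde X(x')=X+hN(x)$ and matching components gives
\[
x'-x+\partial_x\mathtt{u}(x)\bigl(\tilde{\mathtt{u}}(x')-\mathtt{u}(x)\bigr)=0,\qquad h(x)=\sqrt{1+|\partial_x\mathtt{u}|^{2}}\,\bigl(\tilde{\mathtt{u}}(x')-\mathtt{u}(x)\bigr).
\]
Since $|\partial_x\mathtt{u}|\le \tfrac14$ by (6), (8), the inverse function theorem solves the first equation uniquely for $x'(x)$ on each such patch, so $\tilde\Sigma$ is genuinely a normal graph over $\Sigma\setminus\bar B_R$. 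Then $|h|$ is controlled by $\|\tilde{\mathtt{u}}-\mathtt{u}\|_{L^{\infty}}\le C|\hat X_{\mathcal{C}}|^{-1}\sim C|X|^{-1}$; differentiating the implicit relation and substituting the weighted bounds on $\partial_x^{k}\mathtt{u}$ (from (7), (9)) and on $\partial_x^{k}(\tilde{\mathtt{u}}-\mathtt{u})$ (from the key step) produces $|\nabla_\Sigma h|\le C|X|^{-2}$ and $|\nabla_\Sigma^{2}h|\le C|X|^{-3}$, giving (21).

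The shape-operator bounds (22) follow from the explicit formula (13): pulling $\tilde A^\#$ back to $\Sigma$ along the normal graph introduces only a smooth change of frame with bounded derivatives (by (6)--(8)), so $\tilde A^\#-A^\#$ becomes a smooth expression that vanishes identically when $\tilde{\mathtt{u}}\equiv\mathtt{u}$; a Taylor expansion in $\tilde{\mathtt{u}}-\mathtt{u}$ and $x'-x$ writes this difference as $\partial_x^{\le 2}(\tilde{\mathtt{u}}-\mathtt{u})$ contracted with bounded coefficients, yielding the $|X|^{-3}$ decay, and one additional differentiation yields the $|X|^{-4}$ bound for $\nabla_\Sigma\tilde A^\#-\nabla_\Sigma A^\#$. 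Bound (23) is not a difference estimate: it is just the individual bound $|\partial_x^{4}\tilde{\mathtt{u}}|\le C|X|^{-3}$, which follows immediately from (7) and (9) applied to $\tilde{\mathtt{u}}$. The main obstacle is bookkeeping: tracking how the weighted estimates propagate through the implicit relation $x\mapsto x'(x)$ and through the chain rule for $\partial_x^{k}h$. The algebra is long but entirely routine, since every term either inherits an extra $|X|^{-1}$ from the key decay of $\tilde{\mathtt{u}}-\mathtt{u}$ or from the background cone estimates (7), so no subtle cancellation is required beyond the triangle inequality.
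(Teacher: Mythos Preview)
Your proposal is correct and follows essentially the same approach as the paper: both pass to the vertical graphs $\mathtt{u},\tilde{\mathtt{u}}$ over $T_{\hat X_{\mathcal C}}\mathcal C$, extract the improved decay $|\hat X_{\mathcal C}|^{k+1}\|\partial_x^k(\tilde{\mathtt u}-\mathtt u)\|\le C$ from (\ref{15}) via the triangle inequality, set up the same implicit relation for $h$ (your pair of equations is algebraically equivalent to the paper's (\ref{28})--(\ref{29})), apply the implicit function theorem and implicit differentiation for (\ref{21}), and then read off (\ref{22})--(\ref{23}) from the explicit shape-operator formula (\ref{13}) together with the change of variables $x\mapsto x'$. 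The only cosmetic differences are that the paper bounds $|h|$ first via the distance interpretation $|h|\le|\tilde{\mathtt u}(\psi(x))-\mathtt u(\psi(x))|$ before invoking the implicit function theorem, and that for (\ref{23}) the paper keeps track of the Christoffel and change-of-frame corrections explicitly rather than absorbing them into ``bounded coefficients''; neither changes the argument.
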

\begin{proof}
Choose $R\gg1$ (depending on $\Sigma$, $\tilde{\Sigma}$, $n$,
$\mathcal{C}$, $\parallel F\parallel_{C^{3}\left(U\right)}$) so
that $\Sigma\setminus\bar{B}_{R}$ and $\tilde{\Sigma}\setminus\bar{B}_{R}$
have the local graph coordinates over tangent hyperplanes of $\mathcal{C}$
with appropriate estimates for the graphs as before. That is, for
each $\hat{X}\in\Sigma\setminus\bar{B}_{R}$, we can respectively
parametrize $\Sigma$ and $\tilde{\Sigma}$ locally (near $\Pi\left(\hat{X}\right)=\hat{X}_{\mathcal{C}}\in\mathcal{C}$)
by 

\[
X=X(x)\equiv\Pi\left(\hat{X}\right)+\left(x,\,\mathtt{u}(x)\right)
\]
\[
\tilde{X}=\tilde{X}(x)\equiv\Pi\left(\hat{X}\right)+\left(x,\,\tilde{\mathtt{u}}(x)\right)
\]
for $x=\left(x_{1},\cdots,\, x_{n}\right)\in B_{\rho|\Pi\left(\hat{X}\right)|}^{n}\subset T_{\Pi\left(\hat{X}\right)}\mathcal{C}$,
which satisfy (by ($\ref{6}$), (\ref{7}), (\ref{8}) and the comparability
of $|\hat{X}|$ and $|\hat{X}_{\mathcal{C}}|$)
\[
\{|\hat{X}|^{-1}\parallel\mathtt{u}\parallel_{L^{\infty}(B_{\rho|\Pi\left(\hat{X}\right)|}^{n})}+\parallel\partial_{x}\mathtt{u}\parallel_{L^{\infty}(B_{\rho|\Pi\left(\hat{X}\right)|}^{n})}+\,|\hat{X}|\parallel\partial_{x}^{2}\mathtt{u}\parallel_{L^{\infty}(B_{\rho|\Pi\left(\hat{X}\right)|}^{n})}+\cdots
\]
\begin{equation}
+\,|\hat{X}|{}^{4}\parallel\partial_{x}^{5}\mathtt{u}\parallel_{L^{\infty}(B_{\rho|\Pi\left(\hat{X}\right)|}^{n})}\}\,\,\leq C\left(n,\,\mathcal{C}\right)\label{24}
\end{equation}
\[
\{|\hat{X}|^{-1}\parallel\tilde{\mathtt{u}}\parallel_{L^{\infty}(B_{\rho|\Pi\left(\hat{X}\right)|}^{n})}+\parallel\partial_{x}\tilde{\mathtt{u}}\parallel_{L^{\infty}(B_{\rho|\Pi\left(\hat{X}\right)|}^{n})}+\,|\hat{X}|\parallel\partial_{x}^{2}\tilde{\mathtt{u}}\parallel_{L^{\infty}(B_{\rho|\Pi\left(\hat{X}\right)|}^{n})}+\cdots
\]
\begin{equation}
+\,|\hat{X}|^{4}\parallel\partial_{x}^{5}\tilde{\mathtt{u}}\parallel_{L^{\infty}(B_{\rho|\Pi\left(\hat{X}\right)|}^{n})}\}\,\,\leq C\left(n,\,\mathcal{C}\right)\label{25}
\end{equation}
Also, by applying the triangle inequality to (\ref{15}), we get
\[
\{|\hat{X}|\parallel\tilde{\mathtt{u}}-\mathtt{u}\parallel_{L^{\infty}(B_{\rho|\Pi\left(\hat{X}\right)|}^{n})}\,+|\hat{X}|^{2}\parallel\partial_{x}\tilde{\mathtt{u}}-\partial_{x}\mathtt{u}\parallel_{L^{\infty}(B_{\rho|\Pi\left(\hat{X}\right)|}^{n})}\,+|\hat{X}|^{3}\parallel\partial_{x}^{2}\tilde{\mathtt{u}}-\partial_{x}^{2}\mathtt{u}\parallel
\]
\begin{equation}
+|\hat{X}|^{4}\parallel\partial_{x}^{3}\tilde{\mathtt{u}}-\partial_{x}^{3}\mathtt{u}\parallel_{L^{\infty}(B_{\rho|\Pi\left(\hat{X}\right)|}^{n})}\}\,\,\leq C\left(n,\,\mathcal{C},\,\parallel F\parallel_{C^{3}\left(U\right)}\right)\label{26}
\end{equation}
By (\ref{26}), we may assume that $\tilde{\Sigma}$ is a normal graph
of $h$ defined on $\Sigma\setminus\bar{B}_{R}$; that is, for each
$x\in B_{\frac{\rho}{2}\mid\Pi\left(\hat{X}\right)\mid}^{n}\subset T_{\Pi\left(\hat{X}\right)}\mathcal{C}$, there
is a unique $y\in B_{\rho\mid\Pi\left(\hat{X}\right)\mid}^{n}\subset T_{\Pi\left(\hat{X}\right)}\mathcal{C}$ such
that

\begin{equation}
\Pi\left(\hat{X}\right)\,+\,\left(x,\,\mathtt{u}\left(x\right)\right)\,+\, h(x)\frac{\left(-\partial_{x}\mathtt{u},\,1\right)}{\sqrt{1+|\partial_{x}\mathtt{u}|{}^{2}}}=\Pi\left(\hat{X}\right)\,+\,\left(y,\,\tilde{\mathtt{u}}\left(y\right)\right)\label{27}
\end{equation}
or equivalently,
\[
\left(x-h(x)\frac{\partial_{x}\mathtt{u}}{\sqrt{1+|\partial_{x}\mathtt{u}|{}^{2}}},\:\mathtt{u}\left(x\right)+\frac{h(x)}{\sqrt{1+|\partial_{x}\mathtt{u}|{}^{2}}}\right)=\left(y,\,\tilde{\mathtt{u}}\left(y\right)\right)
\]
where $\frac{\left(-\partial_{x}\mathtt{\mathtt{u}},\,1\right)}{\sqrt{1+|\partial_{x}\mathtt{\mathtt{u}}|^{2}}}$
is the unit normal $N$ of $\Sigma$ at $X(x)=\Pi\left(\hat{X}\right)+\left(x,\,\mathtt{u}\left(x\right)\right)$.
In other words, $h$ is defined implicitly by the following equation
\begin{equation}
\tilde{\mathtt{u}}\left(\psi(x)\right)-\left(\mathtt{u}+\frac{h(x)}{\sqrt{1+|\partial_{x}\mathtt{u}|{}^{2}}}\right)=0\label{28}
\end{equation}
where 
\begin{equation}
\psi(x)=x-h(x)\frac{\partial_{x}\mathtt{u}}{\sqrt{1+|\partial_{x}\mathtt{u}|{}^{2}}}\label{29}
\end{equation}
defines a map from $B_{\frac{\rho}{2}|\Pi\left(\hat{X}\right)|}^{n}\subset T_{\Pi\left(\hat{X}\right)}\mathcal{C}$
into $B_{\rho|\Pi\left(\hat{X}\right)|}^{n}\subset T_{\Pi\left(\hat{X}\right)}\mathcal{C}$. Since $|h\left(x\right)|$
stands for the distance from the point in (\ref{27}):
$$
\tilde{X}\left(\psi(x)\right)=\Pi\left(\hat{X}\right)+\left(\psi(x),\,\tilde{u}\left(\psi(x)\right)\right)
$$
to $\Sigma$, we
immediately have 
\[
|h(x)|\,\leq\,|\tilde{\mathtt{u}}\left(\psi(x)\right)-\mathtt{u}(\psi(x))|\,\leq C\left(n,\,\mathcal{C},\,\parallel F\parallel_{C^{3}\left(U\right)}\right)|\hat{X}|{}^{-1}
\]

To proceed further, notice that for the unit normal vectors
of $\Sigma$ and $\tilde{\Sigma}$: 
\begin{equation}
N\left(x\right)=\frac{\left(-\partial_{x}\mathtt{u},\,1\right)}{\sqrt{1+|\partial_{x}\mathtt{u}|{}^{2}}},\quad\tilde{N}\left(x\right)=\frac{\left(-\partial_{x}\tilde{\mathtt{u}},\,1\right)}{\sqrt{1+|\partial_{x}\tilde{\mathtt{u}}|{}^{2}}}\label{30}
\end{equation}
we may assume (by (\ref{26}), (\ref{24})) that 
\[
\parallel\tilde{N}-N\parallel_{L^{\infty}(B_{\rho|\Pi\left(\hat{X}\right)|}^{n})}+\parallel N\circ\psi-N\parallel_{L^{\infty}(B_{\frac{\rho}{2}|\Pi\left(\hat{X}\right)|}^{n})}\,\,\leq\frac{1}{3}
\]
which implies that for each $x\in B_{\frac{\rho}{2}|\Pi\left(\hat{X}\right)|}^{n}\subset T_{\Pi\left(\hat{X}\right)}\mathcal{C}$,
\begin{equation}
\tilde{N}\left(\psi(x)\right)\cdot N(x)\geq\, N(x)\cdot N(x)\,-\,|\tilde{N}\left(\psi(x)\right)-N(x)|\,|N(x)|\label{31}
\end{equation}
\[
\geq1\,-\,\left(\tilde{|N}\left(\psi(x)\right)-N\left(\psi(x)\right)|\,+\,|N\left(\psi(x)\right)-N(x)|\right)\,\geq\frac{2}{3}
\]
Let

\[
\Theta\left(x,\, s\right)=\tilde{\mathtt{u}}\left(x-s\frac{\partial_{x}\mathtt{u}}{\sqrt{1+\mid\partial_{x}\mathtt{u}\mid^{2}}}\right)-\left(\mathtt{u}+\frac{s}{\sqrt{1+\mid\partial_{x}\mathtt{u}\mid^{2}}}\right)
\]
then by (\ref{28}), (\ref{29}) and (\ref{31}), we have $\Theta\left(x,\, h(x)\right)=0$
and 
\[
\partial_{s}\Theta\left(x,\, h(x)\right)=-\sqrt{1+\mid\partial_{y}\tilde{\mathtt{u}}\left(\psi(x)\right)\mid^{2}}\;\tilde{N}\left(\psi(x)\right)\cdot N(x)\leq-\frac{2}{3}
\]
Therefore, by the implicit function theorem, we have $h\in C^{2}\left(B_{\frac{\rho}{2}\mid\Pi\left(\hat{X}\right)\mid}^{n}\right)$.
Besides, by doing the implicit differentiation of (\ref{28}) (or equivalently
$\Theta\left(x,\, h(x)\right)=0$), we get
\begin{equation}
\frac{1+\partial_{j}\tilde{\mathtt{u}}\circ\psi\cdot\partial_{j}\mathtt{u}}{\sqrt{1+|\partial_{x}\mathtt{u}|{}^{2}}}\partial_{i}h\,=\,\left(\partial_{i}\tilde{\mathtt{u}}\circ\psi-\partial_{i}\mathtt{u}\right)\label{32}
\end{equation}
\[
-\left(\partial_{j}\tilde{\mathtt{u}}\circ\psi\cdot\partial_{i}\frac{\partial_{j}\mathtt{u}}{\sqrt{1+|\partial_{x}\mathtt{u}|{}^{2}}}+\partial_{j}\mathtt{u}\frac{\partial_{ij}^{2}\mathtt{u}}{\left(1+|\partial_{x}\mathtt{u}|{}^{2}\right)^{\frac{3}{2}}}\right)h
\]
in which we sum over repeated indices. Note that we can use (\ref{32}),
together with (\ref{24}) and (\ref{26}), to estimate $\partial_{x}h$.
For instance, for the first term on the RHS of the equation, we have
\[
|\partial_{i}\tilde{\mathtt{u}}\circ\psi-\partial_{i}\mathtt{u}|\,\leq\,|\partial_{i}\tilde{\mathtt{u}}\circ\psi-\partial_{i}\mathtt{u}\circ\psi|\,+|\partial_{i}\mathtt{u}\circ\psi-\partial_{i}\mathtt{u|}
\]

\noindent \resizebox{1.009\linewidth}{!}{
  \begin{minipage}{\linewidth}
  \begin{align*}
\leq C\left(n,\,\mathcal{C},\,\parallel F\parallel_{C^{3}\left(U\right)}\right)\,|\hat{X}|^{-2}+\,\sum_{j}\int_{0}^{1}\Big|\partial_{ij}^{2}\mathtt{u}\left(x-\theta h\frac{\partial_{x}\mathtt{u}}{\sqrt{1+\mid\partial_{x}\mathtt{u}\mid^{2}}}\right)\Big|\, d\theta\;\frac{|\partial_{j}\mathtt{u}|}{\sqrt{1+|\partial_{x}\mathtt{u}|{}^{2}}}|h|
\end{align*}
  \end{minipage}
}

\[
\leq C\left(n,\,\mathcal{C},\,\parallel F\parallel_{C^{3}\left(U\right)}\right)\,|\hat{X}|{}^{-2}
\]
Thus we get 
\[
\parallel\partial_{x}h\parallel_{L^{\infty}(B_{\frac{\rho}{2}|\Pi\left(\hat{X}\right)|}^{n})}\,\leq C\left(n,\,\mathcal{C},\,\parallel F\parallel_{C^{3}\left(U\right)}\right)\,|\hat{X}|{}^{-2}
\]
Similarly, doing the implicit differentiation of (\ref{32}) and using
(\ref{24}) and (\ref{26}) yields 
\[
\parallel\partial_{x}^{2}h\parallel_{L^{\infty}(B_{\frac{\rho}{2}|\Pi\left(\hat{X}\right)|}^{n})}\,\leq C\left(n,\,\mathcal{C},\,\parallel F\parallel_{C^{3}\left(U\right)}\right)\,|\hat{X}|{}^{-3}
\]
The bounds on the covariant derivatives of $h$ follow from the the
following estimates on the pull-back metric $g_{ij}=\partial_{i}X\cdot\partial_{j}X$
and the Christoffel symbols $\Gamma_{ij}^{k}$ in (\ref{16}) associated
with the local coordinates $x=\left(x_{1},\cdots,\, x_{n}\right)$:
\begin{equation}
\delta_{ij}\leq\, g_{ij}=1+\partial_{i}\mathtt{u}\,\partial_{j}\mathtt{u}\,\leq\frac{5}{4}\delta_{ij}\label{33}
\end{equation}
\begin{equation}
|\Gamma_{ij}^{k}|\,=\frac{|\partial_{k}\mathtt{u}|}{1+|\partial_{x}\mathtt{u}|{}^{2}}|\partial_{ij}^{2}\mathtt{u}|\,\leq C\left(n,\,\mathcal{C},\, F\right)\,|\hat{X}|{}^{-1}\label{34}
\end{equation}
where we have used (\ref{24}). This completes the derivation of (\ref{21}).

As for (\ref{22}), notice that the normal graph reparametrization
of $\tilde{\Sigma}$ amounts to the following change of variables:
\begin{equation}
\tilde{X}=\Pi\left(\hat{X}\right)+\left(y,\,\tilde{\mathtt{u}}(y)\right)\quad\textrm{with}\quad y=\psi(x)=x-h(x)\frac{\partial_{x}\mathtt{u}}{\sqrt{1+|\partial_{x}\mathtt{u}|{}^{2}}}\label{35}
\end{equation}
So from (\ref{35}), (\ref{24}) and (\ref{21}), we have
\begin{equation}
\frac{\partial y_{k}}{\partial x_{i}}\,=\delta_{i}^{k}-\, h\cdot\partial_{x_{i}}\left(\frac{\partial_{x_{j}}\mathtt{u}}{\sqrt{1+|\partial_{x}\mathtt{u}|{}^{2}}}\right)-\,\partial_{x_{i}}h\frac{\partial_{k}\mathtt{u}}{\sqrt{1+|\partial_{x}\mathtt{u}|{}^{2}}}=\,\delta_{i}^{k}+\, O\left(|\hat{X}|{}^{-2}\right)\label{36}
\end{equation}
By taking $R$ sufficiently large, we may assume that $\psi:\, B_{\frac{\rho}{2}|\Pi\left(\hat{X}\right)|}^{n}\rightarrow\textrm{Im}\psi\subset B_{\rho|\Pi\left(\hat{X}\right)|}^{n}$
is a $C^{2}$ diffeomorphism and the inverse of $\frac{\partial y_{k}}{\partial x_{i}}$
satisfies
\[
\frac{\partial x_{i}}{\partial y_{k}}=\delta_{k}^{i}+\, O\left(|\hat{X}|{}^{-2}\right)
\]
It follows that the components of shape operators $\tilde{A}^{\#}$
of $\tilde{\Sigma}$ and $A^{\#}$ of $\Sigma$ with respect to the
local coordinates $x=\left(x_{1},\cdots,\, x_{n}\right)$ are respectively
equal to 
\begin{equation}
\tilde{A}_{i}^{j}=\frac{\partial y_{k}}{\partial x_{i}}\frac{\partial x_{j}}{\partial y_{l}}\,\partial_{y_{k}}\left(\frac{\partial_{y_{l}}\tilde{\mathtt{u}}}{\sqrt{1+|\partial_{y}\tilde{\mathtt{u}}|{}^{2}}}\right)\Big|_{y=\varphi(x)},\quad A_{i}^{j}=\partial_{x_{i}}\left(\frac{\partial_{x_{j}}\mathtt{u}}{\sqrt{1+|\partial_{x}\mathtt{u}|{}^{2}}}\right)\label{37}
\end{equation}
in which we sum over repeated indices. Using the triangle inequality,
combined with (\ref{24}), (\ref{26}), (\ref{35}), (\ref{21}) and
(\ref{36}), we then get from (\ref{37}) that 
\[
\Big|\tilde{A}_{i}^{j}-A_{i}^{j}\Big|\,\leq C\left(n,\,\mathcal{C},\,\parallel F\parallel_{C^{3}\left(U\right)}\right)\,|\hat{X}|{}^{-3}
\]
Due to (\ref{33}), the above implies that
\[
\Big|\tilde{A}^{\#}-A^{\#}\Big|\,\leq C\left(n,\,\mathcal{C},\,\parallel F\parallel_{C^{3}\left(U\right)}\right)\,|\hat{X}|{}^{-3}
\]
Also, in view of $\nabla_{\Sigma}\tilde{A}^{\#}\sim\nabla_{r}\tilde{A}_{i}^{j}$,
$\nabla_{\Sigma}A^{\#}\sim\nabla_{r}A_{i}^{j}$ and
\begin{equation}
\nabla_{r}\tilde{A}_{i}^{j}=\partial_{r}\tilde{A}_{i}^{j}-\,\Gamma_{ri}^{s}\tilde{A}_{s}^{j}+\,\Gamma_{rs}^{j}\tilde{A}_{i}^{s},\quad\nabla_{r}A_{i}^{j}=\partial_{r}A_{i}^{j}-\,\Gamma_{ri}^{s}A_{s}^{j}+\,\Gamma_{rs}^{j}A_{i}^{s}\label{38}
\end{equation}
(in which we sum over repeated indices), we can similarly derive
\[
|\nabla_{\Sigma}\tilde{A}^{\#}-\nabla_{\Sigma}A^{\#}|\,\leq C\left(n,\,\mathcal{C},\,\parallel F\parallel_{C^{3}\left(U\right)}\right)\,|\hat{X}|{}^{-4}
\]
This completes (\ref{22}). 

(\ref{23}) follows from taking one more derivative of (\ref{38})
and use (\ref{37}), (\ref{34}), (\ref{24}), (\ref{26}) and (\ref{33}).
\end{proof}
Next, we would like to define a 2-tensor $\mathbf{a}$ on $\Sigma$ (outside
a large ball), which would be served as the coefficients of the differential
equation to be satisfied by the deviation $h$. Note that by (\ref{17}),
Lemma  \ref{l8} (in particular (\ref{22})), we may assume that 
\begin{equation}
\left(1-\theta\right)|X|\, A^{\#}+\theta|X|\,\tilde{A}^{\#}\in U\qquad\forall\, X\in\Sigma\setminus\bar{B}_{R},\;\theta\in\left[0,\,1\right]\label{39}
\end{equation}
where $\tilde{A}^{\#}$ is the shape operator of $\tilde{\Sigma}$
at $\tilde{X}=X+hN$.
\begin{defn}\label{d9}
In the setting of Lemma \ref{l8}, let's take a local coordinate $x=\left(x_{1},\cdots,\, x_{n}\right)$
of $\Sigma$ (outside a larger ball) so that $\Sigma$ and $\tilde{\Sigma}$
can be respectively parametrized as
\[
X=X\left(x\right),\quad\tilde{X}\left(x\right)=X\left(x\right)+h\left(x\right)N\left(x\right)
\]
where $h\left(x\right)$is the deviation and $N\left(x\right)$ is
the unit-normal of $\Sigma$ at $X\left(x\right)$. Then we define 
$$\bar{\mathtt{a}}^{ij}\left(x\right)=\sum_{k}\bar{\mathtt{a}}_{k}^{i}\left(x\right)\, g^{kj}\left(x\right)$$ 
$$\bar{\mathtt{a}}_{j}^{i}\left(x\right)=\int_{0}^{1}\frac{\partial F}{\partial S_{i}^{j}}\left(\left(1-\theta\right)|X|\, A^{\#}\left(x\right)+\theta|X|\,\tilde{A}^{\#}\left(x\right)\right)\, d\theta$$

\noindent and its symmetrization 
\[
\mathbf{a}^{ij}\left(x\right)=\frac{1}{2}\left(\bar{\mathtt{a}}^{ij}\left(x\right)+\bar{\mathtt{a}}^{ji}\left(x\right)\right)
\]
where $g^{ij}\left(x\right)$ is the inverse of the pull-back metric
$g_{ij}=\partial_{i}X\cdot\partial_{j}X$, $A^{\#}\left(x\right)\sim A_{i}^{j}\left(x\right)=-\partial_{i}N\cdot\partial_{j}X$
is the shape operator of $\Sigma$ at $X\left(x\right),$ $\tilde{A}_{t}^{\#}\left(x\right)\sim\tilde{A}_{i}^{j}\left(x,\, t\right)=-\partial_{i}\tilde{N}\cdot\partial_{j}\tilde{X}$
is the shape operator of $\tilde{\Sigma}$ at $\tilde{X}\left(x\right)$
with $\tilde{N}\left(x\right)$ being the unit-normal of $\tilde{\Sigma}$
at $\tilde{X}\left(x\right)$.\\
Note that 
\[
\bar{\mathtt{a}}_{j}^{i}\left(x\right)=\int_{0}^{1}\frac{\partial F}{\partial S_{i}^{j}}\left(\left(1-\theta\right)|X|\, A^{\#}\left(x\right)+\theta|X|\,\tilde{A}^{\#}\left(x\right)\right)\, d\theta
\]
\[
=\int_{0}^{1}\frac{\partial F}{\partial S_{i}^{j}}\left(\left(1-\theta\right)A^{\#}\left(x\right)+\theta\tilde{A}^{\#}\left(x\right)\right)\, d\theta
\]
since $\frac{\partial F}{\partial S_{i}^{j}}$ is homogeneous of degree
0; besides, the operator $\mathbf{a}$ is independent of the choice
of local coordinates and hence defines a 2-tensor on $\Sigma$.
\end{defn}
We have the following estimates for the tensor $\mathbf{a}$, which
is based on (\ref{18}), (\ref{19}), (\ref{20}), (\ref{22}), (\ref{23})
and the homogeneity of $F$ and its derivatives.
\begin{lem}\label{l10}
There exits $R=R\left(\Sigma,\,\tilde{\Sigma},\mathcal{\, C},\, U,\,\parallel F\parallel_{C^{3}\left(U\right)},\,\lambda,\,\varkappa\right)\geq1$
such that
\begin{equation}
\frac{\lambda}{3}\leq\mathbf{a}\leq\frac{3}{\lambda}\label{40}
\end{equation}
\begin{equation}
|X|\Big|\nabla_{\Sigma}\mathbf{a}\Big|\,\leq3\varkappa\label{41}
\end{equation}
\begin{equation}
|X|{}^{2}\Big|\nabla_{\Sigma}^{2}\mathbf{a\Big|}\,\leq C\left(n,\,\mathcal{C},\,\parallel F\parallel_{C^{3}\left(U\right)}\right)\label{42}
\end{equation}
for all $X\in\Sigma\setminus\bar{B}_{R}$. \end{lem}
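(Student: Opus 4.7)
The plan is to reduce each of the three estimates to the corresponding bound for $\partial F/\partial S$ at the rescaled shape operator $|X| A^\#$ (which lies in $U$ by (\ref{17})), and then absorb the deviation caused by replacing $A^\#$ with a convex combination involving $\tilde A^\#$, using (\ref{22}), (\ref{23}) together with the homogeneity of $F$ and its derivatives. Throughout, I would choose $R$ large enough (depending on the stated parameters) so that the convex combination $(1-\theta)|X| A^\# + \theta |X| \tilde A^\#$ sits inside $U$ for every $\theta \in [0,1]$, as guaranteed by (\ref{39}).

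For (\ref{40}): since $\partial F/\partial S$ is homogeneous of degree $0$, evaluating it at either the scaled or the unscaled convex combination gives the same result. By (\ref{18}) and continuity on $U$, the integrand defining $\bar{\mathtt{a}}^i_j$ differs from $\partial F/\partial S^j_i(A^\#)$ by an $O(|X|^{-2})$ quantity (thanks to (\ref{22})), so it lies strictly between $\lambda/3$ and $3/\lambda$ times the identity for $R$ large. Lowering the upper index with $g^{kj}=\delta^{kj}+O(|X|^{-2})$ (cf.\ (\ref{33})) and symmetrizing perturb the eigenvalues by $O(|X|^{-2})$, which is absorbed into the final constants.

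For (\ref{41}): I would differentiate under the integral sign to get
\[
\nabla_r \bar{\mathtt{a}}^i_j = \int_0^1 \frac{\partial^2 F}{\partial S^j_i \partial S^l_k}\bigl((1-\theta) A^\# + \theta \tilde A^\#\bigr) \, \nabla_r\bigl((1-\theta) A^\# + \theta \tilde A^\#\bigr)_k^l \, d\theta.
\]
Because $\partial^2 F/\partial S^2$ is homogeneous of degree $-1$, its value at the unscaled argument equals $|X|$ times its value at the $|X|$-scaled argument, which is exactly the form appearing in (\ref{19}). Hence the contribution from pairing $A^\#$ with $\nabla A^\#$ is at most $2\varkappa/|X|$, while the error terms produced by replacing $A^\#$ with the convex combination, or $\nabla A^\#$ with its combination, are of order $|X|^{-3}$ by (\ref{22}) and the $C^{2}(U)$ bound on $F$, i.e.\ strictly lower order. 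The passage $\bar{\mathtt{a}}^{ij}=g^{kj}\bar{\mathtt{a}}^i_k$ is harmless since $\nabla_r g^{kj}=0$, and symmetrization preserves the bound. Taking $R$ large enough then yields (\ref{41}) with the constant $3\varkappa$. For (\ref{42}) I would differentiate once more, producing a $\partial^3 F$-term paired with $(\nabla A^\#)^2$ and a $\partial^2 F$-term paired with $\nabla^2 A^\#$, together with tilded and mixed analogues. Since $\partial^3 F$ is homogeneous of degree $-2$, combining (\ref{20}), (\ref{22}), (\ref{23}), and the $C^3(U)$ bound on $F$ shows that each term is controlled by a constant times $|X|^{-2}$.

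The main technical point—rather than a sharp obstacle—is the homogeneity bookkeeping: the hypotheses (\ref{18})--(\ref{20}) are phrased for the rescaled operator $|X| A^\#$ sitting in $U$, whereas $\mathbf{a}$ is built from the unscaled $A^\#$. The two pictures line up because $\partial^k F/\partial S^k$ is homogeneous of degree $1-k$, so each derivative of $\mathbf{a}$ picks up exactly one additional factor of $|X|^{-1}$, which is precisely what the weights $|X|$ and $|X|^2$ on the left-hand sides of (\ref{41}), (\ref{42}) reflect.
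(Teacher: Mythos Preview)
Your proposal is correct and follows essentially the same route as the paper's proof: both arguments use (\ref{39}) to keep the scaled convex combination inside $U$, invoke the homogeneity of the successive derivatives of $F$ to pass between the scaled and unscaled pictures, and then appeal to (\ref{18})--(\ref{20}) together with (\ref{22}), (\ref{23}) to control the main term and the $\tilde A^\#$-perturbations. Your explicit remark on the degree-$(1-k)$ homogeneity bookkeeping is exactly the mechanism the paper uses implicitly when it rewrites $|X|\,\nabla_r\bar{\mathtt a}^i_j$ with the scaled arguments before applying (\ref{19}).
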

\begin{proof}
By (\ref{18}), (\ref{19}), (\ref{39}), (\ref{22}), the homogeneity
and continuity of $F$ (and its derivatives), there exists $R=R\left(\Sigma,\,\tilde{\Sigma},\mathcal{\, C},\, U,\,\parallel F\parallel_{C^{3}\left(U\right)},\,\lambda,\,\varkappa\right)\geq1$
such that
\[
\frac{\lambda}{3}\delta_{j}^{i}\,\leq\,\bar{\mathtt{a}}_{j}^{i}=\int_{0}^{1}\frac{\partial F}{\partial S_{i}^{j}}\left(\left(1-\theta\right)|X|\, A^{\#}+\theta|X|\,\tilde{A}^{\#}\right)\, d\theta\,\leq\frac{3}{\lambda}\delta_{j}^{i}
\]

\noindent \resizebox{1.009\linewidth}{!}{
  \begin{minipage}{\linewidth}
  \begin{align*}
|X|\Big|\nabla_{r}\bar{\mathtt{a}}_{i}^{j}\Big|\,=\,|X|\,\Big|\int_{0}^{1}\sum_{k,\, l}\frac{\partial^{2}F}{\partial S_{i}^{j}\partial S_{k}^{l}}\left(\left(1-\theta\right)A^{\#}+\theta\tilde{A}^{\#}\right)\cdot\left(\left(1-\theta\right)\nabla_{r}A_{k}^{l}+\theta\nabla_{r}\tilde{A}_{k}^{l}\right)\, d\theta\Big|
\end{align*}
  \end{minipage}
}

\noindent \resizebox{1.009\linewidth}{!}{
  \begin{minipage}{\linewidth}
  \begin{align*}
=\Big|\int_{0}^{1}\sum_{k,\, l}\frac{\partial^{2}F}{\partial S_{i}^{j}\partial S_{k}^{l}}\left(\left(1-\theta\right)|X|\, A^{\#}+\theta|X|\,\tilde{A}^{\#}\right)\cdot\left(\left(1-\theta\right)|X|{}^{2}\nabla_{r}A_{k}^{l}+\theta|X|^{2}\nabla_{r}\tilde{A}_{k}^{l}\right)\, d\theta\Big|\,\leq\,3\varkappa
\end{align*}
  \end{minipage}
}\\

Likewise, with the help of (\ref{20}), (\ref{23}), we get 
\[
|X|{}^{2}\Big|\nabla_{\Sigma}^{2}\bar{\mathtt{a}}\Big|\,\leq C\left(n,\,\mathcal{C},\,\parallel F\parallel_{C^{3}\left(U\right)}\right)
\]
The conclusion follows immediately.
\end{proof}

Now we are in a position to derive an equation for $h$.
\begin{prop}\label{p11}
There exits $R=R\left(\Sigma,\,\tilde{\Sigma},\mathcal{\, C},\, U,\,\parallel F\parallel_{C^{3}\left(U\right)},\,\lambda,\,\varkappa\right)\geq1$
such that the deviation $h$ satisfies
\begin{equation}
\nabla_{\Sigma}\cdot\left(\mathbf{a}\, dh\right)-\frac{1}{2}\left(X\cdot\nabla_{\Sigma}h\,-\, h\right)=O\left(|X|^{-1}\right)|\nabla_{\Sigma}h|\,+\, O\left(|X|^{-2}\right)|h|\label{43}
\end{equation}
for $X\in\Sigma\setminus\bar{B}_{R}$, where 
\[
\nabla_{\Sigma}\cdot\left(\mathbf{a}\, dh\right)=\sum_{i,\, j}\nabla_{i}\left(\mathbf{a}^{ij}\nabla_{j}h\right)
\]
and the notation $O\left(|X|{}^{-1}\right)$
means 
\[
\Big|O\left(|X|^{-1}\right)\Big|\,\leq C\left(n,\,\mathcal{C},\,\parallel F\parallel_{C^{3}\left(U\right)}\right)\,|X|{}^{-1}
\]
\end{prop}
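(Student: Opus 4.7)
The plan is to subtract the two $F$ self-shrinker equations, linearize the difference $F(\tilde{A}^{\#})-F(A^{\#})$ via the mean value theorem (which naturally produces the tensor $\bar{\mathtt{a}}^{i}_{j}$ of Definition \ref{d9}), and then expand the change in the shape operator and in the quantity $X\cdot N$ induced by the normal-graph perturbation $\tilde{X}=X+hN$. First I would use the shrinker identities $F(A^{\#})+\tfrac{1}{2}X\cdot N=0$ and $F(\tilde{A}^{\#})+\tfrac{1}{2}\tilde{X}\cdot\tilde{N}=0$ to write
\[
\sum_{i,j}\bar{\mathtt{a}}^{i}_{j}\bigl(\tilde{A}^{j}_{i}-A^{j}_{i}\bigr)\;=\;F(\tilde{A}^{\#})-F(A^{\#})\;=\;-\tfrac{1}{2}\bigl(\tilde{X}\cdot\tilde{N}-X\cdot N\bigr),
\]
where the first equality uses $\int_{0}^{1}\tfrac{d}{d\theta}F((1-\theta)A^{\#}+\theta\tilde{A}^{\#})\,d\theta$, valid by (\ref{39}) which keeps the segment in $U$.

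Next I would expand the left side. Differentiating the normal graph parametrization gives $\partial_{i}\tilde{X}=e_{i}-h A^{k}_{i}e_{k}+(\nabla_{i}h)N$, and normalizing the induced normal yields $\tilde{N}=N-\nabla_{\Sigma}h+\textrm{q.o.t.}$ The standard expansion of the Weingarten map for a normal graph therefore produces
\[
\tilde{A}^{j}_{i}-A^{j}_{i}\;=\;g^{jk}(\nabla^{2}_{\Sigma}h)_{ik}+h\,A^{j}_{k}A^{k}_{i}+R^{j}_{i},
\]
where $R^{j}_{i}$ collects terms that are at least quadratic in $(h,\nabla h,\nabla^{2}h)$ or carry an additional factor of $A^{\#}$ or $\nabla_{\Sigma}A^{\#}$. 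Contracting with $\bar{\mathtt{a}}^{i}_{j}$ and using the symmetry of $\nabla^{2}_{\Sigma}h$ to replace $\bar{\mathtt{a}}$ by its symmetrization $\mathbf{a}$, I obtain
\[
\sum_{i,j}\mathbf{a}^{ij}\nabla_{i}\nabla_{j}h\;=\;\nabla_{\Sigma}\!\cdot(\mathbf{a}\,dh)\;-\;(\nabla_{i}\mathbf{a}^{ij})\nabla_{j}h,
\]
so the left side equals $\nabla_{\Sigma}\!\cdot(\mathbf{a}\,dh)$ plus correction terms. By (\ref{41}) the divergence correction is $O(|X|^{-1})|\nabla_{\Sigma}h|$, and the term $\bar{\mathtt{a}}^{i}_{j}\,h A^{j}_{k}A^{k}_{i}$ is $O(|X|^{-2})|h|$ by (\ref{20}) and (\ref{40}).

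For the right side, I decompose $X=X^{T}+(X\cdot N)N$ with $X^{T}$ tangent to $\Sigma$ and expand
\[
\tilde{X}\cdot\tilde{N}-X\cdot N\;=\;X\cdot(\tilde{N}-N)+h\,N\cdot\tilde{N}\;=\;h-X\cdot\nabla_{\Sigma}h+\mathcal{R},
\]
where $\mathcal{R}$ is a finite sum of terms like $h^{2}$, $h\,\nabla h$, $(\nabla h)^{2}$, and a term of the form $h\,(X\cdot N)$ which is harmless because the shrinker equation forces $X\cdot N=-2F(A^{\#})=O(|X|^{-1})$. Multiplying by the $|X|$ that appears through $X\cdot\nabla_{\Sigma}h$ and invoking $|h|=O(|X|^{-1})$, $|\nabla_{\Sigma}h|=O(|X|^{-2})$, $|\nabla^{2}_{\Sigma}h|=O(|X|^{-3})$ from (\ref{21}) together with (\ref{20}), each term of $R^{j}_{i}$ and of $\mathcal{R}$ is absorbed into $O(|X|^{-1})|\nabla_{\Sigma}h|+O(|X|^{-2})|h|$. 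Combining the two expansions and moving $\tfrac{1}{2}(X\cdot\nabla_{\Sigma}h-h)$ to the left gives (\ref{43}).

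The principal obstacle is the bookkeeping in the expansions of $\tilde{A}^{j}_{i}-A^{j}_{i}$ and of $\tilde{X}\cdot\tilde{N}-X\cdot N$: each quadratic-or-higher term in $(h,\nabla h,\nabla^{2}h)$ must be checked to fit, after multiplication by at most one power of $|X|$ (from $X\cdot\nabla_{\Sigma}h$) and by the curvature factors controlled by (\ref{20}), into the target form $O(|X|^{-1})|\nabla_{\Sigma}h|+O(|X|^{-2})|h|$. The linearization portions are straightforward; the care required is purely in verifying this absorption using the sharp decay rates provided by Lemma \ref{l8} and Lemma \ref{l10}.
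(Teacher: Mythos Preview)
Your proposal is correct and follows essentially the same route as the paper: subtract the two self-shrinker equations, linearize $F(\tilde{A}^{\#})-F(A^{\#})$ by the mean value theorem to produce $\bar{\mathtt{a}}^{i}_{j}$, expand $\tilde{A}^{j}_{i}-A^{j}_{i}$ and $\tilde{X}\cdot\tilde{N}-X\cdot N$ for the normal graph $\tilde{X}=X+hN$, pass from $\bar{\mathtt{a}}$ to its symmetrization $\mathbf{a}$ using the symmetry of $\nabla^{2}_{\Sigma}h$, and absorb all remainders via (\ref{20}), (\ref{21}), (\ref{40}), (\ref{41}). The paper carries this out by fixing $\hat{X}$ and choosing coordinates that are simultaneously normal and principal at $\hat{X}$, which makes the bookkeeping in (\ref{44})--(\ref{48}) explicit; your coordinate-free description is the same computation (the remark about ``multiplying by the $|X|$ that appears through $X\cdot\nabla_{\Sigma}h$'' is a slight misstatement, since $X\cdot\nabla_{\Sigma}h$ is a leading term rather than a remainder, but this does not affect the argument).
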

\begin{proof}
Fix $\hat{X}\in\Sigma\setminus\bar{B}_{R}$ and take a local coordinate
$x=\left(x_{1},\cdots,\, x_{n}\right)$ of $\Sigma$ which is normal
and principal (w.r.t. $\Sigma$) at $\hat{X}=X\left(0\right)$. That
is 
\[
g_{ij}\Big|_{x=0}\,=\delta_{ij},\,\,\Gamma_{ij}^{k}\Big|_{x=0}=0,\,\, A_{i}^{j}\Big|_{x=0}=\kappa_{i}\delta_{ij}
\]
where $g_{ij}$ is the pull-back metric, $\Gamma_{ij}^{k}$ is the
Christoffel symbols and $A_{i}^{j}$ is the shape operator of $\Sigma$
at $X\left(x\right)$. Denote the principal direction of $\Sigma$
at $\hat{X}$ by
\[
\partial_{i}X\Big|_{x=0}\,=e_{i}
\]
Throughout the proof, we adopt the Einstein summation convention (i.e.
summing over repeated indices). Recall that we regard $\tilde{\Sigma}$
(outside a large ball) as a normal graph over $\Sigma\setminus\bar{B}_{R}$
and parametrize it by $\tilde{X}=X\left(x\right)+h(x)N(x)$. We then
want to compute some geometric quantities of $\tilde{\Sigma}$ in terms
of this local coordinate at $\tilde{X}\left(0\right)=\hat{X}+hN\Big|_{\hat{X}}$.
First, let's compute 
\[
\partial_{i}\tilde{X}\Big|_{x=0}\,=\left(\delta_{i}^{k}-A_{i}^{k}h\right)\partial_{k}X\,+\,\partial_{i}h\, N\Big|_{x=0}\,=\left(1-\kappa_{i}h\right)e_{i}+\nabla_{i}h\, N
\]
\begin{equation}
\partial_{ij}^{2}\tilde{X}\Big|_{x=0}\,=-\left(A_{i}^{k}\nabla_{j}h+A_{j}^{k}\nabla_{i}h+\nabla_{i}A_{j}^{k}\cdot h\right)e_{k}\,+\,\left(A_{ij}+\nabla_{ij}^{2}h-A_{ij}^{2}h\right)N\label{44}
\end{equation}
which (together with Lemma \ref{l8}) gives the metric of $\tilde{\Sigma}$,
its inverse and determinant as follows: 
\[
\tilde{g}_{ij}\Big|_{x=0}\,=\left(1-\kappa_{i}h\right)^{2}\delta_{ij}+\nabla_{i}h\,\nabla_{j}h=\left(1-\kappa_{i}h\right)^{2}\left(\delta_{ij}+\frac{\nabla_{i}h\,\nabla_{j}h}{\left(1-\kappa_{i}h\right)^{2}}\right)
\]
\begin{equation}
\tilde{g}^{ij}\Big|_{x=0}\,=\left(1-\kappa_{i}h\right)^{-2}\left(\delta_{ij}+\frac{\nabla_{i}h\,\nabla_{j}h}{\left(1-\kappa_{i}h\right)^{2}}\right)^{-1}\label{45}
\end{equation}
\[
=\left(1+2\kappa_{i}h\right)\delta^{ij}+\, O\left(|\hat{X}|^{-2}\right)|\nabla_{\Sigma}h|\,+O\left(|\hat{X}|^{-3}\right)|h|
\]
\[
\det\tilde{g}\Big|_{x=0}\,=\left(1-\kappa_{1}h\right)^{2}\cdots\left(1-\kappa_{n}h\right)^{2}\det\left(\delta_{ij}+\frac{\nabla_{i}h\,\nabla_{j}h}{\left(1-\kappa_{i}h\right)^{2}}\right)
\]
\[
=1-2Hh+\, O\left(|\hat{X}|^{-2}\right)|\nabla_{\Sigma}h|\,+O\left(|\hat{X}|^{-3}\right)|h|
\]
and also the unit-normal of $\tilde{\Sigma}$: 
\begin{equation}
\tilde{N}\Big|_{x=0}\,=\left(\det\tilde{g}\right)^{-\frac{1}{2}}\partial_{1}\tilde{X}\wedge\cdots\wedge\partial_{n}\tilde{X}\label{46}
\end{equation}
\[
=\left(\det\tilde{g}\right)^{-\frac{1}{2}}\left(-\sum_{i=1}^{n}\left(\nabla_{i}h\,\prod_{j\neq i}\left(1-\kappa_{j}h\right)\right)e_{i}\,+\,\left(1-\kappa_{1}h\right)\cdots\left(1-\kappa_{n}h\right)N\right)
\]
\[
=-\sum_{i=1}^{n}\left(1+\kappa_{i}h+O\left(|\hat{X}|^{-2}\right)|\nabla_{\Sigma}h|+O\left(|\hat{X}|^{-3}\right)|h|\right)\nabla_{i}h\cdot e_{i}
\]
\[
+\left(1+O\left(|\hat{X}|^{-2}\right)|\nabla_{\Sigma}h|+O\left(|\hat{X}|^{-3}\right)|h|\right)N
\]

By (\ref{44}), (\ref{45}), (\ref{46}) and Lemma \ref{l8}, we compute the
shape operator of $\tilde{\Sigma}$ at $\tilde{X}\left(0\right)$:

\begin{equation}
\tilde{A}_{i}^{j}\Big|_{x=0}\,=\tilde{A}_{ik\,}\tilde{g}^{kj}=\left(\partial_{ik}^{2}\tilde{X}\cdot\tilde{N}\right)\,\tilde{g}^{kj}\label{47}
\end{equation}

\noindent \resizebox{1.009\linewidth}{!}{
  \begin{minipage}{\linewidth}
  \begin{align*}
=\left(A_{ik}+\nabla_{ik}^{2}h+O\left(|\hat{X}|^{-2}\right)|\nabla_{\Sigma}h|\,+O\left(|\hat{X}|^{-2}\right)|h|\right)\left(\left(1+2\kappa_{j}h\right)\delta^{kj}+O\left(|\hat{X}|^{-2}\right)|\nabla_{\Sigma}h|\right)
\end{align*}
  \end{minipage}
}

\[
+\left(A_{ik}+\nabla_{ik}^{2}h+O\left(|\hat{X}|^{-2}\right)|\nabla_{\Sigma}h|\,+O\left(|\hat{X}|^{-2}\right)|h|\right)O\left(|\hat{X}|^{-3}\right)|h|
\]
\[
=A_{i}^{j}+\delta^{kj}\nabla_{ik}^{2}h+O\left(|\hat{X}|{}^{-2}\right)\left(|\nabla_{\Sigma}h|\,+\,|h|\right)
\]
and 
\begin{equation}
\tilde{X}\cdot\tilde{N}\Big|_{x=0}=X\cdot N\,-\, X\cdot\nabla_{\Sigma}h\,+h+O\left(|\hat{X}|{}^{-1}\right)|\nabla_{\Sigma}h|\,+O\left(|\hat{X}|{}^{-2}\right)|h|\label{48}
\end{equation}

Thus, in view of the $F$ self-shrinker equation satisfied by $\Sigma$
and $\tilde{\Sigma}$, we get
\begin{equation}
0=F\left(\tilde{A}^{\#}\right)-F\left(A^{\#}\right)\,+\,\frac{1}{2}\left(\tilde{X}\cdot\tilde{N}-X\cdot N\right)\Big|_{x=0}\label{49}
\end{equation}
\[
=\int_{0}^{1}\frac{\partial F}{\partial S_{i}^{j}}\left(\left(1-\theta\right)A^{\#}+\theta\tilde{A}^{\#}\right)\, d\theta\cdot\left(\tilde{A}_{i}^{j}-A_{i}^{j}\right)\,-\,\frac{1}{2}\left(X\cdot\nabla_{\Sigma}h-h\right)
\]
\[
+O\left(|\hat{X}|{}^{-1}\right)|\nabla_{\Sigma}h|\,+O\left(|\hat{X}|{}^{-2}\right)|h|
\]
\[
=\bar{\mathtt{a}}_{j}^{i}\delta^{jk}\nabla_{ik}^{2}h\,-\,\frac{1}{2}\left(X\cdot\nabla_{\Sigma}h\,-\, h\right)+O\left(|\hat{X}|{}^{-1}\right)|\nabla_{\Sigma}h|\,+O\left(|\hat{X}|{}^{-2}\right)|h|
\]
\[
=\bar{\mathtt{a}}^{ik}\nabla_{ik}^{2}h\,-\,\frac{1}{2}\left(X\cdot\nabla_{\Sigma}h\,-\, h\right)+O\left(|\hat{X}|{}^{-1}\right)|\nabla_{\Sigma}h|\,+O\left(|\hat{X}|{}^{-2}\right)|h|
\]
\[
=\langle\bar{\mathtt{a}},\,\nabla_{\Sigma}^{2}h\rangle\,-\,\frac{1}{2}\left(X\cdot\nabla_{\Sigma}h\,-\, h\right)+O\left(|\hat{X}|{}^{-1}\right)|\nabla_{\Sigma}h|\,+O\left(|\hat{X}|{}^{-2}\right)|h|
\]
Note that by the symmetry of the Hessian and Lemma  \ref{l10}, we have
\begin{equation}
\langle\bar{\mathtt{a}},\,\nabla_{\Sigma}^{2}h\rangle\,=\,\bar{\mathtt{a}}^{ij}\nabla_{ij}^{2}h=\frac{1}{2}\left(\bar{\mathtt{a}}^{ij}+\bar{\mathtt{a}}^{ji}\right)\nabla_{ij}^{2}h=\langle\mathbf{a},\,\nabla_{\Sigma}^{2}h\rangle\label{50}
\end{equation}
\[
=\nabla_{i}\left(\mathbf{a}^{ij}\nabla_{j}h\right)-\left(\nabla_{i}\mathbf{a}^{ij}\right)\nabla_{j}h=\nabla_{\Sigma}\cdot\left(\mathbf{a}\, dh\right)+\, O\left(|\hat{X}|{}^{-1}\right)|\nabla_{\Sigma}h|
\]
(\ref{43}) follows from combining (\ref{49}) and (\ref{50}).
\end{proof}
Our goal is to show that $h$ vanishes on $\Sigma\setminus\bar{B}_{R}$
for some $R\gg1$, which would be done in the next section through
Carleman's inequality. For that purpose, we first observe that for
each $t\in[-1,\,0)$, $\tilde{\Sigma}_{t}=\sqrt{-t}\tilde{\,\Sigma}$
is also a normal graph over $\Sigma_{t}\setminus\bar{B}_{R}$
and it can be parametrized as $\tilde{X}_{t}=X_{t}+h_{t}N_{t}$. For
the rest of this section, we would show that each $h_{t}=h\left(\cdot,\, t\right)$
satisfies a similar equation as that of $h\left(\cdot,\,-1\right)$ in
Proposition \ref{p11}. Due to the property that $\left\{ \Sigma_{t}\right\} _{-1\leq t<0}$
forms a $F$ curvature flow, it turns out that the evolution of $h_{t}$
satisfies a parabolic equation. We then give some estimates for the
coefficients of the parabolic equations (as in Lemma \ref{l10}) , which is
crucial for deriving the Carleman's inequality in the next section.

Now fix $t\in[-1,\,0)$ and define a 2-tensor $\mathbf{a}_{t}$ on
$\Sigma_{t}=\sqrt{-t}\,\Sigma$ as in Definition \ref{d9}. First, take a
local coordinate $x=\left(x_{1},\cdots,\, x_{n}\right)$ of $\Sigma_{t}$
(outside a large ball) so that $\Sigma_{t}$ and $\tilde{\Sigma}_{t}$
can be respectively parametrized as
\[
X_{t}=X_{t}\left(x\right),\quad\tilde{X}_{t}\left(x\right)=X_{t}\left(x\right)+h_{t}\left(x\right)N_{t}\left(x\right)
\]
We then define 
$$\bar{\mathtt{a}}_{t}^{ij}\left(x\right)=\sum_{k}\bar{\mathtt{a}}_{k}^{i}(x,\, t)\, g_{t}^{kj}\left(x\right)$$
$$\bar{\mathtt{a}}_{j}^{i}(x,\, t)=\int_{0}^{1}\frac{\partial F}{\partial S_{i}^{j}}\left(\left(1-\theta\right)A_{t}^{\#}\left(x\right)+\theta\tilde{A_{t}}^{\#}\left(x\right)\right)\, d\theta$$
and its symmetrization
\[
\mathbf{a}_{t}^{ij}\left(x\right)=\frac{1}{2}\left(\bar{\mathtt{a}}_{t}^{ij}\left(x\right)+\bar{\mathtt{a}}_{t}^{ji}\left(x\right)\right)
\]
where $g_{t}^{ij}\left(x\right)$ is the inverse of the pull-back
metric $g_{ij}\left(x,\, t\right)=\partial_{i}X_{t}\left(x\right)\cdot\partial_{j}X_{t}\left(x\right)$,
$A_{t}^{\#}\left(x\right)\sim A_{i}^{j}\left(x,\, t\right)=-\partial_{i}N_{t}\left(x\right)\cdot\partial_{j}X_{t}\left(x\right)$
is the shape operator of $\Sigma_{t}$ at $X_{t}\left(x\right)$ with
$N_{t}\left(x\right)$ being the unit-normal of $\Sigma_{t}$ at $X_{t}\left(x\right)$,
$\tilde{A_{t}}^{\#}\sim\tilde{A}_{i}^{j}\left(x,\, t\right)=-\partial_{i}\tilde{N_{t}}\left(x\right)\cdot\partial_{j}\tilde{X_{t}}\left(x\right)$
is the shape operator of $\tilde{\Sigma}_{t}$ at $\tilde{X}_{t}\left(x\right)$
with $\tilde{N}_{t}\left(x\right)$ being the unit-normal of $\tilde{\Sigma}_{t}$
at $\tilde{X}_{t}\left(x\right)$.

Then we have the following lemma, which is analogous to Proposition \ref{p11}:
\begin{lem}\label{l12}
There exits $R=R\left(\Sigma,\,\tilde{\Sigma},\mathcal{\, C},\, U,\,\parallel F\parallel_{C^{3}\left(U\right)},\,\lambda,\,\varkappa\right)\geq1$
such that for each $t\in[-1,\,0)$, the deviation $h_{t}$ satisfies
\begin{equation}
\nabla_{\Sigma_{t}}\cdot\left(\mathbf{a}_{t}\, dh_{t}\right)-\frac{1}{2\left(-t\right)}\left(X_{t}\cdot\nabla_{\Sigma_{t}}h_{t}\,-\, h_{t}\right)=O\left(|X_{t}|{}^{-1}\right)|\nabla_{\Sigma_{t}}h_{t}|\,+O\left(|X_{t}|{}^{-2}\right)|h_{t}|\label{51}
\end{equation}
for $X_{t}\in\Sigma_{t}\setminus\bar{B}_{R}$, where 
$$\nabla_{\Sigma_{t}}\cdot\left(\mathbf{a}_{t}\, dh_{t}\right)=\sum_{i,\, j}\nabla_{i}\left(\mathbf{a}_{t}^{ij}\nabla_{j}h_{t}\right)$$
and 
\[
\Big|O\left(|X_{t}|{}^{-1}\right)\Big|\,\leq C\left(n,\,\mathcal{C},\,\parallel F\parallel_{C^{3}\left(U\right)}\right)\,|X_{t}|{}^{-1}
\]
Also, we have 
\[
\parallel|X_{t}|\, h_{t}\parallel_{L^{\infty}(\Sigma_{t}\setminus\bar{B}_{R})}\,+\parallel|X_{t}|{}^{2}\nabla_{\Sigma_{t}}h_{t}\parallel_{L^{\infty}(\Sigma_{t}\setminus\bar{B}_{R})}\,+\parallel|X_{t}|{}^{3}\nabla_{\Sigma_{t}}^{2}h_{t}\parallel_{L^{\infty}(\Sigma_{t}\setminus\bar{B}_{R})}
\]
\begin{equation}
\leq C\left(n,\,\mathcal{C},\,\parallel F\parallel_{C^{3}\left(U\right)}\right)\left(-t\right)\label{52}
\end{equation}
\end{lem}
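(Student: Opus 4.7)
The plan is to deduce Lemma~\ref{l12} directly from Proposition~\ref{p11} and Lemma~\ref{l8} by exploiting the self-similar structure of the flow through the dilation $\Phi_\mu:X\mapsto\mu X$, where $\mu=\sqrt{-t}\in(0,1]$; aside from scaling bookkeeping, no new geometric input is required. Since $\Sigma_t=\Phi_\mu(\Sigma)$, $\tilde\Sigma_t=\Phi_\mu(\tilde\Sigma)$, and the unit normal is invariant under $\Phi_\mu$ while normal displacements scale linearly, the deviation satisfies $h_t(\mu X)=\mu\,h(X)$. Pulling everything back to the common parameter domain one reads off $g^t_{ij}=\mu^2 g_{ij}$, $(g_t)^{ij}=\mu^{-2}g^{ij}$, $\Gamma^k_{t,ij}=\Gamma^k_{ij}$, $|\nabla_{\Sigma_t}h_t|=|\nabla_\Sigma h|$, and $|\nabla_{\Sigma_t}^2 h_t|=\mu^{-1}|\nabla_\Sigma^2 h|$; since $\partial F/\partial S^j_i$ is $0$-homogeneous, the tensor $\bar{\mathtt a}^i_j$ is invariant, hence $\mathbf a_t^{ij}=\mu^{-2}\mathbf a^{ij}$.

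For the equation (\ref{51}), combining these rules with $\sqrt{\det g_t}=\mu^n\sqrt{\det g}$ yields $\nabla_{\Sigma_t}\cdot(\mathbf a_t\,dh_t)=\mu^{-1}\,\nabla_\Sigma\cdot(\mathbf a\,dh)$; the drift transforms as $\tfrac{1}{2(-t)}(X_t\cdot\nabla_{\Sigma_t}h_t-h_t)=\mu^{-1}\cdot\tfrac12(X\cdot\nabla_\Sigma h-h)$; and the two error terms on the right-hand side of (\ref{43}) also pick up a factor $\mu^{-1}$, so that (\ref{51}) is precisely $\mu^{-1}$ times (\ref{43}) and is inherited from Proposition~\ref{p11}. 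An equivalent and more concrete route is to repeat the proof of Proposition~\ref{p11} verbatim at the slice $\Sigma_t$, using the time-$t$ shrinker identity $F(A_t^\#)+\tfrac{X_t\cdot N_t}{2(-t)}=0$ for both $\Sigma_t$ and $\tilde\Sigma_t$; this produces the coefficient $\tfrac{1}{2(-t)}$ in place of $\tfrac12$ and nothing else, with the error estimates on the coefficients supplied by the analogue of Lemma~\ref{l10} (which in turn is inherited from (\ref{18})--(\ref{23}) by the same scaling).

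The decay estimate (\ref{52}) follows from the same scaling laws applied to (\ref{21}): each term acquires an extra factor $\mu^2=(-t)$. For example, $\||X_t|h_t\|_\infty=\mu^2\||X|h\|_\infty\le C(-t)$, $\||X_t|^2\nabla_{\Sigma_t}h_t\|_\infty=\mu^2\||X|^2\nabla_\Sigma h\|_\infty\le C(-t)$, and $\||X_t|^3\nabla_{\Sigma_t}^2 h_t\|_\infty=\mu^3\cdot\mu^{-1}\||X|^3\nabla_\Sigma^2 h\|_\infty\le C(-t)$. The only point requiring care is uniformity of $R$ in $t\in[-1,0)$, which is automatic: $\Phi_\mu^{-1}(\Sigma_t\setminus\bar B_R)=\Sigma\setminus\bar B_{R/\mu}\subseteq\Sigma\setminus\bar B_R$ for $\mu\le1$, so any $R$ that works for Lemma~\ref{l8} and Proposition~\ref{p11} handles every earlier time.

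I expect the main, essentially notational, obstacle to be keeping the placement of indices and the distinction between intrinsic and Euclidean norms straight under the rescaling, especially for the Hessian and the divergence $\nabla_{\Sigma_t}\cdot(\mathbf a_t\,dh_t)$. Once the transformation rules above are written out, however, Lemma~\ref{l12} reduces immediately to Proposition~\ref{p11} and Lemma~\ref{l8} with no further computation.
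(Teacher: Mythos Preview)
Your proposal is correct and takes essentially the same approach as the paper: both reduce Lemma~\ref{l12} to Proposition~\ref{p11} and Lemma~\ref{l8} by the dilation $X\mapsto\sqrt{-t}\,X$, noting that the left-hand side of (\ref{51}) is exactly $(\sqrt{-t})^{-1}$ times that of (\ref{43}) and that each term in (\ref{52}) picks up a factor $(-t)$ under this rescaling. Your write-up is in fact more explicit than the paper's about the individual scaling laws and about the uniformity of $R$ in $t$.
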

\begin{proof}
Fix $t\in[-1,\,0)$ and $\hat{X}_{t}\in\Sigma_{t}\setminus B_{R}$,
then we have $\hat{X}=\frac{\hat{X}_{t}}{\sqrt{-t}}\in\Sigma\setminus\bar{B}_{R}$
and 

\noindent \resizebox{1.009\linewidth}{!}{
  \begin{minipage}{\linewidth}
  \begin{align*}
\left(\nabla_{\Sigma_{t}}\cdot\left(\mathbf{a}_{t}\, dh_{t}\right)\,-\,\frac{1}{2\left(-t\right)}\left(X_{t}\cdot\nabla_{\Sigma_{t}}h_{t}\,-\, h_{t}\right)\right)\Big|_{\hat{X}_{t}}=\frac{1}{\sqrt{-t}}\left(\nabla_{\Sigma}\cdot\left(\mathbf{a}\, dh\right)\,-\,\frac{1}{2}\left(X\cdot\nabla_{\Sigma}h\,-\, h\right)\right)\Big|_{\hat{X}_{t}}
\end{align*}
  \end{minipage}
}

\[
=\frac{1}{\sqrt{-t}}\left(O\left(\hat{|X}|{}^{-1}\right)|\nabla_{\Sigma}h|\,+O\left(|\hat{X}|{}^{-2}\right)|h|\right)\Big|_{\hat{X}_{t}}
\]
\[
=\left(O\left(|\hat{X}_{t}|{}^{-1}\right)|\nabla_{\Sigma_{t}}h_{t}|\,+O\left(|\hat{X}_{t}|{}^{-2}\right)|h_{t}|\right)\Big|_{\hat{X}_{t}}
\]
Similarly, to derive (\ref{52}), it suffices to rescale (\ref{21})
to get

\[
|\hat{X}_{t}|\,|h_{t}|\,+\,|\hat{X}_{t}|{}^{2}|\nabla_{\Sigma_{t}}h_{t}|\,+\,|\hat{X}_{t}|{}^{3}|\nabla_{\Sigma_{t}}^{2}h_{t}|\Big|_{\hat{X}_{t}}
\]
\[
=(-t)\left(\hat{|X}|\,|h|\,+\,|\hat{X}|{}^{2}|\nabla_{\Sigma}h|\,+\,|\hat{X}|{}^{3}|\nabla_{\Sigma}^{2}h|\right)\Big|_{\hat{X}_{t}}
\]
\[
\leq C\left(n,\,\mathcal{C},\,\parallel F\parallel_{C^{3}\left(U\right)}\right)\left(-t\right)
\]
 
\end{proof}
Next, we define ``normal parametrizations'' of the flow:
\begin{defn}\label{d13}
$X_{t}=X\left(\cdot,\, t\right)$ is called a ``normal parametrization''
for the motion of a hypersurface $\left\{ \Sigma_{t}\right\} $ provided
that 
\[
\partial_{t}X_t=F\left(A^{\#}\right)N
\]
That is, each particle on the hypersurface moves in normal direction
during the flow (see also Definition \ref{d4}).
\end{defn}
In the derivation of the parabolic equation to be satisfied by $h_{t}=h\left(\cdot,\, t\right)$,
we start with a ``radial parametrization'' of the flow $\left\{ \Sigma_{t}\right\} _{-1\leq t<0}$
(i.e. each particles on the hypersurface moves in the radial direction
along the flow, see the proof of Proposition \ref{p14} for details),
then we make a transition to the ``normal parametrization'' by using
a time-dependent tangential diffeomorphism. Note that in general,
the ``radial parametrization'' exists only for a short period of
time (unlike the ``vertical parametrization''), so later in the
proof, we would do a local argument, which is sufficient for deriving the equation.
\begin{prop}\label{p14}
There exits $R=R\left(\Sigma,\,\tilde{\Sigma},\mathcal{\, C},\, U,\,\parallel F\parallel_{C^{3}\left(U\right)},\,\lambda,\,\varkappa\right)\geq1$
so that in the normal parametrization of the $F$ curvature flow $\left\{ \Sigma_{t}\right\} _{-1\leq t<0}$
, the deviation $h_{t}$ satisfies 
\begin{equation}
\mathbf{P}h\,\equiv\,\partial_{t}h-\nabla_{\Sigma_{t}}\cdot\left(\mathbf{a}\left(\cdot,\, t\right)\, dh\right)=O\left(|X_{t}|{}^{-1}\right)|\nabla_{\Sigma_{t}}h|\,+\, O\left(|X_{t}|{}^{-2}\right)|h|\label{53}
\end{equation}
\begin{equation}
h\left(\cdot,\,0\right)=0\quad\textrm{as}\; t\nearrow0\label{54}
\end{equation}
for $X_{t}\in\Sigma_{t}\setminus\bar{B}_{R},\,-1\leq t<0$, where
$\mathbf{a}\left(\cdot,\, t\right)=\mathbf{a}_{t}$.\end{prop}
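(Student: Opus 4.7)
The approach is to derive (\ref{53}) by turning the stationary identity of Lemma \ref{l12} into a parabolic equation via a change of parametrization. The essential point is that the \emph{radial} parametrization $X(x,t)=\sqrt{-t}\,X_{-1}(x)$ of $\{\Sigma_t\}$ has velocity $\partial_t X = -\frac{X}{2(-t)}$ whose normal component already equals $F(A^{\#})N$, thanks to the self-shrinker equation $F(A^{\#})+\frac{X\cdot N}{2(-t)}=0$; so it differs from the normal parametrization only by the tangential drift $-\frac{X^T}{2(-t)}$, and passing between the two merely produces a first-order transport term. This observation turns the time derivative of $h$ into $\partial_t h^R$ plus a transport correction, both of which have explicit expressions.

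I would first work in the radial parametrization. Because $\Sigma_t=\sqrt{-t}\,\Sigma$ and $\tilde\Sigma_t=\sqrt{-t}\,\tilde\Sigma$ share their unit normals with the corresponding $t=-1$ slice, the normal graph representation satisfies $h^{R}(x,t)=\sqrt{-t}\,h(x,-1)$, which immediately gives
\[
\partial_t h^{R} \;=\; -\frac{h_t}{2(-t)}.
\]
Next, using the bounds on $A_t^{\#}$ from (\ref{17})--(\ref{20}), I would solve locally the ODE $\partial_t Y = F(A^{\#})N$ to obtain a normal parametrization $Y(y,t)$ in a neighborhood of any given point of $\Sigma_t\setminus\bar B_R$. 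Since $h_t$ is intrinsically defined on $\Sigma_t$ and the two parametrizations differ at a common point only by the tangent vector $-\frac{X^T}{2(-t)}$, the chain rule yields
\[
\partial_t h^{R} \;=\; \partial_t h^{N} \;-\; \frac{X\cdot\nabla_{\Sigma_t} h}{2(-t)},
\]
and combining this with the previous display gives
\[
\partial_t h^{N} \;=\; \frac{1}{2(-t)}\bigl(X\cdot\nabla_{\Sigma_t} h - h_t\bigr).
\]

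Substituting this identity into Lemma \ref{l12} produces $\partial_t h^{N} - \nabla_{\Sigma_t}\cdot(\mathbf{a}_t\,dh_t) = O(|X_t|^{-1})|\nabla_{\Sigma_t} h| + O(|X_t|^{-2})|h|$, which is exactly (\ref{53}). The initial condition (\ref{54}) then follows from the estimate $|h_t|\leq C(-t)|X_t|^{-1}$ in (\ref{52}), showing that $h_t\to 0$ pointwise as $t\nearrow 0$ (uniformly on compact subsets of $\mathcal{C}\setminus\{O\}$ away from the origin). The main technical nuisance is not analytic but geometric bookkeeping: the tangential diffeomorphism carrying the radial parametrization to the normal one is obtained by integrating a tangent vector field on $\Sigma_t$ and may only exist locally in space; but since (\ref{53}) is a pointwise identity, as the author anticipates, a local normal parametrization around each point of $\Sigma_t\setminus\bar B_R$ is enough, and the stated bounds depend only on the already-established estimates of Lemma \ref{l8}, Lemma \ref{l10}, and Lemma \ref{l12}.
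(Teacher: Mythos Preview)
Your proposal is correct and follows essentially the same route as the paper: both use the radial parametrization $X(x,t)=\sqrt{-t}\,X_{\hat t}(x)$, decompose its velocity into $F(A^{\#})N$ plus the tangential drift $-\frac{X^{\top}}{2(-t)}$ via the self-shrinker equation, pass to the normal parametrization by integrating the tangential vector field, and then combine $\partial_t h^{R}=-h/(2(-t))$ with the chain rule to obtain $\partial_t h^{N}=\frac{1}{2(-t)}(X\cdot\nabla_{\Sigma_t}h-h)$, which Lemma~\ref{l12} converts into (\ref{53}); (\ref{54}) comes from (\ref{52}) in both. The only cosmetic difference is that the paper fixes an arbitrary base time $\hat t$ (rather than $-1$) and solves the ODE for the tangential diffeomorphism $\varphi_t$ explicitly, whereas you phrase it as solving for the normal parametrization directly---these are equivalent.
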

\begin{proof}
Fix $\hat{t}\in[-1,\,0)$, $\hat{X}\in\Sigma_{\hat{t}}\setminus\bar{B}_{R}$,
and take a local coordinate $x=\left(x_{1},\cdots,\, x_{n}\right)$
of $\Sigma_{\hat{t}}$ around $\hat{X}$. Define the ``radial parametrization''
of the flow starting at time $\hat{t}$ near the point $\hat{X}$
by
\[
X(x,\, t)\,=\,\frac{\sqrt{-t}}{\sqrt{-\hat{t}}}\, X_{\hat{t}}(x)
\]
For this parametrization, we can decompose the velocity vector into
the normal part and the tangential part as follows: 
\begin{equation}
\partial_{t}X(x,\, t)=\,\frac{-1}{2\sqrt{-\hat{t}}\sqrt{-t}}X_{\hat{t}}(x)\label{55}
\end{equation}
\[
=\frac{-1}{2\sqrt{-\hat{t}}\sqrt{-t}}\left(\left(X_{\hat{t}}\left(x\right)\cdot N_{\hat{t}}\left(x\right)\right)N_{\hat{t}}\left(x\right)\,+\,\sum_{i,\, j}g_{\hat{t}}^{ij}\left(x\right)\,\left(X_{\hat{t}}\left(x\right)\cdot\partial_{j}X_{\hat{t}}\left(x\right)\right)\,\partial_{i}X_{\hat{t}}\left(x\right)\right)
\]
\[
=F\left(A_{i}^{j}(x,\, t)\right)N\left(x,\, t\right)-\,\sum_{i,\, j}\frac{1}{2\left(-t\right)}g^{ij}\left(x,\, t\right)\,\left(X\left(x,\, t\right)\cdot\partial_{j}X\left(x,\, t\right)\right)\,\partial_{i}X\left(x,\, t\right)
\]
in which we use the $F$ self-shrinker equation of $\Sigma_{\hat{t}}=\sqrt{-\hat{t}}\,\Sigma$
(in Definition \ref{d4}) and the homogeneity of $F$. Now consider
the following ODE system:
\begin{equation}
\partial_{t}x_{i}=\sum_{i,\, j}\frac{1}{2(-t)}\, g^{ij}\left(x,\, t\right)\,\left(X\left(x,\, t\right)\cdot\partial_{j}X\left(x,\, t\right)\right)\label{56}
\end{equation}
\[
x_{i}\Big|_{t=\hat{t}}=\xi_{i},\,\, i=1,\cdots,\, n
\]
Let the solution (which exists at least for a while) to be $x=\varphi_{t}\left(\xi\right)$.
In other words, $\varphi_{t}$ is the local diffeomorphism on $\Sigma_{t}$
generated by the tangent vector field $\frac{1}{2\left(-t\right)}X\left(x,\, t\right)^{\top}$.
By (\ref{55}) and (\ref{56}), the reparametrization $X\left(\varphi_{t}\left(\xi\right),\, t\right)$
of the flow becomes a normal parametrization. 

On the other hand, in the radial parametrization, we have
$$h(x,\, t)=\frac{\sqrt{-t}}{\sqrt{-\hat{t}}}h_{\hat{t}}(x)$$
Thus, by (\ref{56}) and Lemma  \ref{l12}, we get 

\noindent \resizebox{1.009\linewidth}{!}{
  \begin{minipage}{\linewidth}
  \begin{align*}
\frac{\partial}{\partial t}\left\{ h\left(\varphi_{t}\left(\xi\right),t\right)\right\} =\partial_{t}h\left(x,\, t\right)+\sum_{i,\, j}\frac{1}{2\left(-t\right)}\, g^{ij}\left(x,\, t\right)\,\left(X\left(x,\, t\right)\cdot\partial_{j}X\left(x,\, t\right)\right)\,\partial_{i}h\left(x,\, t\right)\Big|_{x=\varphi_{t}\left(\xi\right)}
\end{align*}
  \end{minipage}
}

\[
=\frac{1}{2\left(-t\right)}\left\{ -h\left(x,\, t\right)+\, X\left(x,\, t\right)\cdot\nabla_{\Sigma_{t}}h\right\} \Big|_{x=\varphi_{t}\left(\xi\right)}
\]
\[
=\nabla_{\Sigma_{t}}\cdot\left(\mathbf{a}\left(\cdot,\, t\right)\, dh_{t}\right)\,+\, O\left(|X_{t}|{}^{-1}\right)|\nabla_{\Sigma_{t}}h_{t}|\,+\, O\left(|X_{t}|{}^{-2}\right)|h_{t}|\Big|{}_{x=\varphi_{t}\left(\xi\right)}
\]
which proves (\ref{53}).\\
(\ref{54}) follows from (\ref{52}).
\end{proof}
Lastly, we conclude this section by giving some estimates on the 2-tensor
$\mathbf{a}\left(\cdot,\, t\right)$ for each time-slice $\Sigma_{t}$.
\begin{prop}\label{p15}
There exits $R=R\left(\Sigma,\,\tilde{\Sigma},\mathcal{\, C},\, U,\,\parallel F\parallel_{C^{3}\left(U\right)},\,\lambda,\,\varkappa\right)\geq1$
so that for $t\in[-1,\,0)$, $X_{t}\in\Sigma_{t}\setminus\bar{B}_{R}$,
there hold 
\begin{equation}
\frac{\lambda}{3}\,\leq\mathbf{a}\left(\cdot,\, t\right)\leq\,\frac{3}{\lambda}\label{57}
\end{equation}
\begin{equation}
|X_{t}|\Big|\nabla_{\Sigma_{t}}\mathbf{a}\left(\cdot,\, t\right)\Big|\,\leq3\varkappa\label{58}
\end{equation}
\begin{equation}
|X_{t}|{}^{2}\Big|\nabla_{\Sigma_{t}}^{2}\mathbf{a}\left(\cdot,\, t\right)\Big|\,\leq C\left(n,\,\mathcal{C},\,\parallel F\parallel_{C^{3}\left(U\right)}\right)\label{59}
\end{equation}

\end{prop}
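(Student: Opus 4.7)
My plan is to reduce all three bounds to the corresponding estimates (\ref{40})--(\ref{42}) already established in Lemma \ref{l10}, using the self-similar rescaling map $\Phi_{t}:\Sigma\to\Sigma_{t}$, $X\mapsto\sqrt{-t}\,X$. The basic scaling identities I would record up front are: the induced metric obeys $\Phi_{t}^{*}g_{t}=(-t)\,g$; the shape operators obey $\Phi_{t}^{*}A_{t}^{\#}=(-t)^{-1/2}A^{\#}$ and $\Phi_{t}^{*}\tilde{A}_{t}^{\#}=(-t)^{-1/2}\tilde{A}^{\#}$ as endomorphisms (both being restatements of $\Sigma_{t}=\sqrt{-t}\,\Sigma$ and $\tilde{\Sigma}_{t}=\sqrt{-t}\,\tilde{\Sigma}$); and of course $|X_{t}|=\sqrt{-t}\,|X|$.

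The crucial observation is that $\partial F/\partial S_{i}^{j}$ is homogeneous of degree $0$ (since $F$ is degree one), so the integrand defining $\bar{\mathtt{a}}$ is invariant under the uniform scaling $A^{\#}\mapsto(-t)^{-1/2}A^{\#}$. Hence as endomorphisms of the tangent bundle $\Phi_{t}^{*}\bar{\mathtt{a}}_{t}=\bar{\mathtt{a}}$, and after raising an index with $g_{t}^{-1}$ and symmetrizing this becomes $\Phi_{t}^{*}\mathbf{a}_{t}=(-t)\,\mathbf{a}$ as a $(0,2)$-tensor. The two-sided bound (\ref{57}), interpreted as $\frac{\lambda}{3}g_{t}\le\mathbf{a}_{t}\le\frac{3}{\lambda}g_{t}$, pulls back under $\Phi_{t}$ to $\frac{\lambda}{3}(-t)g\le(-t)\mathbf{a}\le\frac{3}{\lambda}(-t)g$, which is exactly (\ref{40}).

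For (\ref{58}) and (\ref{59}) I would track how $|\nabla_{\Sigma_{t}}^{m}\mathbf{a}_{t}|_{g_{t}}$ scales. Since $\Phi_{t}^{*}\nabla_{\Sigma_{t}}^{m}\mathbf{a}_{t}=\nabla_{\Sigma}^{m}\bigl(\Phi_{t}^{*}\mathbf{a}_{t}\bigr)=(-t)\,\nabla_{\Sigma}^{m}\mathbf{a}$, and since measuring the norm of a $(0,2+m)$ tensor with $g_{t}^{-1}$ contributes one factor of $(-t)^{-1}$ per slot, one obtains $|\nabla_{\Sigma_{t}}^{m}\mathbf{a}_{t}|_{g_{t}}\circ\Phi_{t}=(-t)^{-m/2}|\nabla_{\Sigma}^{m}\mathbf{a}|_{g}$. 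Multiplying by $|X_{t}|^{m}=(-t)^{m/2}|X|^{m}$ produces an exact cancellation $|X_{t}|^{m}|\nabla_{\Sigma_{t}}^{m}\mathbf{a}_{t}|_{g_{t}}=|X|^{m}|\nabla_{\Sigma}^{m}\mathbf{a}|_{g}$ at corresponding points, so (\ref{58})--(\ref{59}) follow directly from (\ref{41})--(\ref{42}).

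The argument is essentially dimensional and carries no real obstacle; the one point to handle carefully is the bookkeeping of the powers of $-t$ contributed by the metric slots when computing the norm of $\nabla^{m}\mathbf{a}_{t}$, and this is entirely parallel to the rescaling identity already used in the proof of Lemma \ref{l12}.
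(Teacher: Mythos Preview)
Your proposal is correct and is precisely the argument the paper uses: the paper's own proof of (\ref{57})--(\ref{59}) is the single sentence ``By using the rescaling argument and the homogeneity of the derivatives of $F$, (\ref{57}), (\ref{58}), (\ref{59}) follow from (\ref{40}), (\ref{41}), (\ref{42}), respectively,'' and you have spelled out exactly that rescaling argument with the correct bookkeeping of the powers of $-t$. The one small remark is that in the paper $\mathbf{a}$ is written with upper indices as a $(2,0)$-tensor and the inequality (\ref{57}) is meant against $g_{t}^{-1}$, but lowering both indices as you do is of course equivalent and leads to the same scaling law $|X_{t}|^{m}|\nabla_{\Sigma_{t}}^{m}\mathbf{a}_{t}|_{g_{t}}=|X|^{m}|\nabla_{\Sigma}^{m}\mathbf{a}|_{g}$.
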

\begin{equation}
|X_{t}|{}^{2}\Big|\partial_{t}\mathbf{a}\left(\cdot,\, t\right)\Big|\,\leq C\left(n,\,\mathcal{C},\,\parallel F\parallel_{C^{3}\left(U\right)}\right)\label{60}
\end{equation}
where the time derivative in the last term is taken with respect to
a normal parametrization of the flow $\left\{ \Sigma_{t}\right\} _{-1\leq t<0}$.
\begin{proof}
We adopt the Einstein summation convention throughout the proof. 

By using the rescaling argument and the homogeneity of the derivatives
of $F$, (\ref{57}), (\ref{58}), (\ref{59}) follow from (\ref{40}),
(\ref{41}), (\ref{42}), respectively. As for (\ref{60}), note that
in a normal parametrization, we have 
\begin{equation}
\partial_{t}\bar{\mathtt{a}}^{ij}\left(t\right)=\partial_{t}\left(\bar{\mathtt{a}}_{k}^{i}(t)\, g_{t}^{kj}\right)=\left(\partial_{t}\bar{\mathtt{a}}_{k}^{i}(t)\right)g_{t}^{kj}+2\bar{\mathtt{a}}_{k}^{i}(t)\, F\left(A_{t}^{\#}\right)A_{t}^{kj}\label{61}
\end{equation}
in which we use the following evolution equation for the metric along
the $F$ curvature flow $\left\{ \Sigma_{t}\right\} _{-1\leq t<0}$
(see \cite{A}):
\begin{equation}
\partial_{t}g_{ij}\left(t\right)=-2F\left(A_{t}^{\#}\right)\, A_{ij}\left(t\right),\quad\partial_{t}g_{t}^{ij}=2F\left(A_{t}^{\#}\right)\, A_{t}^{ij}\label{62}
\end{equation}
By the rescaling argument, (\ref{17}), and the homogeneity of $F$
and its derivatives, we can estimate each term in (\ref{61}) by 
\[
|X_{t}|{}^{2}\Big|F\left(A_{t}^{\#}\right)A_{t}^{ij}\Big|\,=\,\Big|\, F\left(|X_{t}|\, A_{t}^{\#}\right)\cdot|X_{t}|\, A_{t}^{ij}\,\Big|\,\leq C\left(n,\,\mathcal{C},\,\parallel F\parallel_{C^{3}\left(U\right)}\right)
\]
and 
\[
|X_{t}|{}^{2}|\partial_{t}\bar{\mathtt{a}}_{j}^{i}|\,=\,|X_{t}|{}^{2}\Big|\int_{0}^{1}\frac{\partial^{2}F}{\partial S_{i}^{j}\partial S_{k}^{l}}\left(\left(1-\theta\right)A_{t}^{\#}+\theta\tilde{A_{t}}^{\#}\right)\cdot\left(\left(1-\theta\right)\partial_{t}A_{k}^{l}+\theta\partial_{t}\tilde{A}_{k}^{l}\right)\, d\theta\Big|
\]

\noindent \resizebox{1.009\linewidth}{!}{
  \begin{minipage}{\linewidth}
  \begin{align*}
=\Big|\int_{0}^{1}\frac{\partial^{2}F}{\partial S_{i}^{j}\partial S_{k}^{l}}\left(\left(1-\theta\right)|X_{t}|\, A_{t}^{\#}+\theta|X_{t}|\,\tilde{A_{t}}^{\#}\right)\cdot\left(\left(1-\theta\right)|X_{t}|^{3}\partial_{t}A_{k}^{l}+\theta|X_{t}|{}^{3}\partial_{t}\tilde{A}_{k}^{l}\right)\, d\theta\Big|
\end{align*}
  \end{minipage}
}

\[
\leq C\left(n,\,\mathcal{C},\,\parallel F\parallel_{C^{3}\left(U\right)}\right)\Big|\int_{0}^{1}\left(\left(1-\theta\right)|X_{t}|{}^{3}\partial_{t}A_{k}^{l}+\theta|X_{t}|{}^{3}\partial_{t}\tilde{A}_{k}^{l}\right)\, d\theta\Big|
\]
Thus, to establish (\ref{60}), it suffices to show that 
\begin{equation}
|X_{t}|{}^{3}|\partial_{t}A_{t}^{\#}|\,\leq C\left(n,\,\mathcal{C},\,\parallel F\parallel_{C^{3}\left(U\right)}\right)\label{63}
\end{equation}
\begin{equation}
|X_{t}|^{3}|\partial_{t}\tilde{A}_{t}^{\#}-\partial_{t}A_{t}^{\#}|\,\leq C\left(n,\,\mathcal{C},\,\parallel F\parallel_{C^{3}\left(U\right)}\right)\label{64}
\end{equation}
for all $X_{t}\in\Sigma_{t}\setminus\bar{B}_{R}$, $t\in[-1,\,0)$.

Firstly, let's recall the evolution equation for the shape operator
$A_{t}^{\#}$ in the normal parametrization along the flow (see \cite{A}):
\begin{equation}
\partial_{t}A_{i}^{j}(t)=\frac{\partial F}{\partial S_{k}^{l}}\left(A_{t}^{\#}\right)\cdot g_{t}^{lm}\nabla_{km}^{2}A_{i}^{j}+\frac{\partial F}{\partial S_{k}^{l}}\left(A_{t}^{\#}\right)\cdot\left(A_{t}^{2}\right)_{k}^{l}A_{i}^{j}(t)\label{65}
\end{equation}
\[
+\frac{\partial^{2}F}{\partial S_{k}^{l}\partial S_{p}^{q}}\left(A_{t}^{\#}\right)\cdot g_{t}^{jm}\nabla_{i}A_{k}^{l}(t)\nabla_{m}A_{p}^{q}(t)
\]
which yields (\ref{63}) by the rescaling argument, (\ref{20}) and
the homogeneity of $F$ and its derivatives.

Secondly, we would compute $\partial_{t}\left(\tilde{A}_{t}^{\#}-A_{t}^{\#}\right)$
in the normal parametrization (of $\left\{ \Sigma_{t}\right\} _{-1\leq t<0}$)
by using the same trick as in the proof of Proposition \ref{p14}. Fix $\hat{t}\in[-1,\,0)$,
$\hat{X}\in\Sigma_{\hat{t}}\setminus\bar{B}_{R}$, and take a local
coordinate $x=\left(x_{1},\cdots,\, x_{n}\right)$ of $\Sigma_{\hat{t}}$
which is normal at $\hat{X}=X\left(0\right)$. Consider the radial
parametrization of the flow starting at time $\hat{t}$ near the point
$\hat{X}$ by 
$$X(x,\, t)=\frac{\sqrt{-t}}{\sqrt{-\hat{t}}}\, X_{\hat{t}}(x)$$
Then we have 
\[
\tilde{A}_{i}^{j}\left(x,\, t\right)-A_{i}^{j}\left(x,\, t\right)=\frac{\sqrt{-\hat{t}}}{\sqrt{-t}}\,\left(\tilde{A}_{i}^{j}\left(x,\,\hat{t}\right)-A_{i}^{j}\left(x,\,\hat{t}\right)\right)
\]
Let $x=\varphi_{t}\left(\xi\right)$ with $\varphi_{\hat{t}}=\textrm{id}$
to be the local diffeomorphism on $\Sigma_{t}$ generated by the tangent
vector field $\frac{1}{2\left(-t\right)}\, X\left(\cdot,\, t\right)^{\top}$
as before. Then the reparametrization $X\left(\varphi_{t}\left(\xi\right),\, t\right)$
of the flow becomes a normal parametrization and we have 
\begin{equation}
\partial_{t}\left(\tilde{A}_{i}^{j}\left(\varphi_{t}\left(\xi\right),\, t\right)-A_{i}^{j}\left(\varphi_{t}\left(\xi\right),\, t\right)\right)\Big|_{\xi=0,\, t=\hat{t}}=\left(\partial_{t}\tilde{A}_{i}^{j}-\partial_{t}A_{i}^{j}\right)\left(\varphi_{t}\left(\xi\right),\, t\right)\label{66}
\end{equation}

\noindent \resizebox{1.009\linewidth}{!}{
  \begin{minipage}{\linewidth}
  \begin{align*}
+\frac{1}{2\left(-t\right)}g^{kl}\left(\varphi_{t}\left(\xi\right),\, t\right)\,\left(X_{t}\left(\varphi_{t}\left(\xi\right),\, t\right)\cdot\partial_{l}X_{t}\left(\varphi_{t}\left(\xi\right),\, t\right)\right)\left(\partial_{k}\tilde{A}_{i}^{j}\left(\varphi_{t}\left(\xi\right),\, t\right)-\partial_{k}A_{i}^{j}\left(\varphi_{t}\left(\xi\right),\, t\right)\right)\Big|_{\xi=0,\, t=\hat{t}}
\end{align*}
  \end{minipage}
}

\[
=\frac{1}{2\left(-\hat{t}\right)}\left\{ \left(\tilde{A}_{i}^{j}(\hat{t})-A_{i}^{j}(\hat{t})\right)+\, g_{\hat{t}}^{kl}\left(X_{\hat{t}}\cdot\partial_{l}X_{\hat{t}}\right)\left(\nabla_{k}\tilde{A}_{i}^{j}(\hat{t})-\nabla_{k}A_{i}^{j}(\hat{t})\right)\right\} \Big|_{\hat{X}}
\]
Note that for each $t\in[-1,\,0)$, by the rescaling argument and
(\ref{22}), we have
\begin{equation}
\parallel|X_{t}|{}^{3}\left(\tilde{A_{t}}^{\#}-A_{t}^{\#}\right)\parallel_{L^{\infty}(\Sigma_{t}\setminus\bar{B}_{R})}+\parallel|X_{t}|{}^{4}\left(\nabla_{\Sigma_{t}}\tilde{A_{t}}^{\#}-\nabla_{\Sigma_{t}}A_{t}^{\#}\right)\parallel_{L^{\infty}(\Sigma_{t}\setminus\bar{B}_{R})}\label{67}
\end{equation}
\[
\leq\left\{ \parallel|X|^{3}\left(\tilde{A}^{\#}-A^{\#}\right)\parallel_{L^{\infty}(\Sigma\setminus\bar{B}_{R})}+\parallel|X|{}^{4}\left(\nabla_{\Sigma}\tilde{A}^{\#}-\nabla_{\Sigma}A^{\#}\right)\parallel_{L^{\infty}(\Sigma\setminus\bar{B}_{R})}\right\} \left(-t\right)
\]
\[
\leq C\left(n,\,\mathcal{C},\,\parallel F\parallel_{C^{3}\left(U\right)}\right)(-t)
\]
Combining (\ref{66}) and (\ref{67}) to get (\ref{64}).
\end{proof}

\vspace{.5in}
\section{Carleman's inequalities and uniqueness of $F$ self-shrinkers
with a tangent cone at infinity}

This section is a continuation of the previous section. Here we still
assume that $\Sigma^n$ and $\tilde{\Sigma}^n$ are properly embedded
$F$ self-shrinkers (in Definition \ref{d4}) which are $C^{5}$ asymptotic
to the cone $\mathcal{C}^n$ at infinity, and they induce $F$ curvature
flows $\left\{ \Sigma_{t}\right\} _{-1\leq t\leq0}$ and $\left\{ \tilde{\Sigma}_{t}\right\} _{-1\leq t\leq0}$,
 where $\Sigma_{t}=\sqrt{-t}\,\Sigma,\;\tilde{\Sigma}_{t}=\sqrt{-t}\tilde{\,\Sigma}$
for $t\in[-1,\,0)$ and $\Sigma_{0}=\mathcal{C}=\tilde{\Sigma}_{0}$.
We also consider the deviation $h_{t}=h\left(\cdot,\, t\right)$ of
$\tilde{\Sigma}_{t}$ from $\Sigma_{t}$ for $t\in\left[-1,\,0\right]$
(we set $h_{0}=0$), which is defined on $\Sigma_{t}\setminus\bar{B}_{R}$ (see Lemma \ref{l8}),
where $R\gg1$ (depending on $\Sigma,\,\tilde{\Sigma},\mathcal{\, C},\, U,\,\parallel F\parallel_{C^{3}\left(U\right)},\,\lambda,\,\varkappa$).
For the function $h$, recall that we have Proposition \ref{p14} and Proposition
\ref{p15}. The Einstein summation convention is adopted throughout
this section (i.e. summing over repeated indices).

At the beginning, we would improve the rate of decay of $h_{t}$
as $t\nearrow0$ in (\ref{52}) to be exponential. To achieve that,
we need Proposition \ref{p20}, which is due to \cite{EF} and \cite{N} for
different cases. The proof of Proposition \ref{p20} would be included here
for readers' convenience, and it is based on two crucial lemmas (which we state without proof). The
first one is the mean value inequality for parabolic equations from \cite{LSU}.

\begin{lem}[Mean value inequality]\label{l16}

Let $P=\partial_{t}-\partial_{i}\left(a^{ij}\left(x,\, t\right)\,\partial_{j}\right)$
be a differential operator such that $a_{t}^{ij}=a^{ij}(\cdot,\, t)\in C^{1}\left(B_{1}^{n}\right)$
for $t\in\left[-1,\,0\right]$, $a^{ij}=a^{ji}$, and
\[
\boldsymbol{\lambda}\delta^{ij}\leq a^{ij}\leq\frac{1}{\boldsymbol{\lambda}}\delta^{ij}
\]
\[
|a^{ij}\left(x,\, t\right)-a^{ij}\left(\tilde{x},\,\tilde{t}\right)|\,\leq L\left(|x-\tilde{x}|\,+\,|t-\tilde{t}|{}^{\frac{1}{2}}\right)
\]
for some $\boldsymbol{\lambda}\in(0,\,1]$, $L>0$, where $B_{1}^{n}=\left\{ x\in\mathbb{R}^{n}\Big|\,|x|<1\right\} $. 

Suppose that $\mathrm{u}\in C^{2,1}\left(B_{1}^{n}\times\left[-T,\,0\right]\right)$
satisfies
\[
|P\mathscr{\mathrm{u}}|\,\leq\, L\left(\frac{1}{\sqrt{T}}|\partial_{x}\mathscr{\mathrm{u}}|\,+\,\frac{1}{T}|\mathscr{\mathrm{u}}|\right)
\]
for some $T\in(0,\,1]$, then there holds
\[
|\mathscr{\mathrm{u}}\left(x,\, t\right)|\,+\,\sqrt{-t}\,|\partial_{x}\mathscr{\mathrm{u}}\left(x,\, t\right)|\,\leq C\left(n,\,\boldsymbol{\lambda},\, L\right)\,\fint_{Q\left(x,\, t;\,\sqrt{-t}\right)}\,|\mathscr{\mathrm{u}}|
\]
for $\left(x,\, t\right)\in Q\left(0,\,0;\,\frac{\sqrt{T}}{2}\right)$, where
$Q\left(x,\, t;\, r\right)=B_{r}^{n}\left(x\right)\times(t-r^{2},\,t]$
is the parabolic cylinder centered at $\left(x,\, t\right)$ and $\fint_{\mathcal{D}}$
means taking the average of a function over the region $\mathcal{D}$.\end{lem}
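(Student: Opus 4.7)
The plan is to prove this as a standard parabolic Moser-type mean value estimate for subsolutions of a linear uniformly parabolic operator with Lipschitz coefficients, followed by a Caccioppoli-type gradient estimate. The first step is a parabolic rescaling $\tilde{u}(y,s) = u(\sqrt{T}y, Ts)$, which reduces to the case $T = 1$: the hypothesis $|Pu| \leq L(T^{-1/2}|\partial_x u| + T^{-1}|u|)$ becomes $|\tilde{P}\tilde{u}| \leq L(|\partial_y \tilde{u}| + |\tilde{u}|)$ with the same constant, and the rescaled coefficients retain the same ellipticity and Lipschitz continuity bounds.

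The core is Moser iteration. I would multiply the equation against $|u|^{p-2}u\,\eta^{2}$ for a parabolic cutoff $\eta$ on nested cylinders, integrate by parts, and use the uniform ellipticity of $a^{ij}$ to isolate a good energy term, absorbing the right-hand side contributions $L|\partial_y u|$ and $L|u|$ via Young's inequality into the energy and an $L^p$ term. Combined with the parabolic Sobolev embedding at exponent $2(n+2)/n$, this yields a reverse-Hölder-type inequality between $L^{pq}$ and $L^p$ mean values on slightly smaller cylinders. Iterating over a geometric sequence of radii $r_k \searrow r_\infty$ gives the sup-$L^2$ estimate
\[
\sup_{Q(0,0; r_\infty)} |u| \leq C(n,\boldsymbol{\lambda},L)\,\Big(\fint_{Q(0,0; 2r_\infty)} |u|^{2}\Big)^{1/2}.
\]
To pass from the $L^2$ mean to the $L^1$ mean on the right, I would invoke the Bombieri--Giusti interpolation trick (adapted to the parabolic setting), which trades Moser $L^p$ bounds against $L^1$ averages by using an $L^\varepsilon$-to-$L^\infty$ estimate chained with a logarithmic weak-Harnack estimate for the nonnegative shift $|u|+\delta$. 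Finally, one translates the base point from the origin to arbitrary $(x,t) \in Q(0,0;\sqrt{T}/2)$ and notes that $Q(x,t;\sqrt{-t}) \subset Q(0,0;\sqrt{T})$ lies in the domain where the rescaled hypotheses hold.

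For the gradient term $\sqrt{-t}\,|\partial_x u(x,t)|$, the plan is to apply the Caccioppoli inequality with $p = 2$ on the half-cylinder $Q(x,t;\sqrt{-t}/2)$ and combine it with the pointwise sup bound on $u$ over $Q(x,t;\sqrt{-t})$ already produced in the previous step; the factor $\sqrt{-t}$ emerges from the natural parabolic scaling of the derivative in a cylinder of radius $\sqrt{-t}$. The principal obstacle is the $L^2$-to-$L^1$ reduction, since direct Moser iteration only yields $L^p$ means for $p \geq 2$, and one must track the dependence on the Lipschitz constant $L$ uniformly through the iteration and through the Bombieri--Giusti step so that the final constant depends only on $n$, $\boldsymbol{\lambda}$, and $L$.
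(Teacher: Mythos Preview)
The paper does not actually prove this lemma: it states the result as borrowed from \cite{LSU}, and the only argument it supplies is Remark~\ref{r17}, which performs exactly the parabolic rescaling $(x,t)=(\sqrt{T}\,\widehat{x},T\,\widehat{t})$ you describe to reduce to $T=1$ and then defers to ``standard estimates from \cite{LSU}.'' So your rescaling step matches the paper precisely, and the Moser iteration you outline is a legitimate way to reconstruct what \cite{LSU} provides for the $\sup$--$L^1$ bound on $u$.

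There is, however, a gap in your plan for the gradient term. A Caccioppoli inequality on $Q(x,t;\sqrt{-t}/2)$ only controls $\int |\nabla u|^2$ by $(-t)^{-1}\int u^2$; combining this with the sup bound on $u$ yields an $L^2$ average of $\nabla u$, not the pointwise quantity $\sqrt{-t}\,|\partial_x u(x,t)|$ in the conclusion. To get a pointwise gradient bound you must use more than energy estimates: since the hypotheses give $a^{ij}(\cdot,t)\in C^1(B_1^n)$ and parabolic Lipschitz regularity in $(x,t)$, one can either differentiate the equation in $x_k$ and run the same sup--$L^1$ argument on $\partial_k u$ (the lower-order terms generated by $\partial_k a^{ij}$ are controlled by the Lipschitz bound $L$), or invoke interior $C^{1,\alpha}$ Schauder estimates directly, which is closer to what \cite{LSU} actually supplies. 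Either route closes the argument with constants depending only on $n,\boldsymbol{\lambda},L$.
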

\begin{rem}\label{r17}
To prove the above lemma, we may consider the following change of
variables:
\[
\left(x,\, t\right)=\left(\sqrt{T}\,\widehat{x},\, T\,\widehat{t}\right)
\]
In the new variables, the equation in Lemma \ref{l16} becomes 
\[
\Big|\partial_{\widehat{t}}\mathscr{\mathrm{u}}-\partial_{\widehat{x}_{i}}\left(a^{ij}\left(\sqrt{T}\,\widehat{x},\: T\,\widehat{t}\right)\partial_{\tilde{x}_{j}}\mathscr{\mathrm{u}}\right)\Big|\,\leq\, L\left(|\partial_{\widehat{x}}\mathscr{\mathrm{u}}|\,+\,|\mathscr{\mathrm{u}}|\right)
\]
for $\widehat{x}\in B_{\nicefrac{1}{\sqrt{T}}}^{n}$, $\widehat{t}\in\left[-1,\,0\right]$.
Then apply standard estimates from \cite{LSU} to the normalized equation (note that $T\in(0,\,1]$).
\end{rem}
The second lemma is a local type of Carleman's inequalities from \cite{EFV}.
\begin{lem}[Local Carleman's inequality]\label{l18}

Let $P=\partial_{t}-\partial_{i}\left(a^{ij}\left(x,\, t\right)\,\partial_{j}\right)$
be a differential operator such that $a_{t}^{ij}=a^{ij}(\cdot,\, t)\in C^{1}\left(B_{1}^{n}\right)$
for $t\in\left[-1,\,0\right]$ , $a^{ij}=a^{ji}$, $a^{ij}\left(0,\,0\right)=\delta^{ij}$
and
\[
\boldsymbol{\lambda}\delta^{ij}\leq a^{ij}\leq\frac{1}{\boldsymbol{\lambda}}\delta^{ij}
\]
\[
|a^{ij}\left(x,\, t\right)-a^{ij}\left(\tilde{x},\,\tilde{t}\right)|\,\leq L\left(|x-\tilde{x}|\,+\,|t-\tilde{t}|{}^{\frac{1}{2}}\right)
\]
for some $\boldsymbol{\lambda}\in(0,\,1]$, $L>0$, where $B_{1}^{n}=\left\{ x\in\mathbb{R}^{n}\Big|\,|x|<1\right\} $. 

Then for any fixed constant $M\geq4$, there exists a non-increasing
function $\boldsymbol{\varphi}:\left(-\frac{4}{M},\,0\right)\rightarrow\mathbb{R}_{+}$
satisfying $\frac{-t}{\sigma}\leq\boldsymbol{\varphi}\left(t\right)\leq-t$
for some constant $\sigma=\sigma\left(n,\,\boldsymbol{\lambda},\, L\right)\geq1$,
so that for any constant $\delta\in\left(0,\,\frac{1}{M}\right)$
and function $\mathrm{v}\in C_{\textrm{c}}^{2,1}\left(B_{1}^{n}\times(-\frac{2}{M},\,0]\right)$,
there holds
\[
M^{2}\int\,\mathrm{v}^{2}\boldsymbol{\varphi}_{\delta}^{-M}\boldsymbol{\Phi}_{\delta}\, dx\, dt\,+\, M\int\,|\partial_{x}\mathrm{v}|{}^{2}\boldsymbol{\varphi}_{\delta}^{1-M}\boldsymbol{\Phi}_{\delta}\, dx\, dt
\]
\noindent \resizebox{1.009\linewidth}{!}{
  \begin{minipage}{\linewidth}
  \begin{align*}
  \leq\sigma\int\,|P\mathscr{\mathrm{v}}|{}^{2}\boldsymbol{\varphi}_{\delta}^{1-M}\boldsymbol{\Phi}_{\delta}\, dx\, dt\,+\,\left(\sigma M\right)^{M}\sup_{t<0}\int\,\left(|\partial_{x}\mathscr{\mathrm{v}}|{}^{2}+\mathrm{v}^{2}\right)\, dx\,+\,\sigma M\int\,\mathrm{v}^{2}\boldsymbol{\varphi}_{\delta}^{-M}\boldsymbol{\Phi}_{\delta}\, dx\Big|_{t=0}
\end{align*}
  \end{minipage}
}\\

where $\boldsymbol{\varphi}_{\delta}\left(t\right)=\boldsymbol{\varphi}\left(t-\delta\right)$
and $\boldsymbol{\Phi}_{\delta}\left(x,\, t\right)=\boldsymbol{\Phi}\left(x,\, t-\delta\right)=\frac{1}{\left(4\pi\left(-t+\delta\right)\right)^{\frac{n}{2}}}\,\exp\left(-\frac{\mid x\mid^{2}}{4\left(-t+\delta\right)}\right)$.\end{lem}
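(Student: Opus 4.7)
The plan is to prove this by the standard conjugation–commutator method for parabolic Carleman inequalities. Writing the weight as $\mathcal{G}_\delta = \boldsymbol{\varphi}_\delta^{-M}\boldsymbol{\Phi}_\delta$, substitute $w = \mathcal{G}_\delta^{1/2}\mathrm{v}$ so that proving the estimate for $\mathrm{v}$ reduces to an unweighted $L^2$ identity for $w$. The conjugated operator $\mathcal{L} := \mathcal{G}_\delta^{1/2} P\, \mathcal{G}_\delta^{-1/2}$ splits as $\mathcal{L} = \mathcal{S} + \mathcal{A}$ with $\mathcal{S}$ formally self-adjoint and $\mathcal{A}$ antisymmetric on $L^2(dx\, dt)$. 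Expanding gives
\[
\int |\mathcal{L} w|^2\, dx\, dt = \int \bigl(|\mathcal{S}w|^2 + |\mathcal{A}w|^2\bigr)\, dx\, dt + \int w\,[\mathcal{S},\mathcal{A}]w\, dx\, dt + \text{(boundary in $t$)},
\]
and the heart of the matter is to bound the commutator $[\mathcal{S},\mathcal{A}]$ below by $c\bigl(M^2 \boldsymbol{\varphi}_\delta^{-1} - M\,\partial_i(a^{ij}\partial_j)\bigr)$ on $B_1^n\times(-2/M, 0]$. The boundary integrals at $t=-2/M$ and $t=0$ will produce the $(\sigma M)^M$ and $\sigma M$ terms on the right-hand side, respectively.

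The construction of $\boldsymbol{\varphi}$ is the first key step. I would take $\boldsymbol{\varphi}(t)$ to be a controlled perturbation of $-t$, defined by an ODE whose coefficients depend only on $(n,\boldsymbol{\lambda},L)$, so that $\partial_t \log \boldsymbol{\varphi}_\delta$ cancels the singular part of $\partial_t \log \boldsymbol{\Phi}_\delta = \tfrac{|x|^2}{4(-t+\delta)^2} - \tfrac{n/2}{-t+\delta}$ arising in the constant-coefficient ($a^{ij}\equiv\delta^{ij}$) model, and leaves a positive gain of size $M/\boldsymbol{\varphi}_\delta$ in the commutator. With $\partial_i \boldsymbol{\Phi}_\delta = -\tfrac{x_i}{2(-t+\delta)}\boldsymbol{\Phi}_\delta$, the constant-coefficient commutator is explicit and produces exactly the powers $M^2$ and $M$ in front of $w^2\boldsymbol{\varphi}_\delta^{-1}$ and $|\partial_x w|^2$ that appear on the left. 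The bound $-t/\sigma \le \boldsymbol{\varphi}(t)\le -t$ and the constant $\sigma=\sigma(n,\boldsymbol{\lambda},L)$ are built into this ODE construction.

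The main obstacle will be absorbing the variable-coefficient errors. The discrepancy $a^{ij}(x,t) - \delta^{ij}$ generates commutator remainders pointwise controlled by $L^2(|x|^2 + (-t))$ on $\mathrm{supp}\,\mathrm{v}\subset B_1^n$. Using the Gaussian identity $|x|^2 \boldsymbol{\Phi}_\delta \lesssim (-t+\delta)\boldsymbol{\Phi}_\delta$ together with $-t+\delta \lesssim \sigma\boldsymbol{\varphi}_\delta$, these remainders are of order $\boldsymbol{\varphi}_\delta\cdot \mathcal{G}_\delta$ and can be absorbed into the leading $M^2\boldsymbol{\varphi}_\delta^{-1}$ term once $M$ exceeds some threshold $M_0(n,\boldsymbol{\lambda},L)$; this is precisely where the hypothesis $M\ge 4$ and the constant $\sigma$ enter, and why one must normalize so that $a^{ij}(0,0) = \delta^{ij}$. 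The final step is to track the two boundary-in-time integrals: at $t = -2/M$, the weight satisfies $\mathcal{G}_\delta(-2/M) \lesssim (\sigma M)^M$, which produces the $(\sigma M)^M \sup_{t<0}\int(|\partial_x\mathrm{v}|^2+\mathrm{v}^2)\, dx$ term after undoing the substitution $w = \mathcal{G}_\delta^{1/2}\mathrm{v}$; at $t = 0$, one simply retains the $\sigma M\int \mathrm{v}^2\mathcal{G}_\delta\, dx$ term as an unavoidable boundary contribution. Rewriting everything back in $\mathrm{v}$ gives the stated inequality.
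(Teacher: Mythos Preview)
The paper does not actually prove this lemma: it is stated as a result quoted from \cite{EFV} (Escauriaza--Fern\'andez--Vessella), with no proof given in the text. So there is no ``paper's own proof'' to compare against.

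That said, your proposal is exactly the method by which such Carleman estimates are established in \cite{EFV} and the related works \cite{ESS}, \cite{N}: conjugate by the square root of the weight, split the conjugated operator into symmetric and antisymmetric parts, and extract positivity from the commutator. Your description of how the function $\boldsymbol{\varphi}$ is constructed (as a perturbation of $-t$ solving an ODE calibrated so that the leading constant-coefficient commutator term has the right sign), how the variable-coefficient errors of size $L(|x|+\sqrt{-t})$ are absorbed using $|x|^2\boldsymbol{\Phi}_\delta \lesssim (-t+\delta)\boldsymbol{\Phi}_\delta$, and how the two boundary terms at $t=-2/M$ and $t=0$ produce the $(\sigma M)^M$ and $\sigma M$ contributions, all match the structure of the argument in \cite{EFV}. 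In short, your outline is correct and faithful to the source the paper is citing; there is nothing further to compare.
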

\begin{rem}\label{r19}
Note that the last term on the RHS of the above inequality vanishes
provided that $\mathrm{v}\Big|_{t=0}=0$.
\end{rem}
Now we state the proposition (of showing the exponential decay) and
include the proof (which is due to  \cite{EF} and \cite{N}).

\begin{prop}[Exponential decay and Unique continuation principle]\label{p20}

Let $P=\partial_{t}-\partial_{i}\left(a^{ij}\left(x,\, t\right)\,\partial_{j}\right)$
be a differential operator such that $a_{t}^{ij}=a^{ij}(\cdot,\, t)\in C^{1}\left(B_{1}^{n}\right)$
for $t\in\left[-1,\,0\right]$, $a^{ij}=a^{ji}$, and
\[
\boldsymbol{\lambda}\delta^{ij}\leq a^{ij}\leq\frac{1}{\boldsymbol{\lambda}}\delta^{ij}
\]
\[
|a^{ij}\left(x,\, t\right)-a^{ij}\left(\tilde{x},\,\tilde{t}\right)|\,\leq L\left(|x-\tilde{x}|\,+\,|t-\tilde{t}|{}^{\frac{1}{2}}\right)
\]
for some $\boldsymbol{\lambda}\in(0,\,1]$, $L>0$, where $B_{1}^{n}=\left\{ x\in\mathbb{R}^{n}\Big|\,|x|<1\right\} $. 

Suppose that $\mathrm{u}\in C^{2,1}\left(B_{1}^{n}\times\left[-T,\,0\right]\right)$
satisfies 
\begin{equation}
|P\mathscr{\mathrm{u}}|\,\leq\, L\left(\frac{1}{\sqrt{T}}|\partial_{x}\mathscr{\mathrm{u}}|\,+\,\frac{1}{T}|\mathscr{\mathrm{u}}|\right)\label{68}
\end{equation}
for some $T\in(0,\,1]$, and that either $u$ vanishes at $\left(0,\,0\right)$
to infinite order (see \cite{EF}), i.e. 
\begin{equation}
\forall\, k\in\mathbb{N\quad}\exists\, C_{k}>0\quad\textrm{s.t.}\quad|\mathrm{u}\left(x,\, t\right)|\,\leq\, C_{k}\left(|x|\,+\,\sqrt{-t}\right)^{k}\label{69}
\end{equation}
or $\mathrm{u}$ vanishes identically at $t=0$ (see \cite{N}), i.e.
\begin{equation}
\mathscr{\mathrm{u}}\Big|_{t=0}=0\label{70}
\end{equation}

Then there exit $\boldsymbol{\Lambda}=\boldsymbol{\Lambda}\left(n,\,\boldsymbol{\lambda},\, L\right)>0$,
$\boldsymbol{\alpha}=\boldsymbol{\alpha}\left(n,\,\boldsymbol{\lambda},\, L\right)\in\left(0,\,1\right)$
so that
\begin{equation}
|\mathrm{u}\left(x,\, t\right)|\,+\,|\partial_{x}\mathscr{\mathrm{u}}\left(x,\, t\right)|\label{71}
\end{equation}
\[
\leq\boldsymbol{\Lambda}\, e^{\frac{1}{\boldsymbol{\Lambda}t}}\left(\parallel\partial_{x}\mathrm{u}\parallel_{L^{\infty}\left(B_{1}\times\left[-T,\,0\right]\right)}+\parallel\mathrm{u}\parallel_{L^{\infty}\left(B_{1}\times\left[-T,\,0\right]\right)}\right)
\]
for $x\in B_{\nicefrac{1}{4}}^{n}$, $t\in[-\boldsymbol{\alpha}T,\,0)$.\end{prop}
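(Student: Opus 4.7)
The plan is to apply the local Carleman inequality of Lemma \ref{l18} to a spatial cutoff of $u$ and then invoke the mean value inequality of Lemma \ref{l16} to convert the resulting weighted $L^{2}$ bound into the pointwise estimate (\ref{71}). First I reduce to $T=1$ via the scaling $(\hat{x},\hat{t})=(x/\sqrt{T},\, t/T)$ of Remark \ref{r17}; a linear change of variables in $x$ then arranges $a^{ij}(0,0)=\delta^{ij}$ while preserving the ellipticity constant $\boldsymbol{\lambda}$ and the Lipschitz bound $L$ up to universal factors. Fix $\eta\in C_{\mathrm{c}}^{\infty}(B_{1}^{n})$ with $0\leq\eta\leq 1$ and $\eta\equiv 1$ on $B_{1/2}^{n}$, multiply by a smooth time-cutoff supported in $(-2/M,\,0]$ and equal to $1$ near $t=0$, and set $v$ equal to this product times $u$, so $v\in C_{\mathrm{c}}^{2,1}(B_{1}^{n}\times(-2/M,\,0])$ for every $M\geq 4$. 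A direct computation gives
\[
Pv\,=\,\eta\, Pu\,+\,\mathcal{R},\qquad \mathcal{R}\,:=\,-\,2\, a^{ij}(\partial_{i}\eta)(\partial_{j}u)\,-\,\partial_{i}(a^{ij}\partial_{j}\eta)\, u\,+\,(\text{time-cutoff terms}),
\]
the spatial commutators in $\mathcal{R}$ being supported in the annulus $A:=B_{1}^{n}\setminus B_{1/2}^{n}$, and the time-cutoff terms being supported where $t\in[-2/M,\,-1/M]$.

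Applying Lemma \ref{l18} to $v$, split $|Pv|^{2}\leq 2|\eta Pu|^{2}+2|\mathcal{R}|^{2}$ and invoke hypothesis (\ref{68}) in its normalized form $|Pu|\leq L(|\partial_{x}u|+|u|)$ to bound $|\eta Pu|^{2}\leq C_{L}(|\partial_{x}v|^{2}+v^{2})+C_{L}\,\mathbf{1}_{A}(|\partial_{x}u|^{2}+u^{2})$. For $M\geq M_{0}(\sigma,L)$ sufficiently large, the resulting bulk term is absorbed into the left-hand side terms $M^{2}\int v^{2}\boldsymbol{\varphi}_{\delta}^{-M}\boldsymbol{\Phi}_{\delta}$ and $M\int|\partial_{x}v|^{2}\boldsymbol{\varphi}_{\delta}^{1-M}\boldsymbol{\Phi}_{\delta}$ of Lemma \ref{l18}, leaving only the localization errors (annulus-supported commutator and time-cutoff terms) and the initial-time boundary term. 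On $\{|x|\geq 1/2\}$ the heat kernel obeys $\boldsymbol{\Phi}_{\delta}(x,t)\leq C(-t+\delta)^{-n/2}\exp\bigl(-1/(16(-t+\delta))\bigr)$, while on $\{t\leq -1/M\}$ one has $\boldsymbol{\varphi}_{\delta}\gtrsim 1/M$, so the localization errors are together bounded by $(\sigma M)^{M}\mathcal{E}^{2}$, where $\mathcal{E}:=\|u\|_{L^{\infty}(B_{1}^{n}\times[-T,0])}+\|\partial_{x}u\|_{L^{\infty}(B_{1}^{n}\times[-T,0])}$. Under (\ref{70}) Remark \ref{r19} kills the boundary term; under (\ref{69}) I split the $x$-integral of the boundary term into $\{|x|<\varepsilon\}$, controlled by infinite-order vanishing, and $\{|x|\geq\varepsilon\}$, controlled by Gaussian decay of $\boldsymbol{\Phi}_{\delta}|_{t=0}$, and optimize $\varepsilon=\varepsilon(M)$ to subsume it under the same $(\sigma M)^{M}\mathcal{E}^{2}$ bound.

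Sending $\delta\downarrow 0$ and restricting the left-hand side to $B_{1/4}^{n}\times[-\boldsymbol{\alpha},\,0)$, where $\boldsymbol{\varphi}(t)^{-M}\boldsymbol{\Phi}(x,t)\geq c\,(-t)^{-M-n/2}\exp\bigl(-1/(16(-t))\bigr)$, the surviving inequality reads
\[
\int_{B_{1/4}^{n}\times[-\boldsymbol{\alpha},\,0)} v^{2}\,(-t)^{-M-n/2}\exp\bigl(-1/(16(-t))\bigr)\,dx\,dt\,\leq\,(\sigma M)^{M}\mathcal{E}^{2}.
\]
For each $(x_{0},t_{0})\in B_{1/8}^{n}\times[-\boldsymbol{\alpha}/2,\,0)$, restricting further to the parabolic subcylinder $Q(x_{0},t_{0};\sqrt{-t_{0}}/2)$ (on which $v=u$, since $\eta\equiv 1$) and optimizing $M\sim c/(-t_{0})$ in the resulting inequality yields
\[
\fint_{Q(x_{0},t_{0};\,\sqrt{-t_{0}}/2)} u^{2}\,\leq\, C\exp\bigl(2/(\boldsymbol{\Lambda}\, t_{0})\bigr)\,\mathcal{E}^{2}.
\]
The mean value inequality of Lemma \ref{l16}, applied to $u$ on this subcylinder, then upgrades the averaged $L^{2}$ bound to the pointwise estimate (\ref{71}) for $|u|+\sqrt{-t}|\partial_{x}u|$.

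The principal obstacle is the parameter book-keeping in the second and third paragraphs: $M$ must be chosen simultaneously large enough to absorb the bulk contribution from $|\eta Pu|^{2}$ and to dominate the factor $(\sigma M)^{M}$ stemming from the localization errors (and, in case (\ref{69}), the boundary term), yet flexible as a function of $t_{0}$ so that the final scale-matching $M\sim c/(-t_{0})$ genuinely yields exponential smallness $e^{1/(\boldsymbol{\Lambda}\,t)}$ rather than mere polynomial decay. A secondary subtlety, specific to hypothesis (\ref{69}), is the scale-selection argument for the boundary term that balances infinite-order vanishing at the origin against the Gaussian tail of the heat kernel away from the origin.
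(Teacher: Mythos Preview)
Your outline is essentially the paper's proof: apply Lemma~\ref{l18} to a space--time cutoff of $u$, absorb the bulk contribution of $|Pu|^2$ into the left side for $M$ large, bound the localization errors supported where the cutoffs are nonconstant by $(\sigma M)^{M}\mathcal{E}^{2}$, send $\delta\to 0$, restrict to a parabolic subcylinder around $(x_0,t_0)$, choose $M\sim c/(-t_0)$, and finish with Lemma~\ref{l16}.

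The one genuine difference is your treatment of case (\ref{69}). The paper does \emph{not} estimate the boundary term at $t=0$ directly; instead it introduces a second time cutoff $\eta_{\epsilon}$ vanishing on $[-\epsilon,0]$, so that $v_{\epsilon}|_{t=0}=0$ and Remark~\ref{r19} kills the boundary term outright. The price is an extra error $\tfrac{2L}{\epsilon}|u|\,\chi_{[-\epsilon,0]}$ in $Pv_{\epsilon}$, which the paper controls in (\ref{74}) via a Gaussian moment computation using (\ref{69}) with some $k\geq M+\tfrac{n}{2}$; this error vanishes after first $\delta\to 0$ then $\epsilon\to 0$. Your alternative---keep the boundary term and estimate it directly---also works, but not quite as you describe: since $\boldsymbol{\varphi}_{\delta}(0)^{-M}\sim(\sigma/\delta)^{M}$, the boundary term cannot be bounded by $(\sigma M)^{M}\mathcal{E}^{2}$ uniformly in $\delta$ via any choice of $\varepsilon=\varepsilon(M)$. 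What you should do instead is take $k>M+\tfrac{n}{2}$ in (\ref{69}) and compute directly
\[
\sigma M\int v^{2}\,\boldsymbol{\varphi}_{\delta}^{-M}\boldsymbol{\Phi}_{\delta}\,dx\Big|_{t=0}
\;\leq\;\sigma M\,(\sigma/\delta)^{M}\,C_{k}^{2}\int_{B_1}|x|^{2k}(4\pi\delta)^{-n/2}e^{-|x|^{2}/(4\delta)}\,dx
\;\leq\;C(n,k,\sigma,M)\,\delta^{\,k-M-n/2},
\]
which tends to $0$ as $\delta\to 0$; no $\varepsilon$-splitting is needed. With that correction the two arguments are equivalent.
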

\begin{rem}\label{r21}
Later we would apply Proposition \ref{p20} under the condition (\ref{70})
to show the exponential decay of the deviation $h_{t}=h\left(\cdot,\, t\right)$ as $t\nearrow0$.
On the other hand, Proposition \ref{p20}  implies that under the condition
(\ref{69}), the function $\mathrm{u}$ in ($\ref{68}$) must vanish identically
at $t=0$. In particular, in the case when $\mathrm{u}$ is time-independent (so $\mathrm{u}$ would satisfy an elliptic equation, and we could take $t=0$ in (\ref{69})), 
it implies that $\mathrm{u}$ vanishes identically.
Such phenomenon is called the ``unique continuation principle'' and would be used at the end
of this section.\end{rem}

\begin{proof}
For simplicity, we may assume that $a^{ij}\left(0,\,0\right)=\delta^{ij}$. 
Otherwise, we could do change of variables like $\widehat{x}=a^{ij}\left(0,\,0\right)^{-\frac{1}{2}}x$ to achieve that.

In the proof, we mainly focus on dealing with the case of (\ref{69}),
since the same argument also applies to the case of (\ref{70}) with only
a slight modification, which we would point out on the way of proof.

Fix a constant $M\in[\frac{4L^{2}(n+\sigma)}{T},\,\infty)$ (to be
chosen), where $\sigma=\sigma\left(n,\,\boldsymbol{\lambda},\, L\right)\geq1$
is the constant that appears in Lemma  \ref{l18}. Then for any $\epsilon\in\left(0,\,\min\left\{ \frac{1}{M},\,1\right\} \right)$,
choose smooth cut-off functions $\zeta=\zeta(x)$, $\eta_{\epsilon}=\eta_{\epsilon}(t)$
and $\eta=\eta(t)$ such that 
\[
\chi_{B_{\nicefrac{1}{2}}^{n}}\leq\zeta\leq\chi_{B_{1}^{n}},\quad\parallel\zeta\parallel_{C^{2}}\leq4
\]
 
\[
\chi_{\left[\frac{-1}{M},\,-\epsilon\right]}\leq\eta_{\epsilon}\leq\chi_{\left[\frac{-2}{M},\,0\right]},\quad\chi_{\left[\frac{-1}{M},\,0\right]}\leq\eta\leq\chi_{\left[\frac{-2}{M},\,0\right]},\quad\eta_{\epsilon}\nearrow\eta\quad\textrm{as}\:\epsilon\searrow0
\]
 
\[
|\partial_{t}\eta_{\epsilon}|\,\leq2M\chi_{\left[\frac{-2}{M},\,\frac{-1}{M}\right]}+\,\frac{2}{\epsilon}\chi_{\left[-\epsilon,\,0\right]}
\]
where $\chi_{B_{1}^{n}}$ is the characteristic function of $B_{1}^{n}$.\\
Let $\mathrm{v}_{\epsilon}\left(x,\, t\right)=\zeta(x)\,\eta_{\epsilon}(t)\,\mathrm{v}\left(x,\, t\right)$
be a localization of $\mathrm{v}$, which satisfies $\mathrm{v}_{\epsilon}\Big|_{t=0}=0$
and converges pointwisely to $\mathrm{v}\left(x,\, t\right)=\zeta(x)\,\eta(t)\,\mathrm{v}\left(x,\, t\right)$
as $\epsilon\searrow0$. Besides, we have
\begin{equation}
|P\mathrm{v}_{\epsilon}|\,\leq\, \{L\zeta\eta_{\epsilon}\left(\frac{1}{\sqrt{T}}|\partial_{x}\mathrm{v}|\,+\,\frac{1}{T}|\mathrm{v}|\right)\label{72}
\end{equation}
\[
+C\left(\boldsymbol{\lambda},\, L\right)\left(|\partial_{x}\mathscr{\mathrm{v}}|\,+\,|\mathscr{\mathrm{v}}|\right)\,\chi_{B_{1}\setminus B_{\frac{1}{2}}}\left(x\right)+\,2LM\,|\mathscr{\mathrm{v}}|\,\chi_{\left[\frac{-2}{M},\,\frac{-1}{M}\right]}\left(t\right)+\frac{2L}{\epsilon}\,|\mathscr{\mathrm{v}}|\,\chi_{\left[-\epsilon,\,0\right]}\left(t\right)\}
\]
\[
\leq L\left(\frac{1}{\sqrt{T}}|\partial_{x}\mathrm{v}_{\epsilon}|\,+\,\frac{1}{T}|\mathrm{v}_{\epsilon}|\right)+\, C\left(\boldsymbol{\lambda},\, L\right)M\left(|\partial_{x}\mathrm{v}|\,+\,|\mathrm{v}|\right)\,\chi_{E}\left(x,\, t\right)+\,\frac{2L}{\epsilon}\,|\mathscr{\mathrm{v}}|\,\chi_{\left[-\epsilon,\,0\right]}\left(t\right)
\]
where $E=\left\{ \left(x,\, t\right)\in B_{1}^{n}\times[-1,\,0)\Big|\;\frac{1}{2}\leq|x|\leq1\:\,\textrm{or}\,\:\frac{-2}{M}\leq t\leq\frac{-1}{M}\right\} $.
Note that in the case of (\ref{70}), it suffices to consider $\mathrm{v}$
(without using the $\epsilon$ cut-off) in order to make the function
vanishing at $t=0$. 

Then for each $\delta\in\left(0,\,\frac{1}{M}\right)$, by Lemma  \ref{l18}
(applied to $\mathrm{v}_{\epsilon}$) and (\ref{72}), there holds
\[
M^{2}\int\,\mathrm{v}_{\epsilon}^{2}\boldsymbol{\varphi}_{\delta}^{-M}\boldsymbol{\Phi}_{\delta}\, dx\, dt\,+\, M\int\,|\partial_{x}\mathrm{v}_{\epsilon}|{}^{2}\boldsymbol{\varphi}_{\delta}^{1-M}\boldsymbol{\Phi}_{\delta}\, dx\, dt
\]
\noindent \resizebox{1.009\linewidth}{!}{
  \begin{minipage}{\linewidth}
  \begin{align*}
\leq \{2\sigma L^{2}\int\,\left(\frac{\mathrm{v}_{\epsilon}^{2}}{T^{2}}\,+\,\frac{|\partial_{x}\mathrm{v}_{\epsilon}|^{2}}{T}\right)\boldsymbol{\varphi}_{\delta}^{1-M}\boldsymbol{\Phi}_{\delta}\, dx\, dt\,+\,2C\left(\boldsymbol{\lambda},\, L\right)\sigma M^{2}\int_{E}\,\left(|\partial_{x}\mathscr{\mathrm{v}}|{}^{2}+\mathscr{\mathrm{v}}^{2}\right)\boldsymbol{\varphi}_{\delta}^{1-M}\boldsymbol{\Phi}_{\delta}\, dx\, dt
\end{align*}
  \end{minipage}
}

\[
+\frac{4\sigma L^{2}}{\epsilon^{2}}\int_{-\epsilon}^{0}\int_{B_{1}}\mathrm{v}^{2}\boldsymbol{\varphi}_{\delta}^{1-M}\boldsymbol{\Phi}_{\delta}\, dx\, dt\,+\,\left(\sigma M\right)^{M}\sup_{t}\int\,\left(|\partial_{x}\mathrm{v}_{\epsilon}|{}^{2}+\mathrm{v}_{\epsilon}^{2}\right)\, dx\}
\]
By our choice of $M$, the first term on the RHS of the above inequality
can be absorbed by its LHS. Thus, we get
\begin{equation}
M^{2}\int\,\mathrm{v}_{\epsilon}^{2}\boldsymbol{\varphi}_{\delta}^{-M}\boldsymbol{\Phi}_{\delta}\, dx\, dt\leq \{C\left(\boldsymbol{\lambda},\, L\right)\sigma M^{2}\int_{E}\,\left(|\partial_{x}\mathscr{\mathrm{v}}|{}^{2}+\mathscr{\mathrm{v}}^{2}\right)\boldsymbol{\varphi}_{\delta}^{1-M}\boldsymbol{\Phi}_{\delta}\, dx\, dt\,\label{73}
\end{equation}
\[
+4\left(\sigma M\right)^{M}\sup_{-T\leq t\leq0}\int_{B_{1}}\,\left(|\partial_{x}\mathscr{\mathrm{v}}|^{2}+\mathscr{\mathrm{v}}^{2}\right)\, dx\,+\,\frac{4\sigma L^{2}}{\epsilon^{2}}\int_{-\epsilon}^{0}\int_{B_{1}}\mathscr{\mathrm{v}}^{2}\boldsymbol{\varphi}_{\delta}^{1-M}\boldsymbol{\Phi}_{\delta}\, dx\, dt\}
\]
Now choose an integer $k\geq M+\frac{n}{2}$, then by (\ref{69})
the last term on the RHS of (\ref{73}) can be estimated by
\begin{equation}
\frac{4\sigma L^{2}}{\epsilon^{2}}\int_{-\epsilon}^{0}\int_{B_{1}}\mathscr{\mathrm{v}}^{2}\boldsymbol{\varphi}_{\delta}^{1-M}\boldsymbol{\Phi}_{\delta}\, dx\, dt\label{74}
\end{equation}
\[
\leq\,\frac{4\sigma L^{2}}{\epsilon^{2}}\int_{-\epsilon}^{0}\int_{B_{1}}\frac{C_{k}\left(|x|\,+\,\sqrt{-t}\right)^{2\left(M+\frac{n}{2}\right)}}{\left(\frac{-t+\delta}{\sigma}\right)^{M-1}}\frac{\exp\left(\frac{-|x|{}^{2}}{4\left(-t+\delta\right)}\right)}{\left(4\pi\left(-t+\delta\right)\right)^{\frac{n}{2}}}\, dx\, dt
\]
\noindent \resizebox{1.009\linewidth}{!}{
  \begin{minipage}{\linewidth}
  \begin{align*}
\leq C\left(n,\, C_{k},\,\sigma,\, M,\, L\right)\frac{1}{\epsilon^{2}}\int_{-\epsilon}^{0}\left\{ \int_{B_{1}}\,\left(\frac{|x|{}^{2}}{-t+\delta}\,+1\right)^{M+\frac{n}{2}}\exp\left(\frac{-|x|{}^{2}}{4\left(-t+\delta\right)}\right)\, dx\right\} \left(-t+\delta\right)\, dt
\end{align*}
  \end{minipage}
}

\noindent \resizebox{1.009\linewidth}{!}{
  \begin{minipage}{\linewidth}
  \begin{align*}
\leq C\left(n,\, C_{k},\,\sigma,\, M,\, L\right)\frac{1}{\epsilon^{2}}\int_{-\epsilon}^{0}\left\{ \int_{0}^{\infty}\,\left(|\xi|{}^{2}+1\right)^{M+\frac{n}{2}}\exp\left(\frac{-|\xi|{}^{2}}{4}\right)\, d\xi\right\} \left(-t+\delta\right)^{\frac{n}{2}+1}\, dt
\end{align*}
  \end{minipage}
}

\[
\leq C\left(n,\, C_{k},\,\sigma,\, M,\, L\right)\frac{\left(\epsilon+\delta\right)^{\frac{n}{2}+2}-\delta^{\frac{n}{2}+2}}{\epsilon^{2}}
\]
In view of (\ref{74}), apply the monotone convergence theorem to
(\ref{73}) by first letting $\delta\searrow0$ and then $\epsilon\searrow0$
to arrive at
\begin{equation}
\int_{B_{\frac{1}{2}\times\left(\frac{-1}{M},\,0\right)}}\mathscr{\mathrm{v}}^{2}\boldsymbol{\varphi}^{-M}\boldsymbol{\Phi}\, dx\, dt\,\label{75}
\end{equation}

\noindent \resizebox{1.009\linewidth}{!}{
  \begin{minipage}{\linewidth}
  \begin{align*}
\leq\, C\left(\Lambda,\, L\right)\sigma\int_{E}\,\left(|\partial_{x}\mathscr{\mathrm{v}}|^{2}+\mathscr{\mathrm{v}}^{2}\right)\boldsymbol{\varphi}^{1-M}\boldsymbol{\Phi}\, dx\, dt\,+\left(4\sigma M\right)^{M}\sup_{-T\leq t\leq0}\int_{B_{1}}\,\left(|\partial_{x}\mathscr{\mathrm{v}}|{}^{2}+\mathscr{\mathrm{v}}^{2}\right)\, dx
\end{align*}
  \end{minipage}
}

\noindent \resizebox{1.009\linewidth}{!}{
  \begin{minipage}{\linewidth}
  \begin{align*}
\leq C\left(n,\,\Lambda,\, L\right)\left(\sigma\int_{E}\,\boldsymbol{\varphi}^{1-M}\boldsymbol{\Phi}\, dx\, dt\,+\,\left(\sigma M\right)^{M}\right)\left(\parallel\partial_{x}\mathscr{\mathrm{v}}\parallel^2_{L^{\infty}\left(B_{1}\times\left[-T,\,0\right]\right)}+\parallel\mathrm{v}\parallel^2_{L^{\infty}\left(B_{1}\times\left[-T,\,0\right]\right)}\right)
\end{align*}
  \end{minipage}
}\\

Note that in the case of (\ref{70}), we can get (\ref{75}) directly
from taking the limit as $\delta\searrow0$ without using (\ref{74}).

Next, we would estimate the first term on the RHS of (\ref{75}).
For $\left(x,\, t\right)\in E$, either $\frac{-2}{M}\leq t\leq\frac{-1}{M}$,
in which case we have
\begin{equation}
\boldsymbol{\varphi}^{1-M}\boldsymbol{\Phi}\left(x,\, t\right)\leq\,\left(\frac{-t}{\sigma}\right)^{1-M}\frac{1}{\left(4\pi\left(-t\right)\right)^{\frac{n}{2}}}\leq\,\frac{\left(\sigma M\right)^{M-1+\frac{n}{2}}}{\left(4\pi\sigma\right)^{\frac{n}{2}}}\label{76}
\end{equation}
or $\frac{1}{2}\leq|x|\leq1\:\textrm{and}\:\frac{-2}{M}\leq t<0$,
in which case we have
\begin{equation}
\boldsymbol{\varphi}^{1-M}\boldsymbol{\Phi}\left(x,\, t\right)\leq\,\left(\frac{\sigma M}{\left(-t\right)M}\right)^{M-1}\frac{M^{\frac{n}{2}}}{\left(4\pi\left(-t\right)M\right)^{\frac{n}{2}}}\exp\left(\frac{-M}{16\left(-tM\right)}\right)\label{77}
\end{equation}
\[
=\frac{\left(\sigma M\right)^{M-1}\left(\frac{M}{4\pi}\right)^{\frac{n}{2}}}{\left(-tM\right)^{M-1+\frac{n}{2}}\exp\left(\frac{\nicefrac{M}{16}}{-tM}\right)}\leq\,\left(\sigma M\right)^{M-1}\left(\frac{M}{4\pi}\right)^{\frac{n}{2}}\left(\frac{M-1+\frac{n}{2}}{e\,\nicefrac{M}{16}}\right)^{M-1+\frac{n}{2}}
\]
\[
\leq\left(\frac{16\sigma}{\textrm{e}}\left(M-1+\frac{n}{2}\right)\right)^{M-1+\frac{n}{2}}
\]
Note that in (\ref{77}) we have used the fact that the function $\vartheta\left(\xi\right)=\xi^{M-1+\frac{n}{2}}\exp\left(\frac{\nicefrac{M}{16}}{\xi}\right)$
achieves its minimum on $\mathbb{R}_{+}$ at $\xi=\frac{\nicefrac{M}{16}}{M-1+\frac{n}{2}}$. 

On the other hand, for any $\left(y,\, s\right)\in B_{\frac{1}{4}}\times[\frac{-1}{8M},\,0)$,
the parabolic cylinder $Q\left(y,\, s;\,\sqrt{-s}\right)=B_{\sqrt{-s}}^{n}\left(y\right)\times\left(2s,\, s\right)$
is contained in $B_{\nicefrac{1}{2}}^{n}\times\left(\frac{-1}{M},\,0\right)$
and hence the LHS of (\ref{75}) is bounded below by
\begin{equation}
\int_{B_{\nicefrac{1}{2}}^{n}\times\left(\frac{-1}{M},\,0\right)}\mathscr{\mathrm{v}}^{2}\boldsymbol{\varphi}^{-M}\boldsymbol{\Phi}\, dx\, dt\,\geq\,\frac{\exp\frac{-\nicefrac{1}{4}}{-8s}}{\left(4\pi\right)^{\frac{n}{2}}\left(-2s\right)^{M+\frac{n}{2}}}\int_{Q\left(y,\, s;\,\sqrt{-s}\right)}\,\mathscr{\mathrm{v}}^{2}\, dx\, dt\label{78}
\end{equation}

Combining (\ref{75}), (\ref{76}), (\ref{77}), (\ref{78}), we conclude
that for $\left(y,\, s\right)\in Q\left(0,\,0;\,\frac{-1}{8M}\right)$,
\begin{equation}
\int_{Q\left(y,\, s;\,\sqrt{-s}\right)}\,\mathrm{v}^{2}\, dx\, dt\label{79}
\end{equation}

\noindent \resizebox{1.009\linewidth}{!}{
  \begin{minipage}{\linewidth}
  \begin{align*}
\leq\, C\left(n,\,\boldsymbol{\lambda},\, L,\,\sigma\right)\left(\frac{64\sigma}{\textrm{e}}\left(-sM\right)\right)^{M-1+\frac{n}{2}}\left(\parallel\partial_{x}\mathrm{v}\parallel^2_{L^{\infty}\left(B_{1}\times\left[-T,\,0\right]\right)}+\parallel\mathrm{v}\parallel^2_{L^{\infty}\left(B_{1}\times\left[-T,\,0\right]\right)}\right)
\end{align*}
  \end{minipage}
}

Now let $\beta=\frac{1}{2}\left(\frac{64\sigma}{\textrm{e}}\right)^{-1}$.
For each $\left(y,\, s\right)\in B_{\nicefrac{1}{4}}^{n}\times[\frac{-\beta}{4L^{2}\left(n+\sigma\right)}T,\,0)$,
we choose $M=\frac{\beta}{-s}$ so that $M\geq\frac{4L^{2}\left(n+\sigma\right)}{T}$
(and note that $\frac{-1}{8M}\leq s<0$). By (\ref{79}), we get
\begin{equation}
\fint_{Q\left(y,\, s;\,\sqrt{-s}\right)}\,|\mathscr{\mathrm{v}}|\, dx\, dt\, \leq\, \left(\fint_{Q\left(y,\, s;\,\sqrt{-s}\right)}\,\mathscr{\mathrm{v}}^{2}\, dx\, dt\,\right)^{\frac{1}{2}}\label{80}
\end{equation}

\noindent \resizebox{1.009\linewidth}{!}{
  \begin{minipage}{\linewidth}
  \begin{align*}
\leq\, C\left(n,\,\Lambda,\, L,\,\sigma\right)\sqrt{\left(-s\right)^{-\frac{n}{2}-1}\left(\frac{1}{2}\right)^{-\frac{\beta}{s}-1+\frac{n}{2}}}\left(\parallel\partial_{x}\mathscr{\mathrm{v}}\parallel_{L^{\infty}\left(B_{1}\times\left[-T,\,0\right]\right)}+\parallel\mathrm{v}\parallel_{L^{\infty}\left(B_{1}\times\left[-T,\,0\right]\right)}\right)
\end{align*}
  \end{minipage}
}

\[
\leq C\left(n,\,\boldsymbol{\lambda},\, L,\,\sigma\right)\left(2^{\frac{\beta}{4}}\right)^{\frac{1}{s}}\left(\parallel\partial_{x}\mathrm{v}\parallel_{L^{\infty}\left(B_{1}\times\left[-T,\,0\right]\right)}+\parallel\mathrm{v}\parallel_{L^{\infty}\left(B_{1}\times\left[-T,\,0\right]\right)}\right)
\]\\
Let $\boldsymbol{\alpha}=\frac{\beta}{4L^{2}\left(n+\sigma\right)}$,
$\boldsymbol{\Lambda}=\max\left\{ C\left(n,\,\boldsymbol{\lambda},\, L,\,\sigma\right),\,\left(\frac{\beta}{4}\ln2\right)^{-1}\right\} $,
then (\ref{71}) follows from (\ref{80}) and Lemma \ref{l16}.
\end{proof}
Combining Proposition \ref{p14}, Proposition \ref{p15} with Proposition \ref{p20}, we could
show the exponential decay of $h_{t}=h\left(\cdot,\, t\right)$ as $t\nearrow0$ as in \cite{W} (see also \cite{N}). 
\begin{prop}[Exponential decay of the deviation]\label{p22}
There exist\\ $R=R\left(\Sigma,\,\tilde{\Sigma},\mathcal{\, C},\, U,\,\parallel F\parallel_{C^{3}\left(U\right)},\,\lambda,\,\varkappa\right)\geq1$,
$\Lambda=\Lambda\left(n,\,\mathcal{C},\,\parallel F\parallel_{C^{3}\left(U\right)},\,\lambda\right)>0$,
$\alpha=\alpha\left(n,\,\mathcal{C},\,\parallel F\parallel_{C^{3}\left(U\right)},\,\lambda\right)\in\left(0,\,1\right)$
such that for $X\in\Sigma_{t}\setminus\bar{B}_{R},\; t\in[-\alpha,\,0)$,
there holds
\[
|\nabla_{\Sigma_{t}}h|\,+\,|h|\,\leq\,\Lambda\,\exp\left(\frac{|X|{}^{2}}{\Lambda t}\right)
\]
\end{prop}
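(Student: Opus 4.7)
The plan is to apply Proposition \ref{p20} (case \eqref{70}) in a parabolically rescaled local chart around each far-away point of $\Sigma_t$. The key inputs are the parabolic equation \eqref{53} with vanishing initial data \eqref{54}, the coefficient estimates of Proposition \ref{p15}, and the decay \eqref{52} of $h_t$.

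First, fix $t^\ast \in [-1, 0)$ and $X^\ast \in \Sigma_{t^\ast}$ with $r \equiv |X^\ast| \geq R$. Using the vertical parametrization of Section 3 together with the normal/radial reparametrization from the proof of Proposition \ref{p14}, set up a local coordinate chart $y \in B_{\rho r}^n$ on $\Sigma_t$, valid for $t \in [-1, 0]$, with $y = 0$ corresponding to $X^\ast$ at time $t^\ast$. Pulling back \eqref{53} in these coordinates gives
\[
\bigl|\partial_t h - \partial_i(a^{ij}(y, t)\, \partial_j h)\bigr| \leq C\bigl(r^{-1}|\partial_y h| + r^{-2}|h|\bigr),\qquad h(y, 0) = 0,
\]
where the Christoffel/tangential drift terms have been absorbed into the right-hand side. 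By Proposition \ref{p15}, $a^{ij}$ is uniformly positive definite, Lipschitz in $y$ with constant $O(r^{-1})$ (from \eqref{58}) and Lipschitz in $t$ with constant $O(r^{-2})$ (from \eqref{60}).

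Next, rescale parabolically: let $\bar{h}(\hat{y}, \hat{t}) = h(\rho r\, \hat{y},\, (\rho r)^2\, \hat{t})$ and $\bar{a}^{ij}(\hat{y}, \hat{t}) = a^{ij}(\rho r\, \hat{y},\, (\rho r)^2\, \hat{t})$ with $\hat{y} \in B_1^n$, $\hat{t} \in [-T, 0]$ and $T = (\rho r)^{-2}$. Multiplying the equation by $(\rho r)^2$ yields
\[
\bigl|\partial_{\hat{t}} \bar{h} - \partial_{\hat{i}}(\bar{a}^{ij}\, \partial_{\hat{j}} \bar{h})\bigr| \leq C\bigl(|\partial_{\hat{y}} \bar{h}| + |\bar{h}|\bigr) \leq L\Bigl(\tfrac{1}{\sqrt{T}}|\partial_{\hat{y}} \bar{h}| + \tfrac{1}{T}|\bar{h}|\Bigr),
\]
using $\tfrac{1}{\sqrt{T}},\, \tfrac{1}{T} \geq 1$. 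The rescaled coefficients $\bar{a}^{ij}$ have Lipschitz constants in $(\hat{y}, \hat{t})$ independent of $r$; an affine change of variables normalizes $\bar{a}^{ij}(0, 0) = \delta^{ij}$. Since $\bar{h}|_{\hat{t}=0} = 0$ by \eqref{54}, Proposition \ref{p20} applies and gives, at $\hat{y} = 0$ for $\hat{t} \in [-\boldsymbol{\alpha} T, 0)$ (equivalently $t \in [-\boldsymbol{\alpha}, 0)$),
\[
|h(X^\ast, t)| + |\nabla_{\Sigma_t} h(X^\ast, t)| \leq \boldsymbol{\Lambda}\, e^{(\rho r)^2/(\boldsymbol{\Lambda} t)}\, \bigl(\parallel\bar{h}\parallel_\infty + \parallel\partial_{\hat{y}} \bar{h}\parallel_\infty\bigr) \leq \frac{C}{r}\, e^{(\rho r)^2/(\boldsymbol{\Lambda} t)},
\]
where the last bound uses \eqref{52} together with $\parallel\partial_{\hat{y}} \bar{h}\parallel_\infty = \rho r\, \parallel\partial_y h\parallel_\infty \leq C/r$. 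Setting $\Lambda = \boldsymbol{\Lambda}/\rho^2$ (enlarged if needed) and absorbing the factor $C/r$ into the constant (valid for $R$ large enough) yields the desired inequality $|h| + |\nabla_{\Sigma_t} h| \leq \Lambda\, e^{|X|^2/(\Lambda t)}$.

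The main technical obstacle is the first step: constructing a local chart on $\Sigma_t$ valid for the entire interval $[-1, 0]$ and writing the parabolic equation of Proposition \ref{p14} -- stated abstractly via a normal parametrization -- in the explicit divergence form required by Proposition \ref{p20}, with the precise $r$-scaled coefficient estimates. In particular, the time regularity \eqref{60} of $\mathbf{a}$ is what furnishes the Hölder-in-time hypothesis of Proposition \ref{p20} after rescaling; once the chart and equation are in place, the remaining argument is a routine parabolic rescaling and a direct invocation of Proposition \ref{p20}.
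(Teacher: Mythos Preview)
Your proposal is correct and follows essentially the same route as the paper: construct a normal-parametrization chart near $\hat X$ valid for all $t\in[-1,0]$, write the equation \eqref{53} in local divergence form with the coefficient bounds from Proposition~\ref{p15}, parabolically rescale by $|\hat X|$, and invoke Proposition~\ref{p20} (case~\eqref{70}). The paper carries out exactly the step you flag as the main obstacle---passing from the vertical parametrization to the normal one via the time-dependent diffeomorphism generated by the tangential part of the velocity (equations \eqref{81}--\eqref{89}), not the radial-to-normal diffeomorphism of Proposition~\ref{p14}---and then verifies the metric and Christoffel estimates needed to reduce to \eqref{94}; once this is done, the rescaling and application of Proposition~\ref{p20} proceed as you describe.
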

\begin{proof}
Fix $\hat{X}\in\Sigma\setminus\bar{B}_{R}$ with $R=R\left(\Sigma,\,\tilde{\Sigma},\mathcal{\, C},\, U,\,\parallel F\parallel_{C^{3}\left(U\right)},\,\lambda,\,\varkappa\right)\geq1$,
we would first show that near $\hat{X}$, there is a ``normal
parametrization'' for the flow $\left\{ \Sigma_{t}\right\} $ for
$t\in\left[-1,\,0\right]$.

Recall that at the beginning of Section 3, we show that there exists
an uniform radius $\boldsymbol{\rho}=\boldsymbol{\rho}\left(n,\,\mathcal{C}\right)\in\left(0,\,1\right)$
so that near $\hat{X}$, each $\Sigma_{t}$ is the graph of the function
$\mathtt{u}_{t}=\mathtt{u}\left(\cdot,\, t\right)$ defined on $B_{\boldsymbol{\rho}|\hat{X}|}^{n}\subset T_{\hat{X}_{\mathcal{C}}}\mathcal{C}$
for $t\in\left[-1,\,0\right]$, where $\hat{X}_{\mathcal{C}}=\Pi\left(\hat{X}\right)$
is the the normal projection of $\hat{X}$ onto $\mathcal{C}$. Note also
that $|\hat{X}_{\mathcal{C}}|$ is comparable with $|\hat{X}|$. In
other words, locally near $\hat{X}$, we have the following ``vertical
parametrization'' of the flow $\left\{ \Sigma_{t}\right\} _{-1\leq t\leq0}$:
\[
X=X\left(x,\, t\right)\equiv\hat{X}_{\mathcal{C}}+\left(x,\,\mathtt{u}\left(x,\, t\right)\right)
\]
Here we assume that the unit-normal of $\mathcal{C}$ at $\hat{X}_{\mathcal{C}}$
to be $\left(0,\,1\right)$ for ease of notation. For this vertical
parametrization, we may decompose the velocity vector into the normal part
and the tangential part as follows:
\[
\partial_{t}X=F\left(A^{\#}\left(x,\, t\right)\right)N\left(x,\, t\right)\,+\,\sum_{i=1}^{n}\frac{\partial_{i}\mathtt{u}\,\partial_{t}\mathtt{u}}{1+|\partial_{x}\mathtt{u}|{}^{2}}\,\partial_{i}X
\]
where $A^{\#}\left(x,\, t\right)$, $N\left(x,\, t\right)$ are the
shape operator and the unit-normal of $\Sigma_{t}$ at $X\left(x,\, t\right)$,
respectively. Note that the normal part is from Definition \ref{d4}. 

Next, we would do suitable change of variables to go from
this ``vertical parametrization'' to the ``normal parametrization''
of the flow (see Definition \ref{d13}). For that purpose, we use the same
trick as in Proposition \ref{p14}.\\
Let $x=\phi_{t}\left(\xi\right)$ with
$\phi_{-1}=\textrm{id}$ to be the local diffeomorphism on $\Sigma_{t}$
generated by the following tangent vector field: 
\[
\mathcal{V}\left(x,\, t\right)=-\sum_{i=1}^{n}\frac{\partial_{i}\mathtt{u}\,\partial_{t}\mathtt{u}}{1+|\partial_{x}\mathtt{u}|{}^{2}}\,\partial_{i}X\,\equiv-\sum_{i=1}^{n}\mathcal{V}^{i}\left(x,\, t\right)\,\partial_{i}X
\]
That is, $\phi_{t}\left(\xi\right)=\phi\left(\xi,\, t\right)$ satisfies
\begin{equation}
\partial_{t}\phi_{t}=\left(\mathcal{V}^{1}\left(\phi_{t},\, t\right),\cdots,\,\mathcal{V}^{n}\left(\phi_{t},\, t\right)\right),\quad\phi_{-1}\left(\xi\right)=\xi\label{81}
\end{equation}
in which, by (\ref{9}) and (\ref{14}), we have

\begin{equation}
|\mathcal{V}^{i}|\,\leq\, C\left(n,\,\mathcal{C},\,\parallel F\parallel_{C^{3}\left(U\right)}\right)\,|\hat{X}|{}^{-1}\quad\forall\, i=1,\cdots,\, n\label{82}
\end{equation}
Thus, by taking $R$ sufficiently large, $\phi_{t}$ is well-defined
for $\xi\in B_{\frac{\boldsymbol{\rho}}{2}|\hat{X}|}^{n},\; t\in\left[-1,\,0\right]$.
It follows that the reparametrization $X=X\left(\phi_{t}\left(\xi\right),\, t\right)$
of the flow becomes a ``normal parametrization'' near $\hat{X}$
for $t\in\left[-1,\,0\right]$; that is, 
\[
\frac{\partial}{\partial t}\left(X\left(\phi_{t}\left(\xi\right),\, t\right)\right)=F\left(A^{\#}\left(\phi_{t}\left(\xi\right),\, t\right)\right)\, N\left(\phi_{t}\left(\xi\right),\, t\right)
\]

Let $g_{ij}\left(\xi,\, t\right)=\partial_{\xi_{i}}\left(X\left(\phi_{t}\left(\xi\right),\, t\right)\right)\cdot\partial_{\xi_{j}}\left(X\left(\phi_{t}\left(\xi\right),\, t\right)\right)$
be the pull-back metric associated with this ``normal parametrization'',
then by the evolution equation for the metric in \cite{A}, the homogeneity
of $F$ and the condition that $\phi_{-1}=\textrm{id}$, we have
\begin{equation}
\partial_{t}g_{ij}\left(\xi,\, t\right)\,=\,-2F\left(A^{\#}\left(\phi_{t}\left(\xi\right),\, t\right)\right)\, A_{ij}\left(\phi_{t}\left(\xi\right),\, t\right)\label{83}
\end{equation}
\[
=-2\Big|X\left(\phi_{t}\left(\xi\right),\, t\right)\Big|{}^{-1}F\left(\Big|X\left(\phi_{t}\left(\xi\right),\, t\right)\Big|\, A^{\#}\left(\phi_{t}\left(\xi\right),\, t\right)\right)\, A_{ij}\left(\phi_{t}\left(\xi\right),\, t\right)
\]
\begin{equation}
g_{ij}\left(\xi,\,-1\right)=\delta_{ij}+\partial_{i}\mathtt{u}\left(\xi,\,-1\right)\,\partial_{j}\mathtt{u}\left(\xi,\,-1\right)\label{84}
\end{equation}
where the second fundamental form $A_{t}\left(x\right)\sim A_{ij}\left(x,\, t\right)$
is equal to
\begin{equation}
A_{ij}\left(x,\, t\right)=\frac{\partial_{ij}^{2}\mathtt{u}\left(x,\, t\right)}{\sqrt{1+|\partial_{x}\mathtt{u}\left(x,\, t\right)|^{2}}}\label{85}
\end{equation}
By (\ref{85}), (\ref{6}), (\ref{7}), (\ref{8}), (\ref{17}) and
the comparability of $|X\left(x,\, t\right)|$ and $|\hat{X}|$, the
$\ell^{2}$ norm of the matrix $\partial_{t}g_{ij}\left(\xi,\, t\right)$
satisfies 
\begin{equation}
|\partial_{t}g_{ij}\left(\xi,\, t\right)|\,\leq\, C\left(n,\,\mathcal{C},\,\parallel F\parallel_{C^{1}\left(U\right)}\right)\,|\hat{X}|{}^{-2}\label{86}
\end{equation}
So by (\ref{84}), (\ref{6}), (\ref{8}) and (\ref{86}), the pull-back
metric $g_{ij}\left(\xi,\, t\right)$ is equivalent to the dot product
$\delta_{ij}$. 

Let $\Gamma_{ij}^{k}\left(\xi,\, t\right)$ be the Christoffel symbols
associated with the metric $g_{ij}\left(\xi,\, t\right)$, then we
have 
\begin{equation}
\partial_{t}\Gamma_{ij}^{k}\,=\,\frac{1}{2}g^{kl}\left(\nabla_{i}\overset{\boldsymbol{\cdot}}{g}_{lj}+\nabla_{j}\overset{\boldsymbol{\cdot}}{g}_{il}-\nabla_{l}\overset{\boldsymbol{\cdot}}{g}_{ij}\right)\label{87}
\end{equation}
 
\begin{equation}
\Gamma_{ij}^{k}\left(\xi,\,-1\right)\,=\,\frac{\partial_{k}\mathtt{u}\left(\xi,\,-1\right)\,\partial_{ij}^{2}\mathtt{u}\left(\xi,\,-1\right)}{1+|\partial_{x}\mathtt{u}\left(\xi,\,-1\right)|{}^{2}}\label{88}
\end{equation}
where 
$$\overset{\boldsymbol{\cdot}}{g}_{ij}=\partial_{t}g_{ij}=-2F\left(A^{\#}\right)A_{ij}$$
Similarly, and also by (\ref{20}), the homogeneity of the derivative
of $F$, the equivalence of $g_{ij}$ and $\delta_{ij}$, we have
\[
|\partial_{t}\Gamma_{ij}^{k}|\,\leq C\left(n,\,\mathcal{C},\,\parallel F\parallel_{C^{1}\left(U\right)}\right)\,|\hat{X}|{}^{-3}
\]
\[
|\Gamma_{ij}^{k}\left(\xi,\,-1\right)|\,\leq C\left(n,\,\mathcal{C},\,\parallel F\parallel_{C^{1}\left(U\right)}\right)\,|\hat{X}|{}^{-1}
\]
which implies 
\begin{equation}
|\Gamma_{ij}^{k}\left(\xi,\, t\right)|\,\leq C\left(n,\,\mathcal{C},\,\parallel F\parallel_{C^{1}\left(U\right)}\right)\,|\hat{X}|{}^{-1}\label{89}
\end{equation}

Now consider the deviation $h$ in the local coordinates $\left(\xi,\, t\right)$,
then the equation in Proposition \ref{p14} becomes
\begin{equation}
\Big|\partial_{t}h-\left\{ \partial_{\xi_{i}}\left(\mathbf{a}^{ij}\left(\xi,\, t\right)\partial_{\xi_{j}}h\right)+\Gamma_{ik}^{i}\left(\xi,\, t\right)\,\mathbf{a}^{kj}\left(\xi,\, t\right)\partial_{\xi_{j}}h\right\} \Big|\label{90}
\end{equation}
\[
\leq C\left(n,\,\mathcal{C},\,\parallel F\parallel_{C^{3}\left(U\right)}\right)\left(|\hat{X}|{}^{-1}|\partial_{\xi}h|\,+\,|\hat{X}|{}^{-2}|h|\right)
\]
\begin{equation*}
h\left(\xi,\,0\right)=0
\end{equation*}
where $\mathbf{a}^{ij}\left(\xi,\, t\right)=\mathbf{a}^{ji}\left(\xi,\, t\right)$
satisfies (by Proposition \ref{p15} and (\ref{89})) 

\noindent \resizebox{1.009\linewidth}{!}{
  \begin{minipage}{\linewidth}
  \begin{align}
\frac{\lambda}{C\left(n,\,\mathcal{C},\,\parallel F\parallel_{C^{3}\left(U\right)}\right)}\delta^{ij}\leq\,\frac{\lambda}{3}g^{ij}\left(\xi,\, t\right)\,\leq\,\mathbf{a}^{ij}\left(\xi,\, t\right)\,\leq\,\frac{3}{\lambda}g^{ij}\left(\xi,\, t\right)\,\leq\frac{C\left(n,\,\mathcal{C},\,\parallel F\parallel_{C^{3}\left(U\right)}\right)}{\lambda}\delta^{ij}\label{92}
\end{align}
  \end{minipage}
}\\

\begin{equation}
|\hat{X}|\Big|\partial_{\xi}\mathbf{a}^{ij}\left(\xi,\, t\right)\Big|\,+\,|\hat{X}|^{2}\Big|\partial_{t}\mathbf{a}^{ij}\left(\xi,\, t\right)\Big|\,\leq C\left(n,\,\mathcal{C},\,\parallel F\parallel_{C^{3}\left(U\right)},\,\lambda\right)\label{93}
\end{equation}
Thus, by (\ref{89}), (\ref{92}), (\ref{84}) and (\ref{86}), the
equation (\ref{90}) is equivalent to 
\begin{equation}
\Big|\partial_{t}h-\partial_{\xi_{i}}\left(\mathbf{a}^{ij}\left(\xi,\, t\right)\partial_{\xi_{j}}h\right)\Big|\,\label{94}
\end{equation}
\[
\leq C\left(n,\,\mathcal{C},\,\parallel F\parallel_{C^{3}\left(U\right)},\,\lambda\right)\left(|\hat{X}|{}^{-1}|\partial_{\xi}h|\,+\,|\hat{X}|{}^{-2}|h|\right)
\]
for $\left(\xi,\, t\right)\in B_{\frac{\boldsymbol{\rho}}{2}\mid\hat{X}\mid}^{n}\times\left[-1,\,0\right]$
\begin{equation*}
h\left(\xi,\,0\right)=0
\end{equation*}
\\
Let's consider the following change of variables:
\[
\left(\xi,\, t\right)=\Xi\left(\bar{\xi},\,\bar{t}\right)\equiv\left(\left(\frac{\boldsymbol{\rho}}{2}|\hat{X}|\right)\,\bar{\xi},\;\left(\frac{\boldsymbol{\rho}}{2}|\hat{X}|\right)^{2}\,\bar{t}\right)
\]
and let $\bar{h}=h\circ\Xi$, $\mathbf{\bar{a}}^{ij}=\mathbf{a}^{ij}\circ\Xi$.
Then (\ref{94}) in the new variables becomes 
\begin{equation}
\Big|\partial_{\bar{t}}\bar{h}-\partial_{\bar{\xi}_{i}}\left(\bar{\mathbf{a}}^{ij}\left(\bar{\xi,\,}\bar{t}\right)\partial_{\bar{\xi}_{j}}\bar{h}\right)\Big|\,\leq C\left(n,\,\mathcal{C},\,\parallel F\parallel_{C^{3}\left(U\right)},\,\lambda,\,\boldsymbol{\rho}\right)\left(|\partial_{\bar{\xi}}\bar{h}|\,+\,|\bar{h}|\right)\label{95}
\end{equation}
\begin{equation*}
\bar{h}\Big|_{\bar{t}=0}=0
\end{equation*}
and (\ref{92}), (\ref{93}) are translated into 
\begin{equation}
\frac{\lambda}{C\left(n,\,\mathcal{C},\,\parallel F\parallel_{C^{3}\left(U\right)}\right)}\delta^{ij}\,\leq\,\mathbf{\bar{a}}^{ij}\left(\bar{\xi,\,}\bar{t}\right)\,\leq\,\frac{C\left(n,\,\mathcal{C},\,\parallel F\parallel_{C^{3}\left(U\right)}\right)}{\lambda}\delta^{ij}\label{97}
\end{equation}
\begin{equation}
\Big|\partial_{\bar{\xi}}\mathbf{\bar{a}}^{ij}\left(\bar{\xi,\,}\bar{t}\right)\Big|\,+\,\Big|\partial_{\bar{t}}\mathbf{\bar{a}}^{ij}\left(\bar{\xi,\,}\bar{t}\right)\Big|\,\leq\, C\left(n,\,\mathcal{C},\,\parallel F\parallel_{C^{3}\left(U\right)},\,\lambda,\,\boldsymbol{\rho}\right)\label{98}
\end{equation}
for $\bar{\xi}\in B_{1}^{n},\;\bar{t}\in\left[-\left(\frac{\boldsymbol{\rho}}{2}|\hat{X}|\right)^{-2},\,0\right]$. 

Applying Proposition \ref{p20} to $\bar{h}\left(\bar{\xi},\,\bar{t}\right)$,
we may conclude that there exist $\tilde{\Lambda}=\tilde{\Lambda}\left(n,\,\mathcal{C},\,\parallel F\parallel_{C^{3}\left(U\right)},\,\lambda\right)>0$,
$\alpha=\alpha\left(n,\,\mathcal{C},\,\parallel F\parallel_{C^{3}\left(U\right)},\,\lambda\right)\in\left(0,\,1\right)$
for which the following holds:
\begin{equation}
|\partial_{\bar{\xi}}\bar{h}|\,+\,|\bar{h}|\label{99}
\end{equation}
\[
\leq\tilde{\Lambda}\,\exp\left(\frac{1}{\tilde{\Lambda}\bar{t}}\right)\,\left(\parallel\partial_{\bar{\xi}}\bar{h}\parallel_{L^{\infty}\left(B_{1}^{n}\times\left[-\left(\frac{\boldsymbol{\rho}}{2}\hat{|X}|\right)^{-2},\,0\right]\right)}+\parallel\bar{h}\parallel_{L^{\infty}\left(B_{1}^{n}\times\left[-\left(\frac{\boldsymbol{\rho}}{2}|\hat{X}|\right)^{-2},\,0\right]\right)}\right)
\]
for $\left(\bar{\xi},\,\bar{t}\right)\in B_{\nicefrac{1}{4}}^{n}\times[-\alpha\left(\frac{\boldsymbol{\rho}}{2}|\hat{X}|\right)^{-2},\,0)$.\\
Undoing change of variables, (\ref{99}) becomes
\begin{equation}
\frac{\boldsymbol{\rho}}{2}\,|\hat{X}|\,|\partial_{\xi}h|\,+\,|h|\label{100}
\end{equation}
\[
\leq\tilde{\Lambda}\,\exp\left(\frac{|\hat{X}|{}^{2}}{\tilde{\Lambda}t}\right)\,\left(\frac{\boldsymbol{\rho}}{2}|\hat{X}|\parallel\partial_{\xi}h\parallel_{L^{\infty}\left(B_{\frac{\boldsymbol{\rho}}{2}|\hat{X}|}^{n}\times\left[-1,\,0\right]\right)}+\parallel h\parallel_{L^{\infty}\left(B_{\frac{\boldsymbol{\rho}}{2}|\hat{X}|}^{n}\times\left[-1,\,0\right]\right)}\right)
\]
for $\left(\xi,\, t\right)\in B_{\frac{\boldsymbol{\rho}}{8}|\hat{X}|}^{n}\times[-\alpha,\,0)$.\\
Note that the pull-back metric $g_{ij}\left(\xi,\, t\right)$ is equivalent
to the dot product $\delta_{ij}$ and that $|X\left(x,\, t\right)|$
is comparable with $|\hat{X}|$. The conclusion follows immediately.
\end{proof}
Next, we would go from the exponential decay to identically vanishing
of the deviation $h$ outside a compact set. To this end, we have
to derive a different type of Carleman's inequality on the flow $\left\{ \Sigma_{t}\right\} _{-1\leq t\leq0}$,
which is done through two lemmas. The first lemma is a modification
of the integral equality in \cite{EF}.
\begin{lem}\label{l23}
Let $\left(\mathrm{M},\,\boldsymbol{g}_{t}\right)$ be a flow of Riemannian
manifolds and $P$ be a differential operator on the flow defined
by 
\[
P\mathrm{v}=\partial_{t}\mathrm{v}-\nabla_{\boldsymbol{g}_{t}}\cdot\left(a_{t}\, d\mathrm{v}\right)=\partial_{t}\mathrm{v}-\nabla_{i}\left(a^{ij}\left(\cdot,\, t\right)\nabla_{j}\mathrm{v}\right)
\]
where $a_{t}=a(\cdot,\, t)$ is a symmetric 2-tensor on $\left(\mathrm{M},\,\boldsymbol{g}_{t}\right)$.
Then given functions $G,\,\Psi\in C^{2,1}\left(\mathrm{M}\times\left[-T,\,0\right]\right)$
with $G>0$, define a function $\Phi$ as 
\begin{equation}
\Phi=\frac{\partial_{t}G+\nabla_{i}\left(a^{ij}\nabla_{j}G\right)+\,\frac{1}{2}\textrm{tr}\left(\partial_{t}\boldsymbol{g}\right)\, G}{G}\label{101}
\end{equation}
\[
=\partial_{t}\ln G\,+\,\nabla_{i}\left(a^{ij}\nabla_{j}\ln G\right)\,+\, a^{ij}\nabla_{i}\ln G\,\nabla_{j}\ln G\,+\,\frac{1}{2}\textrm{tr}\left(\partial_{t}\boldsymbol{g}\right)
\]
and a 2-tensor $\Upsilon$ as
\begin{equation}
\Upsilon^{ij}=a^{ik}a^{jl}\nabla_{kl}^{2}\ln G\,-\,\frac{1}{2}\partial_{t}a^{ij}\label{102}
\end{equation}
\[
+\,\frac{1}{2}\left(a^{ik}\nabla_{k}a^{jl}+a^{jk}\nabla_{k}a^{il}-a^{lk}\nabla_{k}a^{ij}\right)\,\nabla_{l}\ln G
\]
It follows that for any $\mathscr{\mathrm{u}}\in C_{\textrm{c}}^{2,1}\left(\mathrm{M}\times\left[-T,\,0\right]\right)$,
there holds

\noindent \resizebox{1.009\linewidth}{!}{
  \begin{minipage}{\linewidth}
  \begin{align*}
\int_{\mathrm{\mathrm{M}}}\left\{ \left(2\Upsilon^{ij}-\left(\Phi-\Psi\right)a^{ij}\right)\nabla_{i}\mathrm{u}\,\nabla_{j}\mathscr{\mathrm{u}}\,+\,\frac{1}{2}\left(\partial_{t}\Psi-\nabla_{i}\left(a^{ij}\nabla_{j}\Psi\right)+\left(\Phi-\Psi\right)\Psi\right)\mathscr{\mathrm{u}}^{2}\right\} G\, d\mu_{t}
\end{align*}
  \end{minipage}
}

\begin{equation}
=\{\int_{\mathrm{M}}2\, P\mathscr{\mathrm{u}}\,\left(\partial_{t}\mathscr{\mathrm{u}}+\, a^{ij}\nabla_{i}\ln G\,\nabla_{j}\mathscr{\mathrm{u}}+\,\frac{1}{2}\Psi\mathscr{\mathrm{u}}\right)G\, d\mu_{t}\label{103}
\end{equation}
\[
-\int_{\mathrm{M}}2\left(\partial_{t}\mathscr{\mathrm{u}}+\, a^{ij}\nabla_{i}\ln G\,\nabla_{j}\mathscr{\mathrm{u}}+\,\frac{1}{2}\Psi\mathscr{\mathrm{u}}\right)^{2}G\, d\mu_{t}
\]
\[
-\partial_{t}\left\{ \int_{\mathrm{M}}\left(a^{ij}\nabla_{i}\mathrm{u}\,\nabla_{j}\mathrm{u}\,-\,\frac{1}{2}\Psi\mathscr{\mathrm{u}}^{2}\right)G\, d\mu_{t}\right\} \}
\]
where $\mu_{t}$ is the volume form of \textup{$\left(\mathrm{M},\,\boldsymbol{g}_{t}\right)$.}\end{lem}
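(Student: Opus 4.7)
The identity (\ref{103}) is a time-dependent version of the classical Escauriaza--Fern\'andez computation, and we organize the proof by decomposing $P$ into its symmetric and antisymmetric parts with respect to the weighted inner product $\langle u,v\rangle=\int uv\,G\,d\mu_{t}$. Write
\[
\mathcal{S}u\,\equiv\,\partial_{t}u+a^{ij}\nabla_{i}\ln G\,\nabla_{j}u+\tfrac{1}{2}\Psi u,
\]
which is precisely the parenthetical appearing in the first term on the right-hand side of (\ref{103}). Then $Pu-\mathcal{S}u=-\nabla_{i}(a^{ij}\nabla_{j}u)-a^{ij}\nabla_{i}\ln G\,\nabla_{j}u-\tfrac{1}{2}\Psi u$, and multiplying the tautology $Pu=\mathcal{S}u+(Pu-\mathcal{S}u)$ by $2\mathcal{S}u\cdot G$ and integrating yields
\[
\int 2Pu\cdot\mathcal{S}u\cdot G\,d\mu_{t}-\int 2(\mathcal{S}u)^{2}G\,d\mu_{t}\,=\,\int 2(Pu-\mathcal{S}u)\cdot\mathcal{S}u\cdot G\,d\mu_{t}.
\]
The task reduces to evaluating this cross term as the left-hand side of (\ref{103}) plus the time-boundary contribution $\partial_{t}\{\int(a^{ij}\nabla_{i}u\nabla_{j}u-\tfrac{1}{2}\Psi u^{2})G\,d\mu_{t}\}$.

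\textbf{Key computations.} Expand $\int 2(Pu-\mathcal{S}u)\,\mathcal{S}u\cdot G\,d\mu_{t}$ into the nine pairwise products between $\{-\nabla_{i}(a^{ij}\nabla_{j}u),\,-a^{ij}\nabla_{i}\ln G\,\nabla_{j}u,\,-\tfrac{1}{2}\Psi u\}$ and $\{\partial_{t}u,\,a^{ij}\nabla_{i}\ln G\,\nabla_{j}u,\,\tfrac{1}{2}\Psi u\}$, and process each. The time-boundary term is extracted from the $(\nabla_{i}(a^{ij}\nabla_{j}u),\partial_{t}u)$ pair via spatial integration by parts followed by
\[
2a^{ij}\nabla_{j}u\,\nabla_{i}\partial_{t}u\cdot G\,d\mu_{t}\,=\,\partial_{t}\bigl(a^{ij}\nabla_{i}u\nabla_{j}u\cdot G\,d\mu_{t}\bigr)-(\partial_{t}a^{ij})\nabla_{i}u\nabla_{j}u\cdot G\,d\mu_{t}-a^{ij}\nabla_{i}u\nabla_{j}u\cdot\partial_{t}(G\,d\mu_{t}),
\]
together with the identity $\partial_{t}(G\,d\mu_{t})=\Phi G\,d\mu_{t}-\nabla_{i}(a^{ij}\nabla_{j}G)\,d\mu_{t}$ read off from (\ref{101}); this absorbs the measure-variation into $\Phi$ and produces the $-\Phi a^{ij}$ and $-\tfrac{1}{2}\partial_{t}a^{ij}$ contributions to the coefficient of $\nabla_{i}u\nabla_{j}u$, the latter matching the second summand in (\ref{102}). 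The leftover $\nabla_{i}(a^{ij}\nabla_{j}G)$ piece, after another integration by parts, combines with the $(\nabla_{i}(a^{ij}\nabla_{j}u),\,a^{kl}\nabla_{k}\ln G\,\nabla_{l}u)$ pair (processed by spatial IBP, symmetrization in the indices of $a$, and the scalar Schwarz identity $\nabla_{i}\nabla_{l}u=\nabla_{l}\nabla_{i}u$) to form both the Hessian piece $2a^{ik}a^{jl}\nabla^{2}_{kl}\ln G$ and the first-derivative-of-$a$ piece $(a^{ik}\nabla_{k}a^{jl}+a^{jk}\nabla_{k}a^{il}-a^{lk}\nabla_{k}a^{ij})\nabla_{l}\ln G$ of $2\Upsilon^{ij}$ in (\ref{102}). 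The $(\nabla_{i}(a^{ij}\nabla_{j}u),\,\tfrac{1}{2}\Psi u)$ and $(\tfrac{1}{2}\Psi u,\partial_{t}u)$ pairs contribute the $-\nabla_{i}(a^{ij}\nabla_{j}\Psi)$ and $\partial_{t}\Psi$ summands in the coefficient of $u^{2}$, together with the $-\partial_{t}\{\int\tfrac{1}{2}\Psi u^{2}G\,d\mu_{t}\}$ part of the boundary term. Finally, the purely algebraic products $-2(a^{ij}\nabla_{i}\ln G\,\nabla_{j}u)^{2}G$, $-2(a^{ij}\nabla_{i}\ln G\,\nabla_{j}u)\Psi u\,G$, and $-\tfrac{1}{2}\Psi^{2}u^{2}G$ combine with the $\Phi$-pieces already extracted to produce the $+\Psi a^{ij}$ and $+\tfrac{1}{2}(\Phi-\Psi)\Psi$ adjustments in the two coefficients on the left-hand side of (\ref{103}), using the full expansion of $\Phi$ from the second line of (\ref{101}).

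\textbf{Main obstacle.} The principal difficulty is the bookkeeping on the evolving manifold $(\mathrm{M},\boldsymbol{g}_{t})$: the volume form evolves at rate $\tfrac{1}{2}\mathrm{tr}(\partial_{t}\boldsymbol{g})\,d\mu_{t}$, the coefficient tensor $a^{ij}$ carries its own $t$-dependence, and $\partial_{t}$ does not commute with the raised-index conventions implicit in writing $a^{ij}=g^{ik}g^{jl}a_{kl}$. The identity works because the specific combinations chosen in the definitions of $\Phi$ and $\Upsilon$ in (\ref{101}) and (\ref{102}) are engineered precisely to absorb all of these metric-variation and weight-variation contributions into a single scalar and a single $2$-tensor respectively; once these definitions are used, the identity follows from integration by parts and the product rule, with no new idea beyond the static-manifold version recorded in \cite{EF}.
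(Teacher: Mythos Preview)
Your proposal is correct and follows essentially the same computation as the paper. The only organizational difference is the starting point: the paper begins from $\partial_{t}\{\int_{\mathrm{M}}a^{ij}\nabla_{i}u\,\nabla_{j}u\,G\,d\mu_{t}\}$ and works forward through successive integrations by parts (equations (\ref{104})--(\ref{111})) until the $Pu$ and $\mathcal{S}u$ structure emerges, whereas you begin from the decomposition $Pu=\mathcal{S}u+(Pu-\mathcal{S}u)$ and expand the cross term $\int 2(Pu-\mathcal{S}u)\,\mathcal{S}u\,G\,d\mu_{t}$ directly into its nine constituents. Both routes pass through the same integrations by parts, the same use of (\ref{101}) to absorb $\partial_{t}(G\,d\mu_{t})$ into $\Phi$, and the same identification of the $\Upsilon$ tensor via (\ref{102}); your framing via the (formal) symmetric/antisymmetric splitting is the more standard presentation in the Carleman-estimate literature and makes the structure of the identity transparent from the outset, while the paper's energy-derivative starting point is slightly more ad hoc but arrives at the same place.
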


\begin{proof}
Let's begin with
\begin{equation}
\partial_{t}\left\{ \int_{M}a^{ij}\nabla_{i}\mathscr{\mathrm{u}}\,\nabla_{j}\mathscr{\mathrm{u}}\, G\, d\mu_{t}\right\} \label{104}
\end{equation}

\noindent \resizebox{1.009\linewidth}{!}{
  \begin{minipage}{\linewidth}
  \begin{align*}
=\int_{\mathrm{\mathrm{M}}}\left\{ 2a^{ij}\nabla_{j}\mathscr{\mathrm{u}}\,\nabla_{i}\partial_{t}\mathscr{\mathrm{u}}\, G+\, a^{ij}\nabla_{i}\mathscr{\mathrm{u}}\,\nabla_{j}\mathscr{\mathrm{u}}\left(\partial_{t}G+\frac{1}{2}\textrm{tr}\left(\partial_{t}g\right)\, G\right)+\,\partial_{t}a^{ij}\nabla_{i}\mathscr{\mathrm{u}}\,\nabla_{j}\mathscr{\mathrm{u}}\, G\right\} d\mu_{t}
\end{align*}
  \end{minipage}
}\\\\
in which we use the commutativity:
\[
\partial_{t}\, d\mathscr{\mathrm{u}}=d\,\partial_{t}\mathscr{\mathrm{u}},\quad \textrm{where}\quad d\mathrm{u}\sim\nabla_{i}\mathscr{\mathrm{u}}
\]
and the evolution equation of the volume form:
\begin{equation}
\partial_{t}\, d\mu_{t}=\frac{1}{2}\textrm{tr}\left(\partial_{t}\boldsymbol{g}\right)\, d\mu_{t}\label{105}
\end{equation}

Applying integration by parts on $\left(\mathrm{M},\,\boldsymbol{g}_{t}\right)$,
(\ref{104}) becomes 

\noindent \resizebox{1.009\linewidth}{!}{
  \begin{minipage}{\linewidth}
  \begin{align*}
\{\int_{\mathrm{\mathrm{M}}}-2\left(\nabla_{i}\left(a^{ij}\nabla_{j}\mathscr{\mathrm{u}}\right)+\, a^{ij}\nabla_{i}\ln G\,\nabla_{j}\mathscr{\mathrm{u}}\right)\partial_{t}\mathscr{\mathrm{u}}\, G\, d\mu_{t}+\int_{\mathrm{\mathrm{M}}}a^{ij}\nabla_{i}\mathscr{\mathrm{u}}\,\nabla_{j}\mathrm{u}\left(\partial_{t}G+\nabla_{k}\left(a^{kl}\nabla_{l}G\right)+\,\frac{1}{2}\textrm{tr}\left(\partial_{t}g\right)\, G\right)\, d\mu_{t}
\end{align*}
  \end{minipage}
}

\begin{equation}
-\int_{\mathrm{\mathrm{M}}}a^{ij}\nabla_{i}\mathrm{u}\,\nabla_{j}\mathrm{u}\,\nabla_{k}\left(a^{kl}\nabla_{l}G\right)\, d\mu_{t}+\int_{\mathrm{\mathrm{M}}}\partial_{t}a^{ij}\nabla_{i}\mathscr{\mathrm{u}}\,\nabla_{j}\mathscr{\mathrm{u}}\, G\, d\mu_{t}\}\label{106}
\end{equation}
By (\ref{101}), integration by parts twice and the symmetry of $a_{t}$,
(\ref{106}) becomes
\begin{equation}
\{-2\int_{\mathrm{\mathrm{M}}}\left(\nabla_{i}\left(a^{ij}\nabla_{j}\mathscr{\mathrm{u}}\right)+\, a^{ij}\nabla_{i}\ln G\,\nabla_{j}\mathrm{u}\right)\partial_{t}\mathscr{u}\, G\, d\mu_{t}+\int_{\mathrm{\mathrm{M}}}a^{ij}\nabla_{i}\mathscr{\mathrm{u}}\,\nabla_{j}\mathscr{\mathrm{u}}\,\Phi G\, d\mu_{t}\label{107}
\end{equation}

\noindent \resizebox{1.009\linewidth}{!}{
  \begin{minipage}{\linewidth}
  \begin{align*}
+\int_{\mathrm{\mathrm{M}}}\left\{ \nabla_{k}a^{ij}\nabla_{i}\mathscr{\mathrm{u}}\,\nabla_{j}\mathscr{\mathrm{u}}\, a^{kl}\nabla_{l}\ln G-\,2\nabla_{j}\left(a^{ij}\nabla_{i}\mathscr{\mathrm{u}}\right)\nabla_{k}\mathrm{u}\, a^{kl}\nabla_{l}\ln G-\,2a^{ij}\nabla_{i}\mathscr{\mathrm{u}}\,\nabla_{k}\mathscr{\mathrm{u}}\,\nabla_{j}a^{kl}\nabla_{l}\ln G\right\} G\, d\mu_{t}
\end{align*}
  \end{minipage}
}

\[
-2\int_{\mathrm{\mathrm{M}}}a^{ij}\nabla_{i}\mathscr{\mathrm{u}}\,\nabla_{k}\mathscr{\mathrm{u}}\, a^{kl}\nabla_{jl}^{2}G\, d\mu_{t}+\,\int_{\mathrm{\mathrm{M}}}\partial_{t}a^{ij}\nabla_{i}\mathrm{u}\,\nabla_{j}\mathscr{\mathrm{u}}\, G\, d\mu_{t}\}
\] 

Then we reorganize (\ref{107}) (in order to make up the term $P\mathscr{u}$) to get

\noindent \resizebox{1.009\linewidth}{!}{
  \begin{minipage}{\linewidth}
  \begin{align}
\{2\int_{\mathrm{\mathrm{M}}}\left\{ \left(\partial_{t}\mathrm{u}-\nabla_{i}\left(a^{ij}\nabla_{j}\mathrm{u}\right)\right)\left(\partial_{t}\mathrm{u}+a^{kl}\nabla_{k}\ln G\,\nabla_{l}\mathscr{\mathrm{u}}\right)-\,\left(\partial_{t}\mathscr{\mathrm{u}}\right)^{2}-\,2a^{ij}\nabla_{i}\ln G\,\nabla_{j}\mathscr{\mathrm{u}}\,\partial_{t}\mathscr{\mathrm{u}}\right\} G\, d\mu_{t}\label{108}
\end{align}
  \end{minipage}
}

\[
+\int_{\mathrm{\mathrm{M}}}\Phi a^{ij}\nabla_{i}\mathscr{\mathrm{u}}\,\nabla_{j}\mathscr{\mathrm{u}}\, G\, d\mu_{t}-\,2\int_{\mathrm{\mathrm{M}}}a^{ij}a^{kl}\left(\nabla_{jl}^{2}\ln G+\nabla_{j}\ln G\,\nabla_{l}\ln G\right)\nabla_{i}\mathscr{\mathrm{u}}\,\nabla_{k}\mathscr{\mathrm{u}}\, G\, d\mu_{t}
\]

\noindent \resizebox{1.009\linewidth}{!}{
  \begin{minipage}{\linewidth}
  \begin{align*}
+\int_{\mathrm{\mathrm{M}}}\left\{ a^{kl}\nabla_{k}a^{ij}\,\nabla_{l}\ln G\nabla_{i}\mathscr{\mathrm{u}}\,\nabla_{j}\mathrm{u}-\,2a^{ij}\nabla_{j}a^{kl}\,\nabla_{l}\ln G\,\nabla_{i}\mathrm{u}\,\nabla_{k}\mathscr{\mathrm{u}}+\,\partial_{t}a^{ij}\nabla_{i}\mathscr{\mathrm{u}}\,\nabla_{j}\mathscr{\mathrm{u}}\right\} G\, d\mu_{t}\}
\end{align*}
  \end{minipage}
}\\\\
By (\ref{102}), (\ref{108}) becomes 

\noindent \resizebox{1.009\linewidth}{!}{
  \begin{minipage}{\linewidth}
  \begin{align*}
\{2\int_{\mathrm{\mathrm{M}}}\left\{ \left(\partial_{t}\mathscr{\mathrm{u}}-\nabla_{i}\left(a^{ij}\nabla_{j}\mathrm{u}\right)\right)\left(\partial_{t}\mathrm{u}+a^{kl}\nabla_{k}\ln G\,\nabla_{l}\mathrm{u}\right)-\,\left(\partial_{t}\mathscr{\mathrm{u}}+a^{ij}\nabla_{i}\ln G\,\nabla_{j}\mathscr{\mathrm{u}}\right)^{2}\right\} G\, d\mu_{t}
\end{align*}
  \end{minipage}
}

\[
+\int_{\mathrm{\mathrm{M}}}\Phi a^{ij}\nabla_{i}\mathscr{\mathrm{u}}\,\nabla_{j}\mathscr{\mathrm{u}}\, G\, d\mu_{t}-\,2\int_{\mathrm{\mathrm{M}}}\Upsilon^{ij}\nabla_{i}\mathscr{\mathrm{u}}\,\nabla_{j}\mathscr{\mathrm{u}}\, G\, d\mu_{t}\}
\]

\noindent \resizebox{1.009\linewidth}{!}{
  \begin{minipage}{\linewidth}
  \begin{align*}
=\{2\int_{\mathrm{\mathrm{M}}}P\mathscr{\mathrm{u}}\,\left(\partial_{t}\mathscr{\mathrm{u}}+\, a^{ij}\nabla_{i}\ln G\,\nabla_{j}\mathscr{\mathrm{u}}+\,\frac{1}{2}\Psi\mathscr{\mathrm{u}}\right)G\, d\mu_{t}-\,\int_{\mathrm{\mathrm{M}}}\left(\partial_{t}\mathrm{u}-\,\nabla_{i}\left(a^{ij}\nabla_{j}\mathrm{u}\right)\right)\Psi\mathscr{\mathrm{u}}\, G\, d\mu_{t}
\end{align*}
  \end{minipage}
}

\noindent \resizebox{1.009\linewidth}{!}{
  \begin{minipage}{\linewidth}
  \begin{align*}
-2\int_{\mathrm{\mathrm{M}}}\left(\partial_{t}\mathscr{\mathrm{u}}+\, a^{ij}\nabla_{i}\ln G\,\nabla_{j}\mathrm{u}+\frac{1}{2}\Psi\mathscr{\mathrm{u}}\right)^{2}G\, d\mu_{t}+2\int_{\mathrm{\mathrm{M}}}\left(\partial_{t}\mathscr{\mathrm{u}}\,+a^{ij}\nabla_{i}\ln G\,\nabla_{j}\mathscr{\mathrm{u}}+\,\frac{1}{2}\Psi\mathscr{\mathrm{u}}\right)\Psi\mathscr{\mathrm{u}}\, G\, d\mu_{t}
\end{align*}
  \end{minipage}
}

\begin{equation}
-\frac{1}{2}\int_{\mathrm{\mathrm{M}}}\Psi^{2}\mathscr{\mathrm{u}}^{2}G\, d\mu_{t}-\int_{\mathrm{\mathrm{M}}}\left(2\Upsilon^{ij}-\Phi a^{ij}\right)\nabla_{i}\mathscr{\mathrm{u}}\,\nabla_{j}\mathscr{\mathrm{u}}\, G\, d\mu_{t}\}\label{109}
\end{equation}
For the second term of (\ref{109}), by the product rule and integration
by parts, we get
\begin{equation}
-\int_{\mathrm{\mathrm{M}}}\left(\partial_{t}\mathrm{u}-\,\nabla_{i}\left(a^{ij}\nabla_{j}\mathrm{u}\right)\right)\mathscr{\mathrm{u}}\,\Psi G\, d\mu_{t}\label{110}
\end{equation}
\[
=-\frac{1}{2}\int_{\mathrm{\mathrm{M}}}\left(\partial_{t}\mathscr{\mathrm{u}}^{2}-\,\nabla_{i}\left(a^{ij}\nabla_{j}\mathscr{\mathrm{u}}^{2}\right)+\,2a^{ij}\nabla_{i}\mathscr{\mathrm{u}}\,\nabla_{j}\mathscr{\mathrm{u}}\right)\Psi G\, d\mu_{t}
\]
\[
=\{\frac{1}{2}\int_{\mathrm{\mathrm{M}}}\left(\partial_{t}\Psi\, G+\,\Psi\left(\partial_{t}G+\frac{1}{2}\textrm{tr}\left(\partial_{t}g\right)\, G\right)\right)\mathscr{\mathrm{u}}^{2}\, d\mu_{t}-\,\partial_{t}\left(\int_{\mathrm{\mathrm{M}}}\frac{1}{2}\Psi^{2}\mathscr{\mathrm{u}}^{2}G\, d\mu_{t}\right)
\]

\noindent \resizebox{1.009\linewidth}{!}{
  \begin{minipage}{\linewidth}
  \begin{align*}
-\,\int_{\mathrm{\mathrm{M}}}a^{ij}\nabla_{i}\mathscr{\mathrm{u}}\,\nabla_{j}\mathscr{\mathrm{u}}\,\Psi G\, d\mu_{t}
\,+\,\frac{1}{2}\int_{\mathrm{\mathrm{M}}}\left\{ \nabla_{j}\left(a^{ij}\nabla_{i}\Psi\right)G+\,2a^{ij}\nabla_{i}G\,\nabla_{j}\Psi+\,\Psi\nabla_{j}\left(a^{ij}\nabla_{i}G\right)\right\} \mathscr{\mathrm{u}}^{2}\, d\mu_{t}\}
\end{align*}
  \end{minipage}
}

\[
=\{\frac{1}{2}\int_{\mathrm{\mathrm{M}}}\left(\partial_{t}\Psi+\,\nabla_{j}\left(a^{ij}\nabla_{i}\Psi\right)+\,\Phi\Psi+\, a^{ij}\nabla_{i}\ln G\,\nabla_{j}\Psi\right)\mathscr{\mathrm{u}}^{2}G\, d\mu_{t}
\]
\[
-\,\int_{\mathrm{\mathrm{M}}}\Psi a^{ij}\nabla_{i}\mathscr{\mathrm{u}}\,\nabla_{j}\mathscr{\mathrm{u}}\, G\, d\mu_{t}
\,-\,\partial_{t}\left(\int_{\mathrm{\mathrm{M}}}\frac{1}{2}\Psi^{2}\mathscr{\mathrm{u}}^{2}G\, d\mu_{t}\right)\}
\]
Likewise, for the fourth term of (\ref{109}), we have 
\begin{equation}
2\int_{\mathrm{\mathrm{M}}}\left(\partial_{t}\mathscr{\mathrm{u}}+\, a^{ij}\nabla_{i}\ln G\,\nabla_{j}\mathscr{\mathrm{u}}+\,\frac{1}{2}\Psi\mathscr{\mathrm{u}}\right)\Psi\mathscr{\mathrm{u}}\, G\, d\mu_{t}\label{111}
\end{equation}
\[
=\int_{\mathrm{\mathrm{M}}}\partial_{t}\mathscr{\mathrm{u}}^{2}\,\Psi G\, d\mu_{t}+\,\int_{\mathrm{\mathrm{M}}}a^{ij}\nabla_{i}G\,\nabla_{j}\mathscr{\mathrm{u}}^{2}\,\Psi\, d\mu_{t}+\,\int_{\mathrm{\mathrm{M}}}\Psi^{2}\mathrm{u}^{2}G\, d\mu_{t}
\]
\[
=\{-\int_{\mathrm{\mathrm{M}}}\left(\partial_{t}\Psi\, G+\,\Psi\left(\partial_{t}G+\,\frac{1}{2}\textrm{tr}\partial_{t}g\, G\right)\right)\mathscr{\mathrm{u}}^{2}\, d\mu_{t}+\,\partial_{t}\left(\int_{\mathrm{\mathrm{M}}}\Psi\mathscr{\mathrm{u}}^{2}G\, d\mu_{t}\right)
\]
\[
+\,\int_{\mathrm{\mathrm{M}}}\Psi^{2}\mathscr{\mathrm{u}}^{2}G\, d\mu_{t}\,-\,\int_{\mathrm{\mathrm{M}}}\left(\nabla_{j}\left(a^{ij}\nabla_{i}G\right)\Psi+\, a^{ij}\nabla_{i}G\,\nabla_{j}\Psi\right)\mathscr{\mathrm{u}}^{2}\, d\mu_{t}\}
\]
\[
=-\int_{\mathrm{\mathrm{M}}}\left(\partial_{t}\Psi+\,\Phi\Psi+\, a^{ij}\nabla_{i}\ln G\,\nabla_{j}\Psi-\,\Psi^{2}\right)\mathrm{u}^{2}G\, d\mu_{t}+\,\partial_{t}\left(\int_{\mathrm{\mathrm{M}}}\Psi\mathscr{\mathrm{u}}^{2}G\, d\mu_{t}\right)
\]
Combining (\ref{109}), (\ref{110}), (\ref{111}) to get (\ref{103}).
\end{proof}
We hereafter consider the Riemannian manifold in Lemma \ref{l23} to be the
time-slice $\Sigma_{t}$ with the induced metric $g_{t}$, which evolves
(in ``normal parametrization'') like 
$$\partial_{t}g=-2F\left(A^{\#}\right)A$$
(see \cite{A}). The differential operator in Lemma \ref{l23} is taken to be
the one in Proposition \ref{p14}.

For the second lemma, we would choose suitable weight function $G$
and auxiliary function $\Psi$ in Lemma \ref{l23} in order to bound the LHS
of (\ref{103}) from below. The choice of $G$ is due to \cite{ESS}
and \cite{W}. As for $\Psi$, it is not shown in \cite{W} but is
used here in order to deal with the last term in (\ref{102}), which comes
from the nonlinear nature of $F$. Note that in
the linear case when $F\left(S\right)=\textrm{tr}\left(S\right)$
(see \cite{W}), the coefficients of the differential operator in
Proposition \ref{p14} becomes $\mathbf{a}^{ij}=g^{ij}$; besides, (\ref{102})
is reduced to 
\[
\Upsilon^{ij}=g^{ik}g^{jl}\nabla_{kl}^{2}\ln G\,-\, HA^{ij}
\]
The idea of using an auxiliary function for the nonlinear case is motivated by \cite{N}. 

\begin{lem}\label{l24}
Assume that $\varkappa\leq6^{-4}\lambda^{3}$ in (\ref{1}) and (\ref{2}).
Then there exists $R=R\left(\Sigma,\,\tilde{\Sigma},\mathcal{\, C},\, U,\,\parallel F\parallel_{C^{3}\left(U\right)},\,\lambda,\,\varkappa\right)\geq1$
so that for any constants $M\geq1,\;\tau\in(0,\,1]$, let
\begin{equation}
G=\exp\left(M\left(t+\tau\right)|X|^{\frac{3}{2}}+\,|X|^{2}\right)\label{112}
\end{equation}
\begin{equation}
\Psi=\{\left(\frac{3}{2}M\left(t+\tau\right)|X|^{-\frac{1}{2}}+2\right)^{2}\mathbf{a}^{ij}\left(X\cdot\partial_{i}X\right)\left(X\cdot\partial_{j}X\right)+\, M|X|^{\frac{3}{2}}\label{113}
\end{equation}
\[
+\frac{1}{2}\left(\frac{3}{2}M\left(t+\tau\right)|X|^{-\frac{1}{2}}+2\right)\left(\textrm{tr}\left(\mathbf{a}\right)-\frac{\lambda}{3}\right)
\]
\[
+\left(\textrm{tr}\left(\mathbf{a}\right)-\frac{\lambda}{3}\right)+\,\frac{3}{4}M\left(t+\tau\right)|X|^{-\frac{5}{2}}\left(\textrm{tr}\left(\mathbf{a}\right)|X|{}^{2}-\,\mathbf{a}^{ij}\left(X\cdot\partial_{i}X\right)\left(X\cdot\partial_{j}X\right)\right)\}
\]
(note that $G>0$ and $\Psi\geq0$), there hold
\begin{equation}
2\Upsilon^{ij}-\left(\Phi-\Psi\right)\mathbf{a}^{ij}\,\geq\,\frac{\lambda^{2}}{9}g^{ij}\label{114}
\end{equation}
\begin{equation}
\frac{1}{2}\left(\partial_{t}\Psi-\,\nabla_{i}\left(\mathbf{a}^{ij}\nabla_{j}\Psi\right)+\left(\,\Phi-\Psi\right)\Psi\right)\,\geq\,\frac{\lambda^{2}}{9}|X|{}^{2}\label{115}
\end{equation}
for $X\in\Sigma_{t}\setminus\bar{B}_{R},\; t\in[-\tau,\,0)$, where
\textup{$\textrm{tr}\left(\mathbf{a}\right)=g_{ij}\mathbf{a}^{ij}$,}
$\Phi$ and \textup{$\Upsilon^{ij}$} are defined in (\ref{101})
and (\ref{102}), respectively, with the covariant derivative is taken
w.r.t $\Sigma_{t}$, $\partial_{t}g=-2F\left(A^{\#}\right)A$, and
$a^{ij}=\mathbf{a}^{ij}$. \end{lem}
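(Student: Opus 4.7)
The proof will be a direct verification. I would substitute the explicit weight $G$ and auxiliary function $\Psi$ into the definitions (\ref{101})--(\ref{102}), expand every term using standard geometric identities together with the self-shrinker structure, and then check that $\Psi$ has been engineered precisely to absorb all the non-positive contributions appearing in $\Phi$. The plan proceeds in three stages.

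First, I would assemble closed-form derivative formulas for $\ln G = M(t+\tau)|X|^{3/2} + |X|^2$. On $\Sigma_t$ one has $\nabla_i |X|^k = k|X|^{k-2}(X\cdot\partial_i X)$ and $\nabla^2_{ij}|X|^k = k(k-2)|X|^{k-4}(X\cdot\partial_i X)(X\cdot\partial_j X) + k|X|^{k-2}(g_{ij} - A_{ij}(X\cdot N))$, using $\nabla_i\partial_j X = -A_{ij} N$. Combining with the shrinker identity $X\cdot N = 2tF(A^{\#})$ and the normal-parametrization evolutions $\partial_t X = F(A^{\#})N$, $\partial_t g_{ij} = -2F(A^{\#})A_{ij}$, this gives closed-form expressions for $\nabla_i\ln G$, $\nabla^2_{ij}\ln G$ and $\partial_t\ln G$. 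Setting $\alpha := \frac{3}{2}M(t+\tau)|X|^{-1/2} + 2$, one finds in particular $\nabla_i \ln G = \alpha\,(X\cdot\partial_i X)$, with leading part of $\nabla^2_{ij}\ln G$ equal to $\alpha\, g_{ij}$.

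Second, for (\ref{114}) I would expand $\Phi$ using (\ref{101}) and compare summand by summand with $\Psi$ in (\ref{113}). The four pieces of $\Psi$ are designed to cancel, in order: the quadratic gradient term $\mathbf{a}^{ij}\nabla_i\ln G\,\nabla_j\ln G$; the contribution $M|X|^{3/2}$ from $\partial_t\ln G$; the portion of $\mathbf{a}^{ij}\nabla^2_{ij}\ln G + \frac12\mathrm{tr}(\partial_t g)$ that scales like $\alpha\,(\mathrm{tr}(\mathbf{a}) - \lambda/3)$; and the anisotropic $A_{ij}(X\cdot N)$ correction carrying the factor $\frac{3}{4}M(t+\tau)|X|^{-5/2}$. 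After these cancellations, $\Phi - \Psi$ reduces to a non-negative leading expression plus controlled errors, and the dominant positive contribution in $2\Upsilon^{ij} - (\Phi - \Psi)\mathbf{a}^{ij}$ then comes from $2\mathbf{a}^{ik}\mathbf{a}^{jl}\alpha g_{kl} = 2\alpha\,\mathbf{a}^{ik}\mathbf{a}_k^{\,j} \geq 2\alpha(\lambda/3)^2 g^{ij}$ by Proposition \ref{p15}. The residual error terms involve $\nabla\mathbf{a}$, $\partial_t\mathbf{a}$, and $\mathbf{a}\,\nabla\mathbf{a}\,\nabla\ln G$ (coming from the last line of (\ref{102})); their moduli are bounded by $\varkappa$ and negative powers of $|X|$ via (\ref{58})--(\ref{60}), so the hypothesis $\varkappa \leq 6^{-4}\lambda^{3}$ together with $R$ large keeps them strictly below $(\lambda^{2}/9) g^{ij}$, yielding (\ref{114}).

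Third, for (\ref{115}) the same expansion machinery gives $\partial_t\Psi$ and $\nabla_i(\mathbf{a}^{ij}\nabla_j\Psi)$ of sizes at worst $C M^2(t+\tau)|X|^{3/2}$ and $C|X|$ respectively, once Proposition \ref{p15} and the shrinker identity are applied. The dominant positive contribution in $(\Phi - \Psi)\Psi$ comes from the $M|X|^{3/2}$ leftover in $\Phi - \Psi$ multiplied against the first summand of $\Psi$; lower-bounding $\mathbf{a}^{ij}(X\cdot\partial_i X)(X\cdot\partial_j X) \geq (\lambda/3)\bigl(|X|^2 - (X\cdot N)^2\bigr) \geq (\lambda/6)|X|^2$, where the last step uses $|X\cdot N| = 2|t||F(A^{\#})| \leq C|X|^{-1}$ from (\ref{20}) and $R$ large, produces a term of order $M|X|^{7/2}$ which easily absorbs all other contributions and dominates $\frac{\lambda^{2}}{9}|X|^{2}$. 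The main obstacle throughout is purely combinatorial: several dozen terms appear once every differentiation is carried out, and verifying both the designed cancellations and the quantitative smallness of each residual requires patient bookkeeping rather than any new geometric input. The smallness assumption $\varkappa \leq 6^{-4}\lambda^{3}$ is precisely what makes the nonlinear (non-MCF) contributions fit inside the margin left by the positive leading terms, which is where the proof crucially departs from the linear case of \cite{W}.
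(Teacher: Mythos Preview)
Your overall plan---compute the derivatives of $\ln G$, expand $\Phi$ and $\Upsilon$, verify the designed cancellations with $\Psi$, and control residuals via Proposition~\ref{p15} and the smallness hypothesis---matches the paper, and your Stages~1--2 coincide with equations (\ref{116})--(\ref{122}).

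Stage~3, however, misreads the balance of terms and would not close. You correctly note in Stage~2 that the $M|X|^{3/2}$ piece of $\Psi$ cancels the $M|X|^{3/2}$ from $\partial_t\ln G$ in $\Phi$; there is therefore no ``$M|X|^{3/2}$ leftover in $\Phi-\Psi$.'' The actual computation (equation (\ref{120}) in the paper) gives, with $\alpha:=\tfrac{3}{2}M(t+\tau)|X|^{-1/2}+2$,
\[
\Phi-\Psi=\tfrac{\lambda}{6}\alpha+\tfrac{\lambda}{3}+O(\alpha\varkappa)+O(|X|^{-2}),
\]
so the dominant positive part of $(\Phi-\Psi)\Psi$ is $\alpha^{3}\tfrac{\lambda^{2}}{36}|X|^{2}+\alpha^{2}\tfrac{\lambda^{2}}{27}|X|^{2}$ (from $\alpha\lambda$ against the first summand of $\Psi$), not $M|X|^{7/2}$. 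This matters because the errors in $\partial_t\Psi$ and $\nabla_i(\mathbf a^{ij}\nabla_j\Psi)$ both carry terms of order $\alpha^{2}\,O(1)$, which can grow like $M^{2}$ as $M\to\infty$; your bounds ``at worst $CM^{2}(t+\tau)|X|^{3/2}$'' and ``$C|X|$'' miss this (the first even vanishes at $t=-\tau$, where these errors do not). Those $\alpha^{2}O(1)$ terms are absorbed precisely by the $\alpha^{2}|X|^{2}$ and $\alpha^{3}|X|^{2}$ pieces of $(\Phi-\Psi)\Psi$, not by any $M|X|^{7/2}$. A second omission: $\partial_t\Psi$ is not pure error. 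Differentiating the first summand of $\Psi$ in $t$ produces a leading \emph{positive} contribution $\alpha\cdot 3M|X|^{-1/2}\cdot \mathbf a^{kl}(X\cdot\partial_kX)(X\cdot\partial_lX)\gtrsim \alpha\lambda M|X|^{3/2}$, which the paper keeps (see (\ref{129})). Rework Stage~3 with the correct $\Phi-\Psi$ and organize the estimate by powers of $\alpha$ rather than of $M$ and $|X|$ separately; the bookkeeping then closes exactly as in (\ref{126})--(\ref{130}).
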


\begin{rem}\label{r25}
In view of (\ref{58}), the hypothesis that $\varkappa\leq6^{-4}\lambda^{3}$
amounts to requiring the smallness of $|X|\,|\nabla_{\Sigma_{t}}\mathbf{a}|$
(compared with the ellipticity of $\mathbf{a}$). Similar hypothesis
also appears in \cite{N} and \cite{WZ} when using Carleman's inequalities
to prove the backward uniqueness of parabolic equations. 
\end{rem}

\begin{proof}
Let's start with computing the covariant derivatives of $\ln G$:
\begin{equation}
\nabla_{i}\ln G=\left(\frac{3}{2}M\left(t+\tau\right)|X|{}^{-\frac{1}{2}}+2\right)\left(X\cdot\partial_{i}X\right)\label{116}
\end{equation}
\begin{equation}
\nabla_{ij}^{2}\ln G=\{\left(\frac{3}{2}M\left(t+\tau\right)|X|{}^{-\frac{1}{2}}+2\right)\left(g_{ij}+\, X\cdot N\, A_{ij}\right)\label{117}
\end{equation}
\[
-\frac{3}{4}M\left(t+\tau\right)|X|^{-\frac{5}{2}}\left(|X|{}^{2}g_{ij}-\,\left(X\cdot\partial_{i}X\right)\left(X\cdot\partial_{j}X\right)\right)
\]
\[
+2t\left(\frac{3}{2}M\left(t+\tau\right)|X|{}^{-\frac{1}{2}}+2\right)F\left(A^{\#}\right)A_{ij}\}
\]
and its evolution 
\begin{equation}
\partial_{t}\ln G=M|X|{}^{\frac{3}{2}}+\left(\frac{3}{2}M\left(t+\tau\right)|X|^{-\frac{1}{2}}+2\right)\left(X\cdot\partial_{t}X\right)\label{118}
\end{equation}
\[
=M|X|^{\frac{3}{2}}+2t\left(\frac{3}{2}M\left(t+\tau\right)|X|{}^{-\frac{1}{2}}+2\right)F\left(A^{\#}\right)^{2}
\]
in which we use the $F$ curvature flow equation in normal parametrization
(see Definition \ref{d13}): 
\[
\partial_{t}X=F\left(A^{\#}\right)N
\]
and the $F$ self-shrinker equation for $\Sigma_{t}=\sqrt{-t}\,\Sigma$
(in Definition \ref{d4}): 
\[
X\cdot N=2tF\left(A^{\#}\right)
\]
Thus, by (\ref{101}), (\ref{116}), (\ref{117}) and (\ref{118}),
we have 

\begin{equation}
\Phi=\{\left(\frac{3}{2}M\left(t+\tau\right)|X|{}^{-\frac{1}{2}}+2\right)^{2}\mathbf{a}^{ij}\left(X\cdot\partial_{i}X\right)\left(X\cdot\partial_{j}X\right)+\, M|X|^{\frac{3}{2}}\label{119}
\end{equation}
\[
+\frac{1}{2}\left(\frac{3}{2}M\left(t+\tau\right)|X|{}^{-\frac{1}{2}}+2\right)\textrm{tr}\left(\mathbf{a}\right)
\]
\[
+\textrm{tr}\left(\mathbf{a}\right)+\,\frac{3}{4}M\left(t+\tau\right)|X|{}^{-\frac{5}{2}}\left(\textrm{tr}\left(\mathbf{a}\right)|X|{}^{2}-\,\mathbf{a}^{ij}\left(X\cdot\partial_{i}X\right)\left(X\cdot\partial_{j}X\right)\right)
\]

\noindent \resizebox{1.009\linewidth}{!}{
  \begin{minipage}{\linewidth}
  \begin{align*}
  +\left(\frac{3}{2}M\left(t+\tau\right)|X|{}^{-\frac{1}{2}}+2\right)\left\{ \left(\nabla_{i}\mathbf{a}^{ij}\right)\left(X\cdot\partial_{j}X\right)+\,2tF\left(A^{\#}\right)\left(F\left(A^{\#}\right)+\mathbf{a}^{ij}A_{ij}\right)\right\} -\, F\left(A^{\#}\right)H\}
\end{align*}
  \end{minipage}
}\\
which, together with (\ref{113}), implies that 
\begin{equation}
\Phi-\Psi=\{\frac{\lambda}{2}\left(\frac{3}{2}M\left(t+\tau\right)|X|{}^{-\frac{1}{2}}+2\right)+\,\frac{\lambda}{3}\label{120}
\end{equation}

\noindent \resizebox{1.009\linewidth}{!}{
  \begin{minipage}{\linewidth}
  \begin{align*}
+\left(\frac{3}{2}M\left(t+\tau\right)|X|{}^{-\frac{1}{2}}+2\right)\left\{ \left(\nabla_{k}\mathbf{a}^{kl}\right)\left(X\cdot\partial_{l}X\right)+2tF\left(A^{\#}\right)\left(F\left(A^{\#}\right)+\mathbf{a}^{kl}A_{kl}\right)\right\} -\, F\left(A^{\#}\right)H\}
\end{align*}
  \end{minipage}
}\\\\

By (\ref{102}), (\ref{116}), (\ref{117}) and (\ref{120}), 
\begin{equation}
2\Upsilon^{ij}-\left(\Phi-\Psi\right)\mathbf{a}^{ij}=\{\left(\frac{3}{2}M\left(t+\tau\right)|X|{}^{-\frac{1}{2}}+2\right)\left(\mathbf{a}^{ik}\mathbf{a}^{jl}g_{kl}-\,\frac{\lambda}{6}\mathbf{a}^{ij}\right)\label{121}
\end{equation}
\[
+\left(2\mathbf{a}^{ik}\mathbf{a}^{jl}g_{kl}-\frac{\lambda}{3}\mathbf{a}^{ij}\right)+\,\frac{3}{2}M\left(t+\tau\right)|X|^{-\frac{5}{2}}\mathbf{a}^{ik}\mathbf{a}^{jl}\left(|X|{}^{2}g_{kl}-\,\left(X\cdot\partial_{k}X\right)\left(X\cdot\partial_{l}X\right)\right)
\]

\noindent \resizebox{1.009\linewidth}{!}{
  \begin{minipage}{\linewidth}
  \begin{align*}
+\left(\frac{3}{2}M\left(t+\tau\right)|X|{}^{-\frac{1}{2}}+2\right)\left\{ \mathbf{a}^{ik}\nabla_{k}\mathbf{a}^{jl}+\mathbf{a}^{jk}\nabla_{k}\mathbf{a}^{il}-\mathbf{a}^{lk}\nabla_{k}\mathbf{a}^{ij}-\mathbf{a}^{ij}\nabla_{k}\mathbf{a}^{kl}\right\} \left(X\cdot\partial_{l}X\right)
\end{align*}
  \end{minipage}
}

\[
+\left(\frac{3}{2}M\left(t+\tau\right)|X|{}^{-\frac{1}{2}}+2\right)\left(2\mathbf{a}^{ik}\mathbf{a}^{jl}A_{kl}-\mathbf{a}^{ij}\mathbf{a}^{kl}A_{kl}-F\left(A^{\#}\right)\mathbf{a}^{ij}\right)2t\, F\left(A^{\#}\right)
\]
\[
-\partial_{t}\mathbf{a}^{ij}+\, F\left(A^{\#}\right)H\,\mathbf{a}^{ij}\}
\]
which can be estimated from below (using (\ref{57}), (\ref{58}),
(\ref{60}), (\ref{17}), (\ref{20}) and the homogeneity of $F$) by
\begin{equation}
2\Upsilon^{ij}-\left(\Phi-\Psi\right)\mathbf{a}^{ij}\,\geq\,\{\left(\frac{3}{2}M\left(t+\tau\right)|X|{}^{-\frac{1}{2}}+2\right)\left(\left(\frac{\lambda^{2}}{18}-36\frac{\varkappa}{\lambda}\right)g^{ij}+O\left(|X|{}^{-2}\right)\right)\label{122}
\end{equation}
\[
+\frac{\lambda^{2}}{9}g^{ij}\,+O\left(|X|{}^{-2}\right)\}
\]
where the notation $O\left(|X|^{-2}\right)$ means that 
\[
\Big|O\left(|X|^{-2}\right)\Big|\,\leq\, C\left(n,\,\mathcal{C},\,\parallel F\parallel_{C^{3}\left(U\right)}\right)\,|X|{}^{-2}
\]
Then (\ref{114}) follows from (\ref{112}) and the hypothesis ($\varkappa\leq6^{-4}\lambda^{3}$)
provided that $R\gg1$ (independing of $M$ and $\tau$). 

On the other hand, by (\ref{57}), (\ref{58}), (\ref{17}), (\ref{20}),
the homogeneity of $F$, the hypothesis that $\varkappa\leq6^{-4}\lambda^{3}$
(note that $\lambda\in(0,\,1]$) and $R\gg1$ (independent of $M$
and $\tau$), we can estimate (\ref{120}) from below by 
\[
\Phi-\Psi\geq\left(\frac{3}{2}M\left(t+\tau\right)|X|{}^{-\frac{1}{2}}+2\right)\left(\frac{\lambda}{6}\,-3\varkappa\,+O\left(|X|{}^{-2}\right)\right)+\frac{\lambda}{3}+O\left(|X|{}^{-2}\right)
\]
\begin{equation}
\geq\left(\frac{3}{2}M\left(t+\tau\right)|X|{}^{-\frac{1}{2}}+2\right)\frac{\lambda}{9}+\,\frac{\lambda}{6}\label{123}
\end{equation}
Similarly, from the $F$ self-shrinker equation for $\Sigma_{t}$,
the tangential component of the position vector can be estimated by
\begin{equation}
|X^{\top}|^{2}=|X|{}^{2}-\left(X\cdot N\right){}^{2}=|X|{}^{2}-\left(2t\, F\left(A^{\#}\right)\right)^{2}\label{124}
\end{equation}
\[
=|X|^{2}-\left(2t\, F\left(|X|\, A^{\#}\right)\right)^{2}|X|{}^{-2}=\,|X|^{2}+O\left(|X|^{-2}\right)
\]
Consequently, (\ref{113}) can be estimated from below, using (\ref{57}) and (\ref{124}), by
\begin{equation}
\Psi\geq\left(\frac{3}{2}M\left(t+\tau\right)|X|{}^{-\frac{1}{2}}+2\right)^{2}\mathbf{a}^{ij}\left(X\cdot\partial_{i}X\right)\left(X\cdot\partial_{j}X\right)+\, M\,|X|{}^{\frac{3}{2}}\label{125}
\end{equation}
\[
\geq\left(\frac{3}{2}M\left(t+\tau\right)|X|{}^{-\frac{1}{2}}+2\right)^{2}\left(\frac{\lambda}{3}|X|{}^{2}+O\left(|X|^{-2}\right)\right)+\, M\,|X|^{\frac{3}{2}}
\]
Multiply (\ref{123}) and (\ref{125}) to get 
\begin{equation}
\left(\Phi-\Psi\right)\Psi\,\geq\,\{\left(\frac{3}{2}M\left(t+\tau\right)|X|{}^{-\frac{1}{2}}+2\right)^{3}\frac{1}{36}\lambda^{2}|X|{}^{2}\label{126}
\end{equation}
 
 \noindent \resizebox{1.009\linewidth}{!}{
  \begin{minipage}{\linewidth}
  \begin{align*}
+\left(\frac{3}{2}M\left(t+\tau\right)|X|{}^{-\frac{1}{2}}+2\right)^{2}\frac{1}{27}\lambda^{2}|X|^{2}\,+\left(\frac{3}{2}M\left(t+\tau\right)|X|{}^{-\frac{1}{2}}+2\right)\frac{\lambda}{9}M\,|X|{}^{\frac{3}{2}}+\,\frac{\lambda}{6}M\,|X|{}^{\frac{3}{2}}\}
\end{align*}
  \end{minipage}
}\\\\

To achieve (\ref{115}), let's first rearrange (\ref{113}) to get
\begin{equation}
\Psi=\{\left(\frac{3}{2}M\left(t+\tau\right)|X|{}^{-\frac{1}{2}}+2\right)^{2}\mathbf{a}^{kl}\left(X\cdot\partial_{k}X\right)\left(X\cdot\partial_{l}X\right)+\, M\,|X|{}^{\frac{3}{2}}\label{127}
\end{equation}
\[
+\left(\frac{3}{2}M\left(t+\tau\right)|X|^{-\frac{1}{2}}+2\right)\left(\textrm{tr}\left(\mathbf{a}\right)-\frac{\mathbf{a}^{kl}\left(X\cdot\partial_{k}X\right)\left(X\cdot\partial_{l}X\right)}{2|X|{}^{2}}-\,\frac{\lambda}{6}\right)
\]
\[
+\frac{\mathbf{a}^{kl}\left(X\cdot\partial_{k}X\right)\left(X\cdot\partial_{l}X\right)}{|X|^{2}}-\,\frac{\lambda}{3}\}
\]
Then we take time-derivative of (\ref{127}) and estimate the result
by Proposition \ref{p15}, (\ref{17}), (\ref{20}), the homogeneity
of $F$ and its derivatives, the $F$ self-shrinker equation for $\Sigma_{t}$
(i.e. $X\cdot N=2tF\left(A^{\#}\right)$) and the $F$ curvature flow
equation (i.e. $\partial_{t}X=F\left(A^{\#}\right)N$)), and also
assuming that $R\gg1$ (depending on $\lambda$). Note that we could
simplify the computation by taking ``normal coordinates'' of $\Sigma_{t}$.
For instance, let's compute and estimate the time-derivative of the
first term in (\ref{127}) as follows:
\begin{equation}
\partial_{t}\left\{ \left(\frac{3}{2}M\left(t+\tau\right)|X|{}^{-\frac{1}{2}}+2\right)^{2}\mathbf{a}^{kl}\left(X\cdot\partial_{k}X\right)\left(X\cdot\partial_{l}X\right)\right\} \label{128}
\end{equation}

 \noindent \resizebox{1.009\linewidth}{!}{
  \begin{minipage}{\linewidth}
  \begin{align*}
=\{2\left(\frac{3}{2}M\left(t+\tau\right)|X|{}^{-\frac{1}{2}}+2\right)\left\{ \frac{3}{2}M\,|X|^{-\frac{1}{2}}+\,\frac{3}{2}M\left(t+\tau\right)\left(-\frac{1}{2}|X|^{-\frac{3}{2}}\right)\frac{X\cdot F\left(A^{\#}\right)N}{|X|}\right\} \mathbf{a}^{kl}\left(X\cdot\partial_{k}X\right)\left(X\cdot\partial_{l}X\right)
\end{align*}
  \end{minipage}
}

 \noindent \resizebox{1.009\linewidth}{!}{
  \begin{minipage}{\linewidth}
  \begin{align*}
+\left(\frac{3}{2}M\left(t+\tau\right)|X|^{-\frac{1}{2}}+2\right)^{2}\,\left\{ \left(\partial_{t}\mathbf{a}^{kl}\right)\left(X\cdot\partial_{k}X\right)\left(X\cdot\partial_{l}X\right)+\,2\mathbf{a}^{kl}\left(X\cdot\partial_{k}X\right)\left(X\cdot\partial_{l}\left(F\left(A^{\#}\right)N\right)\right)\right\} \}
\end{align*}
  \end{minipage}
}\\

By taking normal coordinates, we may assume (at the point of consideration) that $g_{ij}=\delta_{ij}$ (so the norm is Proposition \ref{p15}
becomes $\ell^{2}$ norm), $\left\{ \partial_{1}X,\cdots,\,\partial_{n}X,\, N\right\} $
is an orthonormal basis for $\mathbb{R}^{n+1}$, and the last term
in (\ref{128}) can be computed and estimated by 
\[
\partial_{l}\left(F\left(A^{\#}\right)N\right)=\frac{\partial F}{\partial S_{i}^{j}}\left(A^{\#}\right)\,\left(\partial_{l}A_{i}^{j}\right)N\,+\, F\left(A^{\#}\right)\left(-A_{l}^{k}\,\partial_{k}X\right)
\]
\[
=\frac{\partial F}{\partial S_{i}^{j}}\left(|X|\, A^{\#}\right)\,\left(\nabla_{l}A_{i}^{j}\right)N\,+\,|X|{}^{-1}F\left(|X|\, A^{\#}\right)\left(-A_{l}^{k}\,\partial_{k}X\right)=O\left(|X|{}^{-2}\right)
\]\\
Thus,  (\ref{128}) can be estimated by
\[
\left(\frac{3}{2}M\left(t+\tau\right)|X|{}^{-\frac{1}{2}}+2\right)\left(3M|X|{}^{-\frac{1}{2}}+M\cdot O\left(|X|{}^{-\frac{9}{2}}\right)\right)\,\mathbf{a}^{kl}\left(X\cdot\partial_{k}X\right)\left(X\cdot\partial_{l}X\right)
\]
\[
+\left(\frac{3}{2}M\left(t+\tau\right)|X|{}^{-\frac{1}{2}}+2\right)^{2}O\left(1\right)
\]\\
Doing the same thing to other terms in (\ref{127}) leads to

 \noindent \resizebox{1.009\linewidth}{!}{
  \begin{minipage}{\linewidth}
  \begin{align}\label{129}
\partial_{t}\Psi=\{\left(\frac{3}{2}M\left(t+\tau\right)|X|^{-\frac{1}{2}}+2\right)\left(3M|X|{}^{-\frac{1}{2}}+M\cdot O\left(|X|{}^{-\frac{9}{2}}\right)\right)\,\mathbf{a}^{kl}\left(X\cdot\partial_{k}X\right)\left(X\cdot\partial_{l}X\right)
\end{align}
  \end{minipage}
}

 \noindent \resizebox{1.009\linewidth}{!}{
  \begin{minipage}{\linewidth}
  \begin{align*}
+\left(\frac{3}{2}M\left(t+\tau\right)|X|{}^{-\frac{1}{2}}+2\right)^{2}O\left(1\right)\,+\, M\cdot O\left(|X|^{-\frac{1}{2}}\right)+\,\left(\frac{3}{2}M\left(t+\tau\right)|X|{}^{-\frac{1}{2}}+2\right)O\left(|X|^{-2}\right)\,+\, O\left(|X|{}^{-2}\right)\}
\end{align*}
  \end{minipage}
}

 \noindent \resizebox{1.009\linewidth}{!}{
  \begin{minipage}{\linewidth}
  \begin{align*}
\geq\left(\frac{3}{2}M\left(t+\tau\right)|X|{}^{-\frac{1}{2}}+2\right)\left(\frac{2}{3}\lambda M|X|^{\frac{3}{2}}\right)\,+\,\left(\frac{3}{2}M\left(t+\tau\right)|X|{}^{-\frac{1}{2}}+2\right)^{2}O\left(1\right)\,+\, M\cdot O\left(|X|^{-\frac{1}{2}}\right)
\end{align*}
  \end{minipage}
}\\\\

Similarly, we can compute $\nabla_{i}\left(\mathbf{a}^{ij}\nabla_{j}\Psi\right)$
and estimate it by 
\begin{equation}
\nabla_{i}\left(\mathbf{a}^{ij}\nabla_{j}\Psi\right)=\mathbf{a}^{ij}\nabla_{ij}^{2}\Psi+\,\left(\nabla_{i}\mathbf{a}^{ij}\right)\left(\nabla_{j}\Psi\right)\label{130}
\end{equation}

 \noindent \resizebox{1.009\linewidth}{!}{
  \begin{minipage}{\linewidth}
  \begin{align*}
=\left(\frac{3}{2}M\left(t+\tau\right)|X|{}^{-\frac{1}{2}}+2\right)^{2}O\left(1\right)\,+\,\left(\frac{3}{2}M\left(t+\tau\right)|X|^{-\frac{1}{2}}+2\right)O\left(|X|{}^{-2}\right)\,+\: M\cdot O\left(|X|{}^{-\frac{1}{2}}\right)
\end{align*}
  \end{minipage}
} \\
Then (\ref{115}) follows from (\ref{126}), (\ref{129}) and (\ref{130}).
\end{proof}

Using the above two lemmas, we can derive the following Carleman's
inequality on the flow $\left\{ \Sigma_{t}\right\} _{-1\leq t\leq0}$
(with $\Sigma_{0}=\mathcal{C}$).

\begin{prop}[Carleman's inequality]\label{p26}
Assume that $\varkappa\leq6^{-4}\lambda^{3}$ in (\ref{4}) and (\ref{5}).
Then there exists $R\geq1$ (depending on $\Sigma,\,\tilde{\Sigma},\mathcal{\, C},\, U,\,\parallel F\parallel_{C^{3}\left(U\right)},\,\lambda,\,\varkappa$)
so that for any constants $M\geq1$, $\tau\in(0,\,1]$, and one-parameter
family of $C^{2}$ functions $\mathscr{\mathrm{u}}_{t}=\mathscr{\mathrm{u}}\left(\cdot,\, t\right)$
which is compactly supported in $\Sigma_{t}\setminus\bar{B}_{R}$
for each $t\in[-\tau,\,0]$ and is differentiable in time, there holds
\begin{equation}
\frac{\lambda^{2}}{9}\int_{-\tau}^{0}\int_{\Sigma_{t}}\left(|\nabla_{\Sigma_{t}}\mathscr{\mathrm{u}}|^{2}+\mathscr{\mathrm{u}}^{2}\right)G\, d\mathcal{H}^{n}dt\label{131}
\end{equation}
\[
\leq\,\{\int_{-\tau}^{0}\int_{\Sigma_{t}}|\mathbf{P}\mathscr{\mathrm{u}}|{}^{2}G\, d\mathcal{H}^{n}dt\,+\,\frac{3}{\lambda}\int_{\Sigma_{-\tau}}|\nabla_{\Sigma_{-\tau}}\mathscr{\mathrm{u}}_{-\tau}|^{2}G\left(\cdot,\,-\tau\right)\, d\mathcal{H}^{n}
\]
\[
+\frac{1}{2}\int_{\mathcal{C}}\Psi\left(\cdot,\,0\right)\,\mathscr{\mathrm{u}}^{2}\left(\cdot,\,0\right)\, G\left(\cdot,\,0\right)\, d\mathcal{H}^{n}\}
\]
where $\mathcal{H}^{n}$ is the $n-$dimensional Hausdorff measure;
$\mathbf{P}$, $G$ and $\Psi$ are defined in (\ref{53}), (\ref{112}),
(\ref{113}), respectively. \end{prop}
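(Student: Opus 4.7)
The plan is to combine the pointwise algebraic identity of Lemma \ref{l23} with the pointwise lower bounds of Lemma \ref{l24}, then absorb the cross term via Cauchy--Schwarz and integrate in time. Take the manifold in Lemma \ref{l23} to be the time-slice $(\Sigma_t,g_t)$ of the $F$ curvature flow, the coefficient tensor to be $a^{ij}=\mathbf{a}^{ij}$, and the differential operator to be $\mathbf{P}=\partial_t-\nabla_i(\mathbf{a}^{ij}\nabla_j\,\cdot\,)$ from Proposition \ref{p14}; choose the weight $G$ and auxiliary function $\Psi$ exactly as in (\ref{112}) and (\ref{113}). Since $\mathrm{u}_t$ is compactly supported in $\Sigma_t\setminus\bar{B}_R$ for each $t$, all the integrations by parts used in deriving (\ref{103}) are legitimate.

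Write $Q:=\partial_t\mathrm{u}+\mathbf{a}^{ij}\nabla_i\ln G\,\nabla_j\mathrm{u}+\tfrac12\Psi\mathrm{u}$. By Cauchy--Schwarz,
\[
2(\mathbf{P}\mathrm{u})\,Q\;\leq\;|\mathbf{P}\mathrm{u}|^2+Q^2,
\]
so the first two terms on the RHS of (\ref{103}) satisfy $\int 2(\mathbf{P}\mathrm{u})Q\,G-\int 2Q^2 G\leq\int|\mathbf{P}\mathrm{u}|^2 G-\int Q^2 G\leq\int|\mathbf{P}\mathrm{u}|^2 G$. Applying this to (\ref{103}) and integrating over $t\in[-\tau,0]$ yields
\[
\int_{-\tau}^{0}\!\!\int_{\Sigma_t}\!\!\Big[\big(2\Upsilon^{ij}-(\Phi-\Psi)\mathbf{a}^{ij}\big)\nabla_i\mathrm{u}\nabla_j\mathrm{u}+\tfrac12\big(\partial_t\Psi-\nabla_i(\mathbf{a}^{ij}\nabla_j\Psi)+(\Phi-\Psi)\Psi\big)\mathrm{u}^2\Big]G\,d\mathcal{H}^n dt
\]
\[
\leq\int_{-\tau}^{0}\!\!\int_{\Sigma_t}|\mathbf{P}\mathrm{u}|^2 G\,d\mathcal{H}^n dt\;+\;\Big[\int_{\Sigma_t}\!\!\big(\mathbf{a}^{ij}\nabla_i\mathrm{u}\nabla_j\mathrm{u}-\tfrac12\Psi\mathrm{u}^2\big)G\,d\mathcal{H}^n\Big]_{t=0}^{\,t=-\tau},
\]
where the $t$-boundary contribution came from the exact time derivative in (\ref{103}).

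Now apply Lemma \ref{l24}: the LHS is bounded below by $\frac{\lambda^2}{9}\!\int\!\!\int\!\big(|\nabla_{\Sigma_t}\mathrm{u}|^2+|X|^2\mathrm{u}^2\big)G$, and since $|X|\geq R\geq 1$ on the support of $\mathrm{u}$, this is in turn at least $\frac{\lambda^2}{9}\!\int\!\!\int\!\big(|\nabla_{\Sigma_t}\mathrm{u}|^2+\mathrm{u}^2\big)G$, which is the desired LHS of (\ref{131}). For the boundary piece, at $t=-\tau$ the gradient term is estimated by (\ref{57}) as $\mathbf{a}^{ij}\nabla_i\mathrm{u}\nabla_j\mathrm{u}\leq\frac{3}{\lambda}|\nabla_{\Sigma_{-\tau}}\mathrm{u}_{-\tau}|^2$ and the $-\tfrac12\Psi\mathrm{u}^2$ contribution is dropped since $\Psi\geq 0$; at $t=0$ the gradient term has a favorable sign (by positivity of $\mathbf{a}$) and is discarded, leaving only $+\tfrac12\int_{\mathcal{C}}\Psi(\cdot,0)\mathrm{u}^2(\cdot,0)\,G(\cdot,0)\,d\mathcal{H}^n$. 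Collecting terms gives (\ref{131}).

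The only nontrivial step is selecting the weight and auxiliary function so that both coercivity statements of Lemma \ref{l24} hold simultaneously, and this was already the content of that lemma; everything else here is bookkeeping. The mildly delicate point to watch is that after Cauchy--Schwarz one must be willing to discard the residual $-\int Q^2 G$, which is allowed because it has the right sign, and that the $-\tfrac12\Psi\mathrm{u}^2$ term at $t=-\tau$ can be thrown away but the corresponding term at $t=0$ must be kept (with the opposite sign) as the only uncontrollable boundary contribution on $\mathcal{C}$.
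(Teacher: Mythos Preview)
Your proof is correct and follows essentially the same approach as the paper: apply the integral identity of Lemma \ref{l23} on each time-slice, use the inequality $2(\mathbf{P}\mathrm{u})Q\leq|\mathbf{P}\mathrm{u}|^2+Q^2$ to absorb the cross term, invoke Lemma \ref{l24} (together with $|X|\geq R\geq 1$) for the lower bound on the left, integrate in $t$, and handle the resulting boundary terms using (\ref{57}) and $\Psi\geq 0$. The treatment of the signs in the boundary contributions at $t=-\tau$ and $t=0$ is exactly as in the paper.
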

\begin{proof}
Apply Lemma \ref{l23} to the hypersurface $\Sigma_{t}$ (with $\partial_{t}g=-2F\left(A^{\#}\right)A$),
the differential operator $\mathbf{P}$ and the function $u_{t}$
to get 

 \noindent \resizebox{1.009\linewidth}{!}{
  \begin{minipage}{\linewidth}
  \begin{align*}
\int_{\mathrm{\Sigma_{t}}}\left\{ \left(2\Upsilon^{ij}-\left(\Phi-\Psi\right)\mathbf{a}^{ij}\right)\nabla_{i}\mathscr{\mathrm{u}}\,\nabla_{j}\mathscr{\mathrm{u}}+\,\frac{1}{2}\left(\partial_{t}\Psi-\nabla_{i}\left(a^{ij}\nabla_{j}\Psi\right)+\left(\Phi-\Psi\right)\Psi\right)\mathscr{\mathrm{u}}^{2}\right\} G\, d\mathcal{H}^{n}
\end{align*}
  \end{minipage}
}

 \noindent \resizebox{1.009\linewidth}{!}{
  \begin{minipage}{\linewidth}
  \begin{align*}
=\{\int_{\mathrm{\Sigma_{t}}}2\,\mathbf{P}\mathscr{\mathrm{u}}\,\left(\partial_{t}\mathscr{\mathrm{u}}+\,\mathbf{a}^{ij}\nabla_{i}\ln G\,\nabla_{j}\mathrm{u}+\,\frac{1}{2}\Psi\mathscr{\mathrm{u}}\right)G\, d\mathcal{H}^{n}-\,\int_{\mathrm{\Sigma_{t}}}2\left(\partial_{t}\mathscr{\mathrm{u}}+\,\mathbf{a}^{ij}\nabla_{i}\ln G\,\nabla_{j}\mathscr{\mathrm{u}}+\,\frac{1}{2}\Psi\mathscr{\mathrm{u}}\right)^{2}G\, d\mathcal{H}^{n}
\end{align*}
  \end{minipage}
} \\

\begin{equation}
-\partial_{t}\left\{ \int_{\mathrm{\Sigma_{t}}}\left(\mathbf{a}^{ij}\nabla_{i}\mathscr{\mathrm{u}}\,\nabla_{j}\mathscr{\mathrm{u}}-\,\frac{1}{2}\Psi\mathscr{\mathrm{u}}^{2}\right)G\, d\mathcal{H}^{n}\right\}\} \label{132}
\end{equation}
By Cauchy-Schwarz inequality, the RHS of (\ref{132}) is bounded from
above by
\begin{equation}
\int_{\Sigma_{t}}|\mathbf{P}\mathscr{\mathrm{u}}|{}^{2}G\, d\mathcal{H}^{n}dt\,-\,\partial_{t}\left\{ \int_{\Sigma_{t}}\left(\mathbf{a}^{ij}\nabla_{i}\mathscr{\mathrm{u}}\,\nabla_{j}\mathscr{\mathrm{u}}-\,\frac{1}{2}\Psi\mathscr{\mathrm{u}}^{2}\right)G\, d\mathcal{H}^{n}\right\} \label{133}
\end{equation}
By Lemma  \ref{l24} and $R\geq1$, the LHS of (\ref{132}) is bounded from
below by
\begin{equation}
\frac{\lambda^{2}}{9}\int_{\Sigma_{t}}\left(|\nabla_{\Sigma_{t}}\mathscr{\mathrm{u}}|{}^{2}+\mathscr{\mathrm{u}}^{2}\right)G\, d\mathcal{H}^{n}\label{134}
\end{equation}
Combining (\ref{132}), (\ref{133}), (\ref{134}), we get
\begin{equation}
\frac{\lambda^{2}}{9}\int_{\Sigma_{t}}\left(|\nabla_{\Sigma_{t}}\mathscr{\mathrm{u}}|{}^{2}+\mathscr{\mathrm{u}}^{2}\right)G\, d\mathcal{H}^{n}\label{135}
\end{equation}
\[
\leq\int_{\Sigma_{t}}\mathbf{|P}\mathscr{\mathrm{u}}|{}^{2}G\, d\mathcal{H}^{n}dt\,-\,\partial_{t}\left\{ \int_{\Sigma_{t}}\left(\mathbf{a}^{ij}\nabla_{i}\mathscr{\mathrm{u}}\,\nabla_{j}\mathscr{\mathrm{u}}-\,\frac{1}{2}\Psi\mathscr{\mathrm{u}}^{2}\right)G\, d\mathcal{H}^{n}\right\} 
\]
Integrate (\ref{135}) in time from $-\tau$ to $0$ and then use
(\ref{57}) and $\Psi\geq0$ to conclude (\ref{131}).
\end{proof}

Now we are ready to show that $h$ vanishes outside a compact set.
We basically follows the proof in \cite{ESS} (which is also used
in \cite{W}).
\begin{thm}\label{t27}
Suppose that $\varkappa\leq6^{-4}\lambda^{3}$ in (\ref{4}) and (\ref{5}),
then there exits $\boldsymbol{R}=\boldsymbol{R}\left(\Sigma,\,\tilde{\Sigma},\mathcal{\, C},\, U,\,\parallel F\parallel_{C^{3}\left(U\right)},\,\lambda,\,\varkappa\right)\geq1$
so that the deviation $h\left(\cdot,\,-1\right)$ of $\tilde{\Sigma}$
from $\Sigma$ vanishes on $\Sigma\setminus\bar{B}_{\boldsymbol{R}}$.
In other words, $\tilde{\Sigma}=\Sigma$ outside the ball $B_{\boldsymbol{R}}$.\end{thm}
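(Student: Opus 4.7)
The plan is to adapt the ESS backward-uniqueness argument used by Wang, applying the Carleman inequality of Proposition \ref{p26} to a compactly supported cutoff $u = \chi h$ and showing that the right-hand side of the inequality stays uniformly bounded while its left-hand side must blow up if $h$ fails to vanish outside any fixed ball. By the self-similarity of the two rescaled flows, it is enough to establish the conclusion at some intermediate time $t = -\alpha$: coincidence of $\Sigma_{-\alpha}$ and $\tilde{\Sigma}_{-\alpha}$ outside an $R_1$-ball is, after rescaling by $1/\sqrt{\alpha}$, equivalent to $\Sigma = \tilde{\Sigma}$ outside the $R_1/\sqrt{\alpha}$-ball at $t=-1$. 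First I would fix $\alpha \in (0,1)$ small with $\Lambda\alpha < 2$ (where $\Lambda$ is the constant from Proposition \ref{p22}) so that the super-exponential decay $|h|+|\nabla h| \leq \Lambda \exp(|X|^2/(\Lambda t))$ dominates the Carleman initial-time weight $G|_{t=-\alpha} = \exp(|X|^2)$ as well as $G$ on the outer cutoff annulus.

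Arguing by contradiction, suppose there is a sequence $\hat X_j \in \Sigma_{-\alpha}$ with $\rho := |\hat X_j| \to \infty$ and $\delta := |h(\hat X_j, -\alpha)| > 0$. I would choose $R_0 \gg 1$ (depending on $\Sigma,\tilde\Sigma,\mathcal{C},U,\|F\|_{C^3(U)},\lambda,\varkappa$) large enough that Propositions \ref{p14}, \ref{p15}, \ref{p26} and Lemma \ref{l12} all apply on $\{|X| > R_0\}$ and that the PDE error from Proposition \ref{p14} can be absorbed into the Carleman left-hand side. For each such $\hat X = \hat X_j$, take a radial cutoff $\chi(X) = \chi_0(|X|)$ with $\chi_0 \equiv 0$ on $\{|X| < R_0\} \cup \{|X| > 2\rho\}$, $\chi_0 \equiv 1$ on $\{2R_0 < |X| < \rho\}$, and the standard derivative bounds $|\nabla^j \chi| \lesssim R_0^{-j}$ on the inner transition annulus $A_1 = \{R_0 < |X| < 2R_0\}$ and $|\nabla^j \chi| \lesssim \rho^{-j}$ on the outer annulus $A_2 = \{\rho < |X| < 2\rho\}$. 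Then apply Proposition \ref{p26} with $\tau = \alpha$, $M = 1$, and $u = \chi h$.

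To bound the right-hand side, write $\mathbf{P}(\chi h) = \chi \mathbf{P} h + [\mathbf{P},\chi]h$. The first piece satisfies $|\chi \mathbf{P} h|^2 \leq (C/R_0^2)\chi^2(|\nabla h|^2 + h^2)$ by Proposition \ref{p14}, so it is absorbed into the coercive $(\lambda^2/9)\int (|\nabla u|^2 + u^2) G$ term on the left for $R_0$ large. The commutator $[\mathbf{P},\chi]h$ is supported on $A_1 \cup A_2$. On $A_1$ it is bounded using the $|X|^{-k}$ estimates for $h$ and its derivatives from Lemma \ref{l12} and Proposition \ref{p15}, giving a contribution at most $C_1(R_0)$ independent of $\rho$. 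On $A_2$, Proposition \ref{p22} gives $|h|^2 + |\nabla h|^2 \leq C \exp(-2|X|^2/(\Lambda\alpha))$, and once paired with $G$ on $A_2$ this super-exponential decay wins (because $\Lambda\alpha < 2$), so the $A_2$ contribution tends to $0$ as $\rho \to \infty$. The initial-time term $(3/\lambda) \int |\nabla u(\cdot,-\alpha)|^2 \exp(|X|^2)\, d\mathcal{H}^n$ is likewise bounded by a constant $C_2$ independent of $\rho$ using the same super-exponential decay together with the polynomial volume growth (\ref{10}). The final-time term vanishes since $h(\cdot,0) \equiv 0$ by Proposition \ref{p14}.

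For the left-hand side, continuity of $h$ combined with the gradient estimates of Lemma \ref{l12} produces a space-time region $B_{\Sigma_t}(\hat X, \epsilon) \times [-\alpha, -\alpha+\eta]$ on which $|h| \geq \delta/2$, with $\epsilon$ and $\eta$ bounded below by explicit positive powers of $\rho$. Since $G \geq \exp(|\hat X|^2) = \exp(\rho^2)$ on this region, the left-hand side exceeds $c\,\delta^2 \epsilon^n \eta \exp(\rho^2)$, which grows at least like $\rho^{n-1}\exp(\rho^2)$ as $\rho \to \infty$ even after accounting for the $1/\rho$ decay of $\delta$ from Lemma \ref{l12}. This contradicts the right-hand side bound $C_1(R_0) + C_2 + o_\rho(1)$ for $\rho$ sufficiently large, so $h(\cdot,-\alpha)$ must vanish outside some ball $B_{\boldsymbol{R}_0}$, and the self-similarity rescaling gives the theorem at $t=-1$. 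The main obstacle is calibrating $\alpha$ small enough that Proposition \ref{p22} dominates both the outer commutator and the initial Carleman data against the exponentially growing weight $\exp(|X|^2)$ built into $G$, while ensuring the left-hand side retains enough weight at $(\hat X, -\alpha)$ to overcome the $R_0$-dependent inner-commutator constant.
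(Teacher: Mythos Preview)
Your overall strategy---cut off $h$, feed $u=\chi h$ into the Carleman inequality of Proposition~\ref{p26}, control the commutator on the inner and outer annuli, and kill the $t=0$ boundary term with (\ref{54})---matches the paper. But the argument breaks at the step where you fix $M=1$ and try to reach a contradiction via a sequence $\hat X_j$.

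The problem is that the negation of the theorem gives you only a sequence $\hat X_j$ with $\rho_j := |\hat X_j|\to\infty$ and $\delta_j := |h(\hat X_j,-\alpha)|>0$; it gives you \emph{no lower bound} on $\delta_j$. From Lemma~\ref{l12} you know $\delta_j \le C\rho_j^{-1}$, but $\delta_j$ could equally well decay like $\exp(-\rho_j^{3})$. Your lower bound on the Carleman left side is of order $\delta_j^{\,n+3}\rho_j^{\,2n+3}\exp(\rho_j^2)$ (since the space--time neighborhood on which $|h|\ge\delta_j/2$ has dimensions $\epsilon\sim\delta_j\rho_j^2$, $\eta\sim\delta_j\rho_j^3$), and this quantity need not diverge. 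Meanwhile, once you have sent $\rho\to\infty$ the inequality with $M=1$ only says $\int h^2\,G<\infty$, which you already knew from Proposition~\ref{p22} (since $\Lambda\alpha<2$ makes $h^2 e^{|X|^2}$ integrable). No contradiction results.

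The missing idea is precisely the role of the parameter $M$ in $G=\exp\!\big(M(t+\tau)|X|^{3/2}+|X|^2\big)$. The paper keeps $M$ free, first lets the outer radius $\mathcal R\to\infty$ (the outer-annulus term dies because $M\tau|X|^{3/2}-|X|^2\to-\infty$), and is left with
\[
\exp\!\big(4M\tau R^{3/2}\big)\int_{-\tau/2}^{0}\!\int_{\Sigma_t\setminus\bar B_{4R}} h^2
\;\le\; C\exp\!\big(2\sqrt{2}\,M\tau R^{3/2}\big)+C,
\]
where the left exponent comes from $(t+\tau)\ge\tau/2$ and $|X|^{3/2}\ge(4R)^{3/2}=8R^{3/2}$ on the region of interest, and the right exponent from $|X|^{3/2}\le(R+1)^{3/2}\le(2R)^{3/2}$ on the inner annulus. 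Because $4>2\sqrt{2}$, sending $M\to\infty$ forces the integral to vanish. The whole point of the $M|X|^{3/2}$ term in $G$ is to create this separation between the inner cutoff error and a slightly larger ball; with $M$ fixed that leverage is lost. (A secondary issue: with your outer annulus $\{\rho<|X|<2\rho\}$ the condition needed for the $A_2$ term to vanish is $\Lambda\alpha<1/2$, not $\Lambda\alpha<2$, since $G$ carries $e^{|X|^2}$ up to $|X|=2\rho$.)
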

\begin{proof}
Choose $R\gg1$ (depending on $\Sigma,\,\tilde{\Sigma},\mathcal{\, C},\, U,\,\parallel F\parallel_{C^{3}\left(U\right)},\,\lambda$)
so that Proposition \ref{p14}, Proposition \ref{p15}, Proposition \ref{p22}, Proposition
\ref{p26} and (\ref{20}) hold; in particular, we may assume that for all
$X\in\Sigma_{t}\setminus\bar{B}_{R}$, $t\in\left[-\tau,\,0\right]$
\begin{equation}
|\mathbf{P}h|\,\leq\frac{\lambda}{6}\left(|\nabla_{\Sigma_{t}}h|\,+\,|h|\right)\label{136}
\end{equation}
\begin{equation}
|\nabla_{\Sigma_{t}}h|\,+\,|h|\,\leq\Lambda\,\exp\left(\frac{|X|^{2}}{\Lambda t}\right)\label{137}
\end{equation}
where $\Lambda=\Lambda\left(n,\,\mathcal{C},\,\parallel F\parallel_{C^{3}\left(U\right)},\,\lambda\right)>0$,
$\tau\equiv\textrm{min}\left\{ \alpha\left(n,\,\mathcal{C},\,\parallel F\parallel_{C^{3}\left(U\right)},\,\lambda\right),\,\frac{1}{\Lambda}\right\} $
(see Proposition \ref{p22}). 

For any given $M\geq1$ and $\mathcal{R}\geq4R+1$, choose a smooth
cut-off function $\zeta=\zeta\left(X\right)$ so that 
\begin{equation}
\chi_{B_{\mathcal{R}-1}\setminus\bar{B}_{R+1}}\,\leq\zeta\leq\,\chi_{B_{\mathcal{R}}\setminus\bar{B}_{R}}\label{138}
\end{equation}
\[
|D\zeta|\,+\,|D^{2}\zeta|\,\leq3
\]
Note that $D\zeta$ is supported in $E=\left\{ X\in\mathbb{R}^{n+1}\Big|\, R\leq\,|X|\,\leq R+1\;\textrm{or}\;\mathcal{R}-1\leq\,|X|\,\leq\mathcal{R}\right\} $. 

Let $\mathscr{\mathrm{u}}\left(\cdot,\, t\right)=\zeta\, h\left(\cdot,\, t\right)$,
then $\mathscr{\mathrm{u}}\left(\cdot,\, t\right)$ is compactly supported
in $\Sigma_{t}\setminus\bar{B}_{R}$ for each $t\in[-\tau,\,0]$,
and we have, by (\ref{136}), (\ref{137}), (\ref{138}) 
\begin{equation}
\Big|\mathbf{P}\mathscr{\mathrm{u}}\Big|\,=\,\Big|\zeta\,\mathbf{P}h-h\,\mathbf{P}\zeta-2\mathbf{a}^{ij}\nabla_{i}\zeta\,\nabla_{j}h\Big|\label{139}
\end{equation}
\[
\leq\frac{\lambda}{6}\left(|\nabla_{\Sigma_{t}}\mathscr{\mathrm{u}}|\,+\,|\mathscr{\mathrm{u}}|\right)+\, C\left(n,\,\mathcal{C},\,\parallel F\parallel_{C^{3}\left(U\right)}\right)\left(|\nabla_{\Sigma_{t}}h|\,+\,|h|\right)\,\chi_{E}
\]
\[
\leq\frac{\lambda}{6}\left(|\nabla_{\Sigma_{t}}\mathscr{\mathrm{u}}|\,+\,|\mathscr{\mathrm{u}}|\right)+\, C\left(n,\,\mathcal{C},\,\parallel F\parallel_{C^{3}\left(U\right)},\,\lambda\right)\,\exp\left(\frac{|X|{}^{2}}{\Lambda t}\right)\,\chi_{E}
\]
\begin{equation}
\mathscr{\mathrm{u}}\left(\cdot,\,0\right)=0\label{140}
\end{equation}
By (\ref{139}), (\ref{140}), Proposition \ref{p26} and (\ref{137}), we get 
\[
\frac{\lambda^{2}}{9}\int_{-\tau}^{0}\int_{\Sigma_{t}}\left(|\nabla_{\Sigma_{t}}\mathscr{\mathrm{u}}|^{2}+\mathscr{\mathrm{u}}^{2}\right)G\, d\mathcal{H}^{n}dt\,\leq\,\{\frac{\lambda^{2}}{18}\int_{-\tau}^{0}\int_{\Sigma_{t}}\left(|\nabla_{\Sigma_{t}}\mathscr{\mathrm{u}}|^{2}+\mathscr{\mathrm{u}}^{2}\right)G\, d\mathcal{H}^{n}dt
\]
\begin{equation}
+\, C\left(n,\,\mathcal{C},\,\parallel F\parallel_{C^{3}\left(U\right)},\,\lambda\right)\,\int_{-\tau}^{0}\int_{\Sigma_{t}\cap E}\exp\left(2\frac{|X|{}^{2}}{\Lambda t}\right)\, G\, d\mathcal{H}^{n}dt\label{141}
\end{equation}
\[
+C\left(n,\,\mathcal{C},\,\parallel F\parallel_{C^{3}\left(U\right)},\,\lambda\right)\,\int_{\Sigma_{-\tau}}\exp\left(-2\frac{|X|{}^{2}}{\Lambda\tau}\right)\, G\left(\cdot,\,-\tau\right)\, d\mathcal{H}^{n}\}
\]
where $G$ is defined in (\ref{112}). Note that by the choice $\tau\leq\frac{1}{\Lambda}$,
we can estimate the last two terms on the RHS of (\ref{141}) by
\begin{equation}
\int_{-\tau}^{0}\int_{\Sigma_{t}\cap E}\exp\left(2\frac{|X|{}^{2}}{\Lambda t}\right)\, G\, d\mathcal{H}^{n}dt\,\leq\,\int_{-\tau}^{0}\int_{\Sigma_{t}\cap E}\exp\left(M\tau|X|{}^{\frac{3}{2}}-\,|X|{}^{2}\right)\, d\mathcal{H}^{n}dt\label{142}
\end{equation}
and
\begin{equation}
\int_{\Sigma_{-\tau}}\exp\left(-2\frac{|X|{}^{2}}{\Lambda\tau}\right)\, G\left(\cdot,\,-\tau\right)\, d\mathcal{H}^{n}\,\leq\,\int_{\Sigma_{-\tau}}\exp\left(-|X|{}^{2}\right)\, d\mathcal{H}^{n}\label{143}
\end{equation}
Consequently, by (\ref{142}), (\ref{143}) and noting that the first
term on the RHS of (\ref{141}) can be absorbed by its LHS, we get
from (\ref{141}) that 
\begin{equation}
\frac{\lambda^{2}}{18}\int_{-\tau}^{0}\int_{\Sigma_{t}}\left(|\nabla_{\Sigma_{t}}\mathscr{\mathrm{u}}|{}^{2}+\mathscr{\mathrm{u}}^{2}\right)G\, d\mathcal{H}^{n}dt\,\label{144}
\end{equation}
\[
\leq \, \{C\left(n,\,\mathcal{C},\,\parallel F\parallel_{C^{3}\left(U\right)},\,\lambda\right)\int_{-\tau}^{0}\int_{\Sigma_{t}\cap E}\exp\left(M\tau|X|{}^{\frac{3}{2}}-|X|{}^{2}\right)\, d\mathcal{H}^{n}dt
\]
\[
+C\left(n,\,\mathcal{C},\,\parallel F\parallel_{C^{3}\left(U\right)},\,\lambda\right)\int_{\Sigma_{-\tau}}\exp\left(-|X|^{2}\right)\, d\mathcal{H}^{n}\}
\]
 
\[
\leq\, \{C\left(n,\,\mathcal{C},\,\parallel F\parallel_{C^{3}\left(U\right)},\,\lambda\right)\int_{-\tau}^{0}\int_{\Sigma_{t}\cap\left(B_{\mathcal{R}-1}\setminus\bar{B}_{\mathcal{R}}\right)}\exp\left(M\tau\mathcal{R}{}^{\frac{3}{2}}-\left(\mathcal{R}-1\right){}^{2}\right)\, d\mathcal{H}^{n}dt
\]
\[
+C\left(n,\,\mathcal{C},\,\parallel F\parallel_{C^{3}\left(U\right)},\,\lambda\right)\int_{-\tau}^{0}\int_{\Sigma_{t}\cap\left(B_{R}\setminus\bar{B}_{R+1}\right)}\exp\left(M\tau\left(R+1\right){}^{\frac{3}{2}}-R{}^{2}\right)\, d\mathcal{H}^{n}dt
\]
\[
+C\left(n,\,\mathcal{C},\,\parallel F\parallel_{C^{3}\left(U\right)},\,\lambda\right)\int_{\Sigma_{-\tau}}\exp\left(-|X|{}^{2}\right)\, d\mathcal{H}^{n}\}
\]

The first term on the RHS of (\ref{144}) goes away as $\mathcal{\mathcal{R}}\nearrow\infty$;
the last term is bounded from above by $C\left(n,\,\mathcal{C},\,\parallel F\parallel_{C^{3}\left(U\right)},\,\lambda\right)$
because of (\ref{10}). For the LHS of (\ref{144}), we have
\[
\frac{\lambda^{2}}{18}\int_{-\tau}^{0}\int_{\Sigma_{t}}\left(|\nabla_{\Sigma_{t}}\mathscr{\mathrm{u}}|^{2}+\mathscr{\mathrm{u}}^{2}\right)G\, d\mathcal{H}^{n}dt\,\geq\,\frac{\lambda^{2}}{18}\int_{-\frac{\tau}{2}}^{0}\int_{\Sigma_{t}\cap\left(B_{\mathcal{R}-1}\setminus\bar{B}_{4R}\right)}\mathscr{\mathrm{u}}^{2}G\, d\mathcal{H}^{n}dt
\]
\[
\geq\frac{\lambda^{2}}{18}\,\exp\left(4M\tau R{}^{\frac{3}{2}}\right)\,\int_{-\frac{\tau}{2}}^{0}\int_{\Sigma_{t}\cap\left(B_{\mathcal{R}-1}\setminus\bar{B}_{4R}\right)}h^{2}\, d\mathcal{H}^{n}dt
\]
Therefore, let $\mathcal{\mathcal{R}}\nearrow\infty$ in (\ref{144}),
we arrive at
\begin{equation}
\int_{-\frac{\tau}{2}}^{0}\int_{\Sigma_{t}\setminus\bar{B}_{4R}}h^{2}\, d\mathcal{H}^{n}dt\label{145}
\end{equation}
\[
\leq\exp\left(-4M\tau R{}^{\frac{3}{2}}\right)\, C\left(n,\,\mathcal{C},\,\parallel F\parallel_{C^{3}\left(U\right)},\,\lambda\right)\left\{ \exp\left(2\sqrt{2}M\tau R{}^{\frac{3}{2}}\right)\,+1\right\} 
\]
Let $M\nearrow\infty$ in (\ref{145}), we get $h_{t}=h\left(\cdot,\, t\right)$
vanishes on $\Sigma_{t}\setminus\bar{B}_{4R}$ for $t\in\left[-\frac{\tau}{2},\,0\right]$,
and hence $\tilde{\Sigma}_{-\frac{\tau}{2}}=\sqrt{\frac{\tau}{2}}\,\tilde{\Sigma}$
coincides with $\Sigma_{-\frac{\tau}{2}}=\sqrt{\frac{\tau}{2}}\,\Sigma$
outside $B_{4R}$, which in turn shows that $\tilde{\Sigma}$ coincide
with $\Sigma$ outside the ball of radius $\boldsymbol{R}=\frac{4R}{\sqrt{\nicefrac{\tau}{2}}}$.
\end{proof}
By the previous theorem and the ``unique continuation principle''
in Proposition \ref{p20} (see Remark \ref{r21}), we have the following characterization of
the overlap region of $\Sigma$ and $\tilde{\Sigma}$. 
\begin{thm}\label{t28}
Under the same hypothesis of Theorem \ref{t27}, let 
\[
\Sigma^{0}=\left\{ X\in\Sigma\cap\tilde{\Sigma}\Big|\,\Sigma\textrm{ coincides with }\tilde{\Sigma}\textrm{ in a neighborhood of }X\right\} 
\]
then \textup{$\Sigma^{0}$ is a nonempty hypersurface and} $\partial\Sigma^{0}\subseteq\left(\partial\Sigma\,\cup\,\partial\tilde{\Sigma}\right)$. \end{thm}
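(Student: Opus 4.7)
The plan is to reduce the theorem to a unique continuation argument for an elliptic equation satisfied by the local deviation function between $\Sigma$ and $\tilde{\Sigma}$. First, the nonemptiness of $\Sigma^{0}$ is immediate from Theorem \ref{t27}: outside $B_{\boldsymbol{R}}$ we have $\Sigma\setminus\bar B_{\boldsymbol{R}}=\tilde{\Sigma}\setminus\bar B_{\boldsymbol{R}}$, so every point of $\Sigma\setminus\bar B_{\boldsymbol{R}}$ belongs to $\Sigma^{0}$; since $\Sigma$ is properly embedded and $\mathcal{C}$ is the tangent cone at infinity, this is a nonempty open subset of $\Sigma$, hence a hypersurface. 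The set $\Sigma^{0}$ is open in $\Sigma$ by definition, so it is automatically a smooth hypersurface.

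The substance of the theorem is the inclusion $\partial\Sigma^{0}\subseteq\partial\Sigma\cup\partial\tilde{\Sigma}$. I would argue by contradiction: suppose $X_{0}\in\partial\Sigma^{0}$ is an interior point of both $\Sigma$ and $\tilde{\Sigma}$. Pick a sequence $X_{k}\in\Sigma^{0}$ with $X_{k}\to X_{0}$; at each $X_{k}$ the hypersurfaces $\Sigma$ and $\tilde{\Sigma}$ coincide in a neighborhood, so in particular $T_{X_{k}}\Sigma=T_{X_{k}}\tilde{\Sigma}$ and the unit normals agree after a consistent orientation choice. Passing to the limit, $X_{0}\in\Sigma\cap\tilde{\Sigma}$ with the same tangent hyperplane and unit normal. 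Consequently, on a small ball around $X_{0}$, $\tilde{\Sigma}$ is representable as a normal graph $\tilde{X}=X+h(X)N$ over $\Sigma$, with $h$ a smooth function defined on a neighborhood of $X_{0}$ in $\Sigma$.

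Next, repeating the computation of Proposition \ref{p11} verbatim in this local coordinate patch (the computation there used no global information about $|X|$, only that both hypersurfaces satisfy the $F$ self-shrinker equation and that $\tilde{\Sigma}$ is a normal graph of $h$ over $\Sigma$), the deviation $h$ satisfies a linear, uniformly elliptic, second-order equation of the form
\[
\nabla_{\Sigma}\cdot(\mathbf{a}\,dh)\,-\,\tfrac{1}{2}(X\cdot\nabla_{\Sigma}h-h)\,=\,b(X)\cdot\nabla_{\Sigma}h\,+\,c(X)\,h
\]
on a neighborhood of $X_{0}$ in $\Sigma$, where $\mathbf{a}$ is the symmetric 2-tensor of Definition \ref{d9} (uniformly positive definite near $X_{0}$ since the curvatures of $\Sigma$ and $\tilde{\Sigma}$ agree there, so $\partial F/\partial S_{i}^{j}$ is positive along the segment between $A^{\#}$ and $\tilde{A}^{\#}$) and $b,c$ are bounded smooth functions. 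Moreover, because every point of $\Sigma^{0}$ is a neighborhood on which $h\equiv 0$, and such points accumulate at $X_{0}$, the function $h$ vanishes identically on an open set clustering at $X_{0}$; in particular $h$ vanishes to infinite order at $X_{0}$ in the sense of (\ref{69}).

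The final step is to apply the elliptic unique continuation principle noted in Remark \ref{r21} (viewing $h$ as time-independent in Proposition \ref{p20}, or equivalently invoking the standard Aronszajn--Cordes theorem for uniformly elliptic operators with Lipschitz principal coefficients; the hypotheses are satisfied by $\mathbf{a},b,c$ above). This forces $h\equiv 0$ on a full neighborhood of $X_{0}$ in $\Sigma$, which means $X_{0}\in\Sigma^{0}$, contradicting $X_{0}\in\partial\Sigma^{0}$. The main obstacle in this plan is the geometric setup at $X_{0}$: one must verify that $\Sigma$ and $\tilde{\Sigma}$ are not merely crossing transversely at $X_{0}$ but share the same tangent plane and orientation, so that the normal graph representation is legitimate; this is precisely where the existence of an accumulating sequence in $\Sigma^{0}$ (not just in $\Sigma\cap\tilde{\Sigma}$) is essential.
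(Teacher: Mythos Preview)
Your proposal is correct and follows essentially the same route as the paper: nonemptiness from Theorem~\ref{t27}, then a contradiction argument at a hypothetical $X_{0}\in\partial\Sigma^{0}\setminus(\partial\Sigma\cup\partial\tilde\Sigma)$ using the accumulating sequence in $\Sigma^{0}$ to match tangent planes, deriving an elliptic equation for the local deviation, and invoking the unique continuation principle of Proposition~\ref{p20}/Remark~\ref{r21}. The only cosmetic difference is that the paper writes both surfaces as graphs $\mathfrak{u},\tilde{\mathfrak{u}}$ over the common tangent hyperplane and works with $\mathtt{v}=\tilde{\mathfrak{u}}-\mathfrak{u}$ (deriving equation~(\ref{148}) directly from the two self-shrinker equations), whereas you use the normal-graph deviation $h$ and adapt Proposition~\ref{p11}; both yield a uniformly elliptic equation with bounded lower-order coefficients to which the same unique continuation applies.
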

\begin{proof}
Note that $\Sigma^{0}$ is a nonempty hypersurface follows from Theorem \ref{t27}.

Suppose that $\partial\Sigma^{0}\nsubseteq\left(\partial\Sigma\,\cup\,\partial\tilde{\Sigma}\right)$,
then pick $\hat{X}\in\partial\Sigma^{0}\setminus\left(\partial\Sigma\,\cup\,\partial\tilde{\Sigma}\right)$
and choose a sequence $\left\{ \hat{X}_{m}\in\Sigma^{0}\right\} $
converging to $\hat{X}$. Note that $N\left(\hat{X}\right)=\tilde{N}\left(\hat{X}\right)$
since $N\left(\hat{X}_{m}\right)=\tilde{N}\left(\hat{X}_{m}\right)$
for all $m\in\mathbb{N}$, where $N$ , $\tilde{N}$ are the unit-normal
of $\Sigma$ and $\tilde{\Sigma}$, respectively. Thus, near $\hat{X}$,
$\Sigma$ and $\tilde{\Sigma}$ can be regraded as graphes of $\mathfrak{u}$
and $\tilde{\mathfrak{u}}$, respectively, over $B_{\boldsymbol{\varrho}}^{n}\subset T_{\hat{X}}\Sigma=T_{\hat{X}}\tilde{\Sigma}$
for some $\boldsymbol{\varrho}\in\left(0,\,1\right)$. That is, $\Sigma$
and $\tilde{\Sigma}$ can be respectively parametrized by 
\[
X=X\left(x\right)\equiv\hat{X}+\left(x,\,\mathfrak{u}\left(x\right)\right),\quad\tilde{X}=\tilde{X}\left(x\right)\equiv\hat{X}+\left(x,\,\tilde{\mathfrak{u}}\left(x\right)\right)\quad\textrm{for}\; x\in B_{\boldsymbol{\varrho}}^{n}
\]
in which we assume that $N\left(\hat{X}\right)=\tilde{N}\left(\hat{X}\right)=\left(0,\,1\right)$
for ease of notation. Note also that $A_{i}^{j}\left(0\right)=\tilde{A_{i}}^{j}\left(0\right)$
since $A_{i}^{j}\left(x_{m}\right)=\tilde{A_{i}}^{j}\left(x_{m}\right)$
for all $m\in\mathbb{N}$, where $x_{m}$ is the coordinates of $\hat{X}_{m}$
(i.e. $X\left(x_{m}\right)=\hat{X}_{m}=\tilde{X}\left(x_{m}\right)$) and 
\begin{equation}
A^{\#}\left(x\right)\sim A_{i}^{j}\left(x\right)=\partial_{i}\left(\frac{\partial_{j}\mathfrak{u}\left(x\right)}{\sqrt{1+|\partial_{x}\mathfrak{u}|{}^{2}}}\right),\quad\tilde{A}^{\#}\left(x\right)\sim\tilde{A}_{i}^{j}\left(x\right)=\partial_{i}\left(\frac{\partial_{j}\mathfrak{\tilde{u}}\left(x\right)}{\sqrt{1+|\partial_{x}\mathfrak{\tilde{u}}|{}^{2}}}\right)\label{146}
\end{equation}
are the shape operators of $\Sigma$ and $\tilde{\Sigma}$, respectively.
As a result, we may assume (by choosing $\boldsymbol{\varrho}$ small
if necessary) that $\tilde{A}_{i}^{j}\left(x\right)$ is so close to $A_{i}^{j}\left(x\right)$ that the set 
\[
\mathfrak{U}=\left\{ \left(1-\theta\right)A_{i}^{j}\left(x\right)+\theta\tilde{A}_{i}^{j}\left(x\right)\Big|\, x\in B_{\boldsymbol{\varrho}}^{n},\,\theta\in\left[0,\,1\right]\right\} 
\]
is a bounded subset of $\boldsymbol{\Omega}$ and there holds 
\[
\bar{\lambda}\,\leq\frac{\partial F}{\partial S_{i}^{j}}\left(\left(1-\theta\right)A^{\#}\left(x\right)+\theta\tilde{A}^{\#}\left(x\right)\right)\leq\,\frac{1}{\bar{\lambda}}
\]
for some $\bar{\lambda}\in(0,\,1]$. 

From the $F$ shrinker equation in Definition \ref{d4}, we get

\noindent \resizebox{1.0\linewidth}{!}{
  \begin{minipage}{\linewidth}
  \begin{align}
\sqrt{1+|\partial_{x}\mathfrak{u}|^{2}}\, F\left(A_{i}^{j}\left(x\right)\right)+\,\frac{1}{2}\left(\mathfrak{u}-x\cdot\partial_{x}\mathfrak{u}\right)=0,\quad\sqrt{1+|\partial_{x}\tilde{\mathfrak{u}}|^{2}}\, F\left(\tilde{A}_{i}^{j}\left(x\right)\right)+\,\frac{1}{2}\left(\tilde{\mathfrak{u}}-\, x\cdot\partial_{x}\tilde{\mathfrak{u}}\right)=0\label{147}
\end{align}
  \end{minipage}
}\\

Subtracting (\ref{147}) and using (\ref{146}), we then get an equation for $\mathtt{v=\tilde{\mathfrak{u}}-\mathfrak{u}}$:
\begin{equation}
\mathfrak{a}^{ij}\partial_{ij}^{2}\mathtt{v}+\,\mathfrak{b}^{j}\partial_{j}\mathtt{v}+\,\frac{1}{2}\mathtt{v}=0\label{148}
\end{equation}
where

\noindent \resizebox{1.0\linewidth}{!}{
  \begin{minipage}{\linewidth}
  \begin{align}
\mathfrak{a}^{ij}\left(x\right)=\int_{0}^{1}\left\{ \frac{\partial F}{\partial S_{i}^{j}}\left(\left(1-\theta\right)A^{\#}\left(x\right)+\theta\tilde{A}^{\#}\left(x\right)\right)-\,\frac{\partial F}{\partial S_{i}^{k}}\left(\left(1-\theta\right)A^{\#}\left(x\right)+\theta\tilde{A}^{\#}\left(x\right)\right)\,\frac{\partial_{k}\mathfrak{u}_{\theta}\,\partial_{j}\mathfrak{u}_{\theta}}{1+|\partial_{x}\mathfrak{u}_{\theta}|{}^{2}}\right\} \, d\theta\label{149}
\end{align}
  \end{minipage}
}

\begin{equation}
\mathfrak{b}^{j}\left(x\right)=\{-\int_{0}^{1}\frac{\partial F}{\partial S_{i}^{j}}\left(\left(1-\theta\right)A^{\#}\left(x\right)+\theta\tilde{A}^{\#}\left(x\right)\right)\,\frac{\partial_{k}\mathfrak{u}_{\theta}\,\partial_{ik}^{2}\mathfrak{u}_{\theta}}{1+|\partial_{x}\mathfrak{u}_{\theta}|^{2}}\, d\theta\label{150}
\end{equation}
\[
-\int_{0}^{1}\frac{\partial F}{\partial S_{i}^{k}}\left(\left(1-\theta\right)A^{\#}\left(x\right)+\theta\tilde{A}^{\#}\left(x\right)\right)\,\frac{\partial_{j}\mathfrak{u}_{\theta}\,\partial_{ik}^{2}\mathfrak{u}_{\theta}+\partial_{k}\mathfrak{u}_{\theta}\,\partial_{ij}^{2}\mathfrak{u}_{\theta}}{1+|\partial_{x}\mathfrak{u}_{\theta}|{}^{2}}\, d\theta
\]
\[
+3\int_{0}^{1}\frac{\partial F}{\partial S_{i}^{k}}\left(\left(1-\theta\right)A^{\#}\left(x\right)+\theta\tilde{A}^{\#}\left(x\right)\right)\,\frac{\partial_{j}\mathfrak{u}_{\theta}\,\partial_{k}\mathfrak{u}_{\theta}\,\partial_{l}\mathfrak{u}_{\theta}\,\partial_{il}^{2}\mathfrak{u}_{\theta}}{\left(1+|\partial_{x}\mathfrak{u}_{\theta}|^{2}\right)^{\frac{3}{2}}}\, d\theta
\]
\[
+\int_{0}^{1}F\left(\left(1-\theta\right)A^{\#}\left(x\right)+\theta\tilde{A}^{\#}\left(x\right)\right)\,\frac{\partial_{j}\mathfrak{u}_{\theta}}{\sqrt{1+|\partial_{x}\mathfrak{u}_{\theta}|{}^{2}}}\, d\theta-\,\frac{1}{2}x_{j}\}
\]
and 
$$\mathfrak{u}_{\theta}=\left(1-\theta\right)\mathfrak{u}+\theta\tilde{\mathfrak{u}}$$
Note that (\ref{148}) is equivalent to the following divergence form
equation:
\begin{equation}
-\partial_{i}\left(\frac{\mathfrak{a}^{ij}+\mathfrak{a}^{ji}}{2}\partial_{j}\mathtt{v}\right)\,=\left(-\partial_{i}\left(\frac{\mathfrak{a}^{ij}+\mathfrak{a}^{ji}}{2}\right)+\mathfrak{b}^{j}\right)\partial_{j}\mathtt{v}+\,\frac{1}{2}\mathtt{v}\label{151}
\end{equation}
And by (\ref{149}), (\ref{150}) and (\ref{146}), we have the following
estimates for the coefficients of (\ref{151}): 
\begin{equation}
\frac{\bar{\lambda}}{1+\parallel\partial_{x}\mathfrak{u}_{\theta}\parallel_{L^{\infty}\left(B_{\boldsymbol{\varrho}}^{n}\right)}^{2}}\,\leq\,\frac{\mathfrak{a}^{ij}+\mathfrak{a}^{ji}}{2}\,\leq\, C\left(\parallel F\parallel_{C^{1}\left(\mathfrak{U}\right)},\;\parallel\mathfrak{u}\parallel_{C^{2}\left(B_{\boldsymbol{\varrho}}^{n}\right)}\right)\label{152}
\end{equation}
\begin{equation}
|\partial_{x}\mathfrak{a}^{ij}|\,+\,|\mathfrak{b}^{j}|\,\leq\, C\left(\parallel F\parallel_{C^{2}\left(\mathfrak{U}\right)},\;\parallel\mathfrak{u}\parallel_{C^{3}\left(B_{\boldsymbol{\varrho}}^{n}\right)}\right)\label{153}
\end{equation}

On the other hand, since $\hat{X}_{m}\in\Sigma^{0}$ and $\hat{X}_{m}\rightarrow\hat{X}$
as $m\nearrow\infty$, $\mathtt{v}$ is vanishing at each neighborhood
of $x_{m}$ and $x_{m}\rightarrow0$ as $m\nearrow\infty$. Thus,
by Proposition \ref{p20} and Remark \ref{r21}, $\mathtt{v}$ vanishes on $B^{n}\left(x_{m},\,\frac{1}{4}\left(\boldsymbol{\varrho}-|x_{m}|\right)\right)$
for all $m\in\mathbb{N}$, which implies that $\mathtt{v}$ vanishes
on $B^{n}\left(0,\,\frac{1}{4}\boldsymbol{\varrho}\right)$. In other
words, $\Sigma$ coincides with $\tilde{\Sigma}$ in a neighborhood
of $\hat{X}$, which contradicts with $\hat{X}\in\partial\Sigma^{0}$. 
\end{proof}

Lastly, we give an estimate of $\varkappa$ (defined in (\ref{5}))
in the rotationally symmetric case to conclude this section. From now on, we assume that the cone $\mathcal{C}$ (in
Definition \ref{d1}) is rotationally symmetric, say 
\[
\mathcal{C}=\left\{ \left(\sigma s\,\nu,\, s\right)\Big|\,\nu\in\mathcal{\mathbf{S}}^{n-1},\, s\in\mathbb{R}_{+}\right\} 
\]
for some constant $\sigma>0$, where $\mathcal{\mathbf{S}}^{n-1}$
is the unit-sphere in $\mathbb{R}^{n}$. To derive the estimate, we have to first
compute the covariant derivatives of the second fundamental form of
$\mathcal{C}$. 
\begin{lem}\label{l34}
At each point $X_{\mathcal{C}}=\left(\sigma s\,\nu,\, s\right)$\textup{$\in\mathcal{C}$
(where $\nu\in\mathcal{\mathbf{S}}^{n-1},\, s>0$), }pick an orthonormal
basis $\left\{ e_{1}^{\mathcal{C}},\cdots,\, e_{n}^{\mathcal{C}}\right\} $
for $T_{X_{\mathcal{C}}}\mathcal{C}$ so that $e_{n}^{\mathcal{C}}=\frac{\left(\sigma\nu,\,1\right)}{\sqrt{1+\sigma^{2}}}$,
then we have
\begin{equation}
A_{\mathcal{C}}\left(e_{i}^{\mathcal{C}},\, e_{j}^{\mathcal{C}}\right)=\kappa_{i}^{\mathcal{C}}\delta_{ij},\quad\textrm{with}\;\kappa_{1}^{\mathcal{C}}=\cdots=\kappa_{n-1}^{\mathcal{C}}=\frac{1}{\sigma|X_{\mathcal{C}}|},\;\kappa_{n}^{\mathcal{C}}=0\label{237}
\end{equation}
\begin{equation}
\nabla_{\mathcal{C}}A_{\mathcal{C}}\left(e_{i}^{\mathcal{C}},\, e_{j}^{\mathcal{C}},\, e_{n}^{\mathcal{C}}\right)=\frac{-1}{\sigma|X_{\mathcal{C}}|^{2}}\delta_{ij}=-\frac{\kappa_{i}^{\mathcal{C}}}{|X_{\mathcal{C}}|}\delta_{ij},\quad\forall\, i,\, j\neq n\label{238}
\end{equation}
\begin{equation}
\nabla_{\mathcal{C}}A_{\mathcal{C}}\left(e_{i}^{\mathcal{C}},\, e_{j}^{\mathcal{C}},\, e_{k}^{\mathcal{C}}\right)=\nabla_{\mathcal{C}}A_{\mathcal{C}}\left(e_{i}^{\mathcal{C}},\, e_{n}^{\mathcal{C}},\, e_{n}^{\mathcal{C}}\right)=\nabla_{\mathcal{C}}A_{\mathcal{C}}\left(e_{n}^{\mathcal{C}},\, e_{n}^{\mathcal{C}},\, e_{n}^{\mathcal{C}}\right)=0\quad\forall\, i,\, j,\, k\neq n\label{239}
\end{equation}
where $A_{\mathcal{C}}$ is the second fundamental form of $\mathcal{C}$
and $\nabla_{\mathcal{C}}A_{\mathcal{C}}$ is its covariant derivative.
Note that $A_{\mathcal{C}}$ and $\nabla_{\mathcal{C}}A_{\mathcal{C}}$
are totally symmetric tensors (by Codazzi equation).\end{lem}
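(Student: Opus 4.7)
\medskip

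The plan is to parametrize $\mathcal{C}$ by $(r,\omega)$ with $r=|X_{\mathcal{C}}|$ and $\omega$ a point on a cross-section sphere, work out the shape operator in closed form, and then use the position function $r$ to write $A_{\mathcal{C}}$ as a simple algebraic expression in the intrinsic data; differentiating that expression will directly yield $\nabla_{\mathcal{C}}A_{\mathcal{C}}$. The rotational and scaling symmetries of $\mathcal{C}$ mean it suffices to verify all identities at a single point $X_{\mathcal{C}}=(\sigma\nu_{0},1)$, and by continuity and equivariance the general formulas follow; this eliminates essentially all book-keeping with specific charts on $\mathbf{S}^{n-1}$.

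First I would verify (\ref{237}) by a direct ambient calculation. With unit normal $N_{\mathcal{C}}=(-\nu,\sigma)/\sqrt{1+\sigma^{2}}$ and frame $e_{n}^{\mathcal{C}}=(\sigma\nu,1)/\sqrt{1+\sigma^{2}}$, $e_{i}^{\mathcal{C}}=(\tau_{i},0)$ with $\{\tau_{i}\}_{i<n}$ orthonormal and tangent to $\mathbf{S}^{n-1}$ at $\nu$, a one-line differentiation of $N_{\mathcal{C}}$ along generators yields $D_{e_{n}^{\mathcal{C}}}N_{\mathcal{C}}=0$, while differentiating in a horizontal direction gives $D_{e_{i}^{\mathcal{C}}}N_{\mathcal{C}}=-(\sigma|X_{\mathcal{C}}|)^{-1}e_{i}^{\mathcal{C}}$ for $i<n$, using $|X_{\mathcal{C}}|=s\sqrt{1+\sigma^{2}}$ and $\tau_{i}\perp\nu$. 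This identifies $\kappa_{n}^{\mathcal{C}}=0$ and $\kappa_{i}^{\mathcal{C}}=(\sigma|X_{\mathcal{C}}|)^{-1}$ for $i<n$.

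For $\nabla_{\mathcal{C}}A_{\mathcal{C}}$, the key observation is the identity
\[
A_{\mathcal{C}}\,=\,\frac{1}{\sigma r}\bigl(g_{\mathcal{C}}-dr\otimes dr\bigr),\qquad r=|X_{\mathcal{C}}|,
\]
which is immediate from (\ref{237}) once one checks $\partial_{r}$ corresponds to $e_{n}^{\mathcal{C}}$. The induced metric in the $(r,\omega)$ chart is $g_{\mathcal{C}}=dr^{2}+(\sigma')^{2}r^{2}g_{\mathbf{S}^{n-1}}$ with $\sigma'=\sigma/\sqrt{1+\sigma^{2}}$, from which a standard warped-product calculation gives $\nabla r=\partial_{r}$ and
\[
\nabla^{2}r\,=\,\frac{1}{r}\bigl(g_{\mathcal{C}}-dr\otimes dr\bigr).
\]
Applying the Leibniz rule to the closed-form expression for $A_{\mathcal{C}}$ and using $\nabla g_{\mathcal{C}}=0$ produces
\[
\nabla_{k}(A_{\mathcal{C}})_{ij}\,=\,-\frac{\nabla_{k}r}{\sigma r^{2}}\bigl(g_{ij}-\nabla_{i}r\nabla_{j}r\bigr)\,-\,\frac{1}{\sigma r}\bigl(\nabla^{2}_{ki}r\,\nabla_{j}r\,+\,\nabla_{i}r\,\nabla^{2}_{kj}r\bigr).
\]
Evaluating in the orthonormal frame $\{e_{i}^{\mathcal{C}}\}$, where $\nabla_{i}r=\delta_{in}$, the $k=n$ component reduces to $-\frac{1}{\sigma r^{2}}(\delta_{ij}-\delta_{in}\delta_{jn})$, yielding (\ref{238}), while all three vanishing identities in (\ref{239}) correspond to either $(i,j,k)$ all avoiding $n$ (both summands vanish) or to $(i,j,k)=(i,n,n)$ or $(n,n,n)$ (explicit cancellation). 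Total symmetry of $\nabla_{\mathcal{C}}A_{\mathcal{C}}$ (Codazzi) is automatically respected by the formula.

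The main obstacle is not computational but bookkeeping: one must be careful that $e_{n}^{\mathcal{C}}=X_{\mathcal{C}}/|X_{\mathcal{C}}|$ so that $dr=e_{n}^{\mathcal{C}\,\flat}$, and that the sign convention for $N_{\mathcal{C}}$ is chosen so the eigenvalues of $-DN_{\mathcal{C}}$ agree with the positive $\kappa_{i}^{\mathcal{C}}$ in (\ref{237}). Once this is fixed, the warped-product Hessian formula for $r$ and the algebraic expression $A_{\mathcal{C}}=(\sigma r)^{-1}(g_{\mathcal{C}}-dr\otimes dr)$ do all the work, and one can read off (\ref{238})--(\ref{239}) directly. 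As a cross-check, rotational invariance around the cone axis and the scaling $\mathcal{C}=\varrho\mathcal{C}$ force $\nabla_{\mathcal{C}}A_{\mathcal{C}}$ to be a degree $-2$ homogeneous, rotation-equivariant tensor built from $e_{n}^{\mathcal{C}}$ and the horizontal projector, which is exactly the shape of the right-hand sides of (\ref{238})--(\ref{239}).
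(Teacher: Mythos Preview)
Your proof is correct and takes a genuinely different route from the paper. The paper works directly in an orthonormal moving frame: it notes that for such a frame the Christoffel symbols satisfy $^{\mathcal{C}}\Gamma_{ki}^{j}=-\,^{\mathcal{C}}\Gamma_{kj}^{i}$, observes that this antisymmetry makes the connection terms in $\nabla_{k}^{\mathcal{C}}A_{ij}^{\mathcal{C}}$ cancel whenever $i,j\neq n$ or $i=j=n$ (so that $\nabla_{k}^{\mathcal{C}}A_{ij}^{\mathcal{C}}=D_{e_{k}^{\mathcal{C}}}(A_{ij}^{\mathcal{C}})$ in those cases), and then simply differentiates the scalar functions $A_{ij}^{\mathcal{C}}=\kappa_{i}^{\mathcal{C}}\delta_{ij}$ along $e_{n}^{\mathcal{C}}$ and along horizontal directions. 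Your approach instead packages the shape operator globally as $A_{\mathcal{C}}=(\sigma r)^{-1}(g_{\mathcal{C}}-dr\otimes dr)$, invokes the warped-product Hessian $\nabla^{2}r=r^{-1}(g_{\mathcal{C}}-dr\otimes dr)$, and differentiates covariantly via Leibniz; the three cases in (\ref{238})--(\ref{239}) then drop out of a single formula. The paper's argument is more elementary and self-contained (no warped-product facts needed), while yours is cleaner bookkeeping-wise and makes the homogeneity and symmetry manifest at every step. Both are short and equivalent in substance.
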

\begin{proof}
Let's parametrize $\mathcal{C}$ by 
\[
X_{\mathcal{C}}=\left(\sigma s\,\nu,\, s\right)\quad\textrm{for}\;\nu\in\mathcal{\mathbf{S}}^{n-1},\, s\in\mathbb{R}_{+}
\]
and take an othornomal local frame $\left\{ e_{1}^{\mathcal{C}},\cdots,\, e_{n}^{\mathcal{C}}\right\} $
of $\mathcal{C}$ so that 
\begin{equation}
e_{n}^{\mathcal{C}}\,=\,\frac{\partial_{s}X_{\mathcal{C}}}{|\partial_{s}X_{\mathcal{C}}|}\,=\,\frac{\left(\sigma\nu,\,1\right)}{\sqrt{1+\sigma^{2}}}\label{240}
\end{equation}
By a simple calculation, the principal curvatures of the cone $\mathcal{C}$ are given by 
\begin{equation}
\kappa_{1}^{\mathcal{C}}=\cdots=\kappa_{n-1}^{\mathcal{C}}=\frac{1}{\sigma s\sqrt{1+\sigma^{2}}}=\frac{1}{\sigma|X_{\mathcal{C}}|},\quad\kappa_{n}^{\mathcal{C}}=0\label{241}
\end{equation}
Since $\left\{ e_{1}^{\mathcal{C}},\cdots,\, e_{n}^{\mathcal{C}}\right\} $
forms a principal basis at each point, so by (\ref{241}) we have
\begin{equation}
A_{ii}^{\mathcal{C}}=\kappa_{i}^{\mathcal{C}}=\frac{1}{\sigma s\sqrt{1+\sigma^{2}}}=\frac{1}{\sigma|X_{\mathcal{C}}|}\quad\textrm{whenever}\: i\neq n\label{242}
\end{equation}
\[
A_{ij}^{\mathcal{C}}=0=A_{nn}^{\mathcal{C}}\quad\textrm{whenever}\: i\neq j
\]
where $A_{ij}^{\mathcal{C}}\equiv A_{\mathcal{C}}\left(e_{i}^{\mathcal{C}},\, e_{j}^{\mathcal{C}}\right)$.\\

By the orthonormality of $\left\{ e_{1}^{\mathcal{C}},\cdots,\, e_{n}^{\mathcal{C}}\right\} $
and the product rule, the Christoffel symbols $\mathcal{^{C}}\Gamma_{ij}^{k}\equiv\left(D_{e_{i}^{\mathcal{C}}}\, e_{j}^{\mathcal{C}}\right)\cdot e_{k}^{\mathcal{C}}$
satisfy
\begin{equation}
^{\mathcal{C}}\Gamma_{ki}^{j}=\left(D_{e_{k}^{\mathcal{C}}}\, e_{i}^{\mathcal{C}}\right)\cdot e_{j}^{\mathcal{C}}=-\left(D_{e_{k}^{\mathcal{C}}}\, e_{j}^{\mathcal{C}}\right)\cdot e_{i}^{\mathcal{C}}=-^{\mathcal{C}}\Gamma_{kj}^{i}\label{243}
\end{equation}
Thus, from (\ref{242}) and (\ref{243}), we deduce that whenever
$i,\, j\neq n$ or $i=j=n$, there holds
\begin{equation}
\nabla_{k}^{\mathcal{C}}A_{ij}^{\mathcal{C}}=D_{e_{k}^{\mathcal{C}}}\left(A_{ij}^{\mathcal{C}}\right)\,-\,{}^{\mathcal{C}}\Gamma_{ki}^{j}A_{jj}^{\mathcal{C}}\,-\,{}^{\mathcal{C}}\Gamma_{kj}^{i}A_{ii}^{\mathcal{C}}=D_{e_{k}^{\mathcal{C}}}\left(A_{ij}^{\mathcal{C}}\right)\label{244}
\end{equation}

By (\ref{244}), (\ref{242}) and (\ref{240}), we get 
\[
\nabla_{n}^{\mathcal{C}}A_{ij}^{\mathcal{C}}=D_{e_{n}^{\mathcal{C}}}\left(\kappa_{i}^{\mathcal{C}}\delta_{ij}\right)=\frac{1}{\sqrt{1+\sigma^{2}}}\partial_{s}\left(\frac{1}{\sigma s\sqrt{1+\sigma^{2}}}\right)\,\delta_{ij}
\]
\[
=\frac{-1}{\sigma\left(1+\sigma^{2}\right)s^{2}}\delta_{ij}=\frac{-1}{\sigma|X_{\mathcal{C}}|{}^{2}}\delta_{ij}\quad\textrm{if}\; i,\, j\neq n
\]
which verifies (\ref{238}).

By (\ref{244}), (\ref{242}) and noting that $|X_{\mathcal{C}}|$
is invariant along $e_{k}^{\mathcal{C}}$ for $k\neq n$ , we get
\begin{equation}
\nabla_{k}^{\mathcal{C}}A_{ij}^{\mathcal{C}}=D_{e_{k}^{\mathcal{C}}}\left(\kappa_{i}^{\mathcal{C}}\delta_{ij}\right)=D_{e_{k}^{\mathcal{C}}}\left(\frac{1}{\sigma|X_{\mathcal{C}}|}\right)\,\delta_{ij}=0\quad\textrm{if}\; i,\, j,\, k\neq n\label{245}
\end{equation}
From (\ref{244}) and (\ref{242}), we have 
\begin{equation}
\nabla_{i}^{\mathcal{C}}A_{nn}^{\mathcal{C}}=D_{e_{i}^{\mathcal{C}}}\left(A_{nn}^{\mathcal{C}}\right)=0\quad\forall\, i\label{246}
\end{equation}
Then (\ref{239}) follows from (\ref{245}) and (\ref{246}).
\end{proof}

Combining (\ref{1}), (\ref{2}), (\ref{3}) with Lemma  \ref{l34}, we get
the following:
\begin{prop}\label{p35}
The constant $\varkappa$ defined in (\ref{5}) can be estimated by
\begin{equation}
\varkappa\,\leq\, C\left(n\right)\left(\Big|\partial^{2}f\left(\overrightarrow{1},\,0\right)\Big|\,+\,\Big|\partial_{1}f\left(\overrightarrow{1},\,0\right)-\partial_{n}f\left(\overrightarrow{1},\,0\right)\Big|\right)\label{247}
\end{equation}
Note that here we assume that $\mathcal{C}$ is rotationally symmetric.
\end{prop}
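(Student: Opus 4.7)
The plan is to compute the contracted second-derivative tensor
\[
\sum_{k,l}\frac{\partial^{2}F}{\partial S_{i}^{j}\partial S_{k}^{l}}(A_{\mathcal{C}}^{\#})\,(\nabla_{\mathcal{C}}A_{\mathcal{C}}^{\#})_{k}^{l}
\]
explicitly in the principal orthonormal frame $\{e_{1}^{\mathcal{C}},\dots,e_{n}^{\mathcal{C}}\}$ of Lemma \ref{l34}, where $A_{\mathcal{C}}^{\#}=\textrm{diag}(\kappa,\dots,\kappa,0)$ with $\kappa=(\sigma|X_{\mathcal{C}}|)^{-1}$. In this frame the second fundamental form is diagonal, so formulas (\ref{1})--(\ref{3}) apply directly, and by Codazzi together with (\ref{238})--(\ref{239}) the only nonzero components of $B_{rkl}:=\nabla_{r}^{\mathcal{C}}A_{kl}^{\mathcal{C}}$ are the totally symmetric ones with exactly one $n$-index and the remaining two indices equal and less than $n$, each equal to $-1/(\sigma|X_{\mathcal{C}}|^{2})$.

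Next I would split the contraction according to the direction $r$ of differentiation. When $r\neq n$, the only surviving $B_{rkl}$ have $\{k,l\}=\{r,n\}$, so only the off-diagonal piece $\frac{\partial_{i}f-\partial_{k}f}{\lambda_{i}-\lambda_{k}}\delta_{il}\delta_{kj}$ of (\ref{3}) contributes; evaluating at $\lambda=(\kappa,\dots,\kappa,0)$ this yields nonzero entries only for $(i,j)\in\{(r,n),(n,r)\}$, each equal (up to sign) to
\[
\frac{1}{\sigma|X_{\mathcal{C}}|^{2}}\cdot\frac{\partial_{1}f(\vec{1},0)-\partial_{n}f(\vec{1},0)}{\kappa}=\frac{\partial_{1}f(\vec{1},0)-\partial_{n}f(\vec{1},0)}{|X_{\mathcal{C}}|},
\]
where I have used the symmetry of $f$ in its first $n-1$ slots and the degree-$0$ homogeneity of $\partial f$. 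When $r=n$, only terms with $k=l\neq n$ survive, and (\ref{2})--(\ref{3}) combine to give a diagonal contribution $-\delta_{ij}\,(\sigma|X_{\mathcal{C}}|^{2})^{-1}\sum_{k\neq n}\partial_{ik}^{2}f(\kappa\vec{1},0)$; by the degree-$(-1)$ homogeneity of $\partial^{2}f$ this becomes $-\delta_{ij}\,|X_{\mathcal{C}}|^{-1}\sum_{k\neq n}\partial_{ik}^{2}f(\vec{1},0)$.

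Collecting the two cases, the pointwise norm of the tensor in (\ref{5}) at $X_{\mathcal{C}}$ is bounded by
\[
C(n)\,|X_{\mathcal{C}}|^{-1}\bigl(|\partial^{2}f(\vec{1},0)|+|\partial_{1}f(\vec{1},0)-\partial_{n}f(\vec{1},0)|\bigr).
\]
Taking the supremum over $X_{\mathcal{C}}\in\mathcal{C}\cap(B_{3}\setminus\bar{B}_{1/3})$ and using $|X_{\mathcal{C}}|\geq\tfrac{1}{3}$ yields (\ref{247}).

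I expect the only real obstacle to be the index bookkeeping in the off-diagonal case $r\neq n$: one must carefully verify that the symmetric formula (\ref{3}) produces the factor $(\lambda_{r}-\lambda_{n})^{-1}=\kappa^{-1}$, which exactly cancels the $\kappa$ coming from the degree-$(-1)$ homogeneity of $\partial f$ used via the difference quotient, leaving a quantity depending only on $\partial f(\vec{1},0)$ and $|X_{\mathcal{C}}|^{-1}$. Once this cancellation is correctly tracked, both terms in the stated bound (\ref{247}) arise naturally, and no further analytic input beyond Lemma \ref{l34} and the homogeneity of $F$ is required.
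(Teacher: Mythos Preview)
Your proposal is correct and follows essentially the same route as the paper: both arguments work in the principal orthonormal frame of Lemma~\ref{l34}, invoke formulas (\ref{2})--(\ref{3}) at the diagonal matrix $A_{\mathcal{C}}^{\#}=\mathrm{diag}(\kappa,\dots,\kappa,0)$, and use the homogeneity of $f$ and its derivatives to extract the factor $|X_{\mathcal{C}}|^{-1}$. The only difference is one of granularity: the paper bounds $\bigl|\partial^{2}F/\partial S\partial S\bigr|$ and $|\nabla_{\mathcal{C}}A_{\mathcal{C}}^{\#}|$ separately and multiplies, whereas you compute the contraction $\sum_{k,l}\frac{\partial^{2}F}{\partial S_{i}^{j}\partial S_{k}^{l}}(\nabla_{r}A)_{k}^{l}$ explicitly case by case in $r$; both yield the same bound, and your index bookkeeping (including the $\kappa^{-1}$ cancellation in the off-diagonal case) is correct.
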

\begin{proof}
At each point $X_{\mathcal{C}}\in\mathcal{C}$, take an orthonormal
basis $\left\{ e_{1}^{\mathcal{C}},\cdots,\, e_{n}^{\mathcal{C}}\right\} $
for $T_{X_{\mathcal{C}}}\mathcal{C}$ so that $e_{n}^{\mathcal{C}}=\frac{\left(\sigma\nu,\,1\right)}{\sqrt{1+\sigma^{2}}}$.
Then by (\ref{2}), (\ref{3}), Lemma  \ref{l34} and the homogeneity of the
derivatives of $f$, we get 
\[
\Big|\frac{\partial^{2}F}{\partial S_{i}^{j}\partial S_{k}^{l}}\left(A_{\mathcal{C}}^{\#}\right)\Big|\,\leq\,\left(\Big|\partial^{2}f\left(\kappa_{1}^{\mathcal{C}},\cdots,\,\kappa_{n}^{\mathcal{C}}\right)\Big|\,+\,\Big|\frac{\partial_{1}f\left(\kappa_{1}^{\mathcal{C}},\cdots,\,\kappa_{n}^{\mathcal{C}}\right)-\partial_{n}f\left(\kappa_{1}^{\mathcal{C}},\cdots,\,\kappa_{n}^{\mathcal{C}}\right)}{\kappa_{1}^{\mathcal{C}}-\kappa_{n}^{\mathcal{C}}}\Big|\right)
\]
\[
=\frac{1}{\kappa_{1}^{\mathcal{C}}}\left(\Big|\partial^{2}f\left(\overrightarrow{1},\,0\right)\Big|\,+\,\Big|\partial_{1}f\left(\overrightarrow{1},\,0\right)-\partial_{k}f\left(\overrightarrow{1},\,0\right)\Big|\right)
\]
which implies that
\[
|X_{\mathcal{C}}|\Big|\sum_{k,\, l}\frac{\partial^{2}F}{\partial S_{i}^{j}\partial S_{k}^{l}}\left(A_{\mathcal{C}}^{\#}\right)\,\left(\nabla_{\mathcal{C}}A_{\mathcal{C}}^{\#}\right)_{k}^{l}\Big|
\]
\[
\leq|X_{\mathcal{C}}|\,\frac{C\left(n\right)}{\kappa_{1}^{\mathcal{C}}}\left(\Big|\partial^{2}f\left(\overrightarrow{1},\,0\right)\Big|\,+\,\Big|\partial_{1}f\left(\overrightarrow{1},\,0\right)-\partial_{k}f\left(\overrightarrow{1},\,0\right)\Big|\right)\,\frac{\kappa_{1}^{\mathcal{C}}}{|X_{\mathcal{C}}|}
\]
\[
=C\left(n\right)\left(\Big|\partial^{2}f\left(\overrightarrow{1},\,0\right)\Big|\,+\,\Big|\partial_{1}f\left(\overrightarrow{1},\,0\right)-\partial_{k}f\left(\overrightarrow{1},\,0\right)\Big|\right)
\]
Therefore,
\[
\varkappa=\sup_{X_{\mathcal{C}}\in\mathcal{C}\cap\left(B_{3}\setminus\bar{B}_{\frac{1}{3}}\right)}\Big|\sum_{k,\, l}\frac{\partial^{2}F}{\partial S_{i}^{j}\partial S_{k}^{l}}\left(A_{\mathcal{C}}^{\#}\right)\,\left(\nabla_{\mathcal{C}}A_{\mathcal{C}}^{\#}\right)_{k}^{l}\Big|
\]
\[
\leq C\left(n\right)\left(\Big|\partial^{2}f\left(\overrightarrow{1},\,0\right)\Big|\,+\,\Big|\partial_{1}f\left(\overrightarrow{1},\,0\right)-\partial_{k}f\left(\overrightarrow{1},\,0\right)\Big|\right).
\]
\end{proof}

\vspace{0.5in}

\vspace{0.3in}
\email{
\noindent Department of Mathematics, Rutgers University - Hill Center for the Mathematical Sciences 
110 Frelinghuysen Rd., Piscataway, NJ 08854-8019\\\\
E-mail address: \textsf{showhow@math.rutgers.edu}
}
\end{document}